\documentclass{amsart}

\usepackage[hmarginratio=1:1]{geometry}

\usepackage{amsmath}
\usepackage{amssymb}
\usepackage{amsthm}
\usepackage{esint}

\usepackage{multirow}

\usepackage{datatool}
\usepackage{booktabs}

\usepackage{capt-of}

\usepackage{graphicx}
\usepackage{float}

\usepackage{color}

\usepackage{hyperref}

\usepackage[backend=biber, style=alphabetic, sorting=nyt, url=false, isbn=false, giveninits=true]{biblatex}

\addbibresource{literature.bib}


\def\del{\partial}


\def\real{\mathbb R}

\def\A{\mathcal A}
\def\G{\mathcal G}
\def\I{\mathcal I}
\def\J{\mathcal J}
\def\M{\mathcal M}
\def\N{\mathcal N}
\def\P{\mathcal P}
\def\R{\mathcal R}
\def\S{\mathcal S}
\def\T{\mathcal T}

\def\leer{\hspace{1 cm}}
\def\kurz{\hspace{0.5 cm}}

\newcommand{\Rom}[1]{\textup{\uppercase\expandafter{\romannumeral #1\relax}}}

\theoremstyle{plain}
\newtheorem{definition}{\begin{large}Definition\end{large}}[section]
\newtheorem{theorem}[definition]{\begin{large}Theorem\end{large}}
\newtheorem{remark}[definition]{\begin{large}Remark\end{large}}
\newtheorem{lemma}[definition]{\begin{large}Lemma\end{large}}

\newtheorem{proposition}[definition]{\begin{large}Proposition\end{large}}
\newtheorem{corollary}[definition]{\begin{large}Corollary\end{large}}
\newtheorem{algorithm}[definition]{\begin{large}Algorithm\end{large}}

\theoremstyle{definition}
\newtheorem{example}[definition]{\begin{large}Example\end{large}}
\newtheorem*{acknowledgement}{\begin{large}Acknowledgement\end{large}}

\theoremstyle{remark}

\allowdisplaybreaks

\title[error estimates for the approximation of elastic curves]{Quasi-optimal error estimates for the approximation of stable stationary states
        of the elastic energy of inextensible curves}
        
\author{Sören Bartels}
\address{Abteilung für angewandte Mathematik, Albert-Ludwigs-Universität Freiburg, Hermann-Herder-Str.~10, 79104 Freiburg i.~Br., Germany}
\email{bartels@mathematik.uni-freiburg.de}

\author{Balázs Kovács}
\address{Institute of Mathematics, Paderborn University, Warburgerstr. 100., 33098, Paderborn, Germany}
\email{balazs.kovacs@math.uni-paderborn.de}

\author{Dominik Schneider}
\address{Abteilung für angewandte Mathematik, Albert-Ludwigs-Universität Freiburg, Hermann-Herder-Str.~10, 79104 Freiburg i.~Br., Germany}
\email{dominik.schneider@mathematik.uni-freiburg.de}

\date{\today}

\subjclass[2020]{74B20 65M15 35K55}

\begin{document}
\begin{abstract}
  We establish local existence and a quasi-optimal error estimate for piecewise cubic minimizers to the bending energy under a discretized inextensibility constraint. In previous research a discretization is used where the inextensibility constraint is only enforced at the nodes of the discretization. We show why this discretization leads to suboptimal convergence rates and we improve on it by also enforcing the constraint in the midpoints of each subinterval. We then use the inverse function theorem to prove existence and an error estimate for stationary states of the bending energy that yields quasi-optimal convergence. We use numerical simulations to verify the theoretical results experimentally.
\end{abstract}

\keywords{nonlinear bending, finite elements, error analysis}
  
\maketitle

\section{Introduction}\label{section:Introduction}

We consider an arc-length parameterized curve $u: I \to \real^d$ with
$I = (a,b) \subset \real$. Since the curvature $\kappa: I \to \real$ of $u$
is given by $\kappa = \vert u'' \vert$, the bending energy $E(u)$ is
given by
\begin{equation*}
  E(u) = \frac{1}{2} \int_I \vert u'' \vert^2 \ dx.
\end{equation*}
Our goal is to find minimizing functions to this bending energy functional under the inextensibility constraint $\vert u' \vert^2 = 1$ and given boundary conditions $u(a) = u_D(a)$, $u' = u_D'$ on $\del I$. We note that other boundary conditions either yield trivial solutions, i.e. straight lines, or lead to certain consistency terms that we aim to avoid in the error analysis. From the first variation of the energy functional we obtain the Euler-Lagrange equation
\begin{equation*}
  0 = \int_I u'' \cdot v'' \ dx
\end{equation*}
for all tangential fields $v$ satisfying homogeneous boundary conditions and the linearized inextensibility constraint $u' \cdot v' = 0$.
This problem can be discretized using piecewise cubic $C^1$ splines for the approximation of the curve, enforcing the inextensibility constraint only at the nodes of the decomposition of $I$. Using the nodal $\P_1$-interpolant $\I_{h,1}$, we can write the discretized constraint as $\I_{h,1}\vert u_h' \vert^2 = 1$. In \cite{Bartels13} this discretization has been used to define a numerical scheme that approximates these discrete solutions using a discretization of the $L^2$ gradient flow of the bending energy $E$. Numerical simulations show that the discrete solutions obtained with this scheme converge linearly towards the continuous solution in $H^2(I)^d$, which is suboptimal since the interpolation error is of quadratic order.
By using the stronger constraint $\I_{h,2} \vert u_h' \vert^2 = 1$ instead, we will be able to prove quasi-optimal quadratic approximation of regular energy stable solutions, which quantitatively complements general convergence theory.
We will also perform numerical experiments to experimentally verify the improvement in convergence rate.

Similar problems have been studied by several authors. \cite{DD09} deals with the approximation of the elastic flow of parameterized curves. \cite{BGN10}  addresses  the case of closed curves and in \cite{DKS02, BGN08, BGN12, DLP14, DLP17, DN24} the numerical approximation of the $L^2$ gradient flow of the bending energy for curves with fixed length has been studied. Further solutions to the above minimization problem are closely related to harmonic maps into the unit sphere, i.e. minimizers of the Dirichlet energy. Noteworthy papers regarding error estimates of harmonic maps are \cite{HXW09}  following the ideas of \cite{BRR82},  where the minimization problem is reformulated as a saddle point problem using Lagrange multipliers and a preconditioned iterative scheme to solve the problem is proposed, as well as \cite{BPW23}, where this saddle point approach and the inverse function theorem are used to derive a quasi optimal error estimate for the finite element approximation of harmonic maps.
Finally we also want to refer to \cite{BDS25-pre}, where we derived a quasi-optimal error estimate for the elastic flow of inextensible curves and a numerical scheme which will also be used for the numerical experiments in  Section  $\ref{section:numerical_approximation}$. 

A notable difference between this paper and \cite{BDS25-pre} is that by reformulating the minimization problem as a saddle point problem and using the inverse function theorem, we not only obtain an error estimate for the minimizing function $u_h$ but for the discrete Lagrange multiplier $\lambda_h$ as well.

This paper has the following structure. In Section \ref{subsection:notation} we introduce some basic notation and assumptions that we use throughout this paper. In Section \ref{section:P1_solutions} we investigate the suboptimal convergence rate of the $\P_1$ discretized constraint. In Section \ref{section:stationary_case} we reformulate the problem as a saddle point problem and use the inverse function theorem stated in Appendix \ref{appendix:inverse Function} to prove local existence of discrete solutions as well as an error estimate for approximating energy stable regular solutions. In Section \ref{section:numerical_approximation} we introduce a discrete scheme to numerically compute those discrete solutions and verify the error estimate experimentally.

\subsection{Notation}
\label{subsection:notation}
The following notation will be used throughout this paper.
Let $I = \bigcup_{i = 1}^M [x_{i-1}, x_i]$ a decomposition
of an interval $I = (a,b) \subset \real$ with $a = x_0 < x_1 < ... 
< x_M = b$. We set $I_i := [x_{i-1}, x_i]$, $h_i = x_i - x_{i-1}$, 
$h = \max_i h_i$ and $\T_h = \{I_i \ \vert \ i = 1,...,M\}$. 
We will further assume that there exists $c > 0$ independent of $h$ such that $h \leq c h_i$ for all $i = 1,...,M$. 
We then define the finite element spaces
\begin{align*}
  \S^{k,l}(\T_h) := \{v_h \in C^l(I) : v_h\vert_J \in \P_k \text{ for all } J \in \T_h \} \subset H^{l+1}(I).
\end{align*}
To deal with boundary values, we
also define the Sobolev spaces with vanishing boundary values
\begin{gather*}
  H^2_D(I) := \{ v \in H^2(I) : v(a) = 0,\ 
  v'\vert_{\del I} = 0\}, \\
  H^1_D(I) := \{ v \in H^1(I) : v(a) = 0 \}, \leer
  H^1_0(I) := \{ v \in H^1(I) : v\vert_{\del I} = 0 \}.
\end{gather*}
We further set $H^{-1}(I) := H^1_0(I)'$ the dual space of $H^1_0(I)$.
Analogously,  for $l \in \{0,1\}$  we define finite element spaces with vanishing boundary values as
\begin{align*}
  \S^{k,l}_D(\T_h) := \S^{k,l}(\T_h) \cap H^{l+1}_D(I), \leer
  \S^{k,0}_0(\T_h) := \S^{k,0}(\T_h) \cap H^1_0(I).
\end{align*}
We write $(\cdot , \cdot)$ and $\Vert \cdot \Vert$ for the  $L^2(I)$-product 
and -norm and $D_h u$ for the elementwise weak derivative of a function $u$.
Also for $i = 1,...,M$ we set $m_i := (x_{i-1} + x_i)/2$ the midpoint
of the interval $I_i$, $\M(\T_h) := \{m_i : i = 1,...,m\}$ and define the sets of associated nodes for $\S^{1,0}(\T_h)$ and $\S^{2,0}(\T_h)$ as
\begin{align*}
  \N_1(\T_h) := \{x_i : i = 0,...,M\}, \leer
  \N_2(\T_h) := \N_1(\T_h) \cup \M(\T_h).
\end{align*}
We then define the cubic $C^1$ interpolant
$\I_{h,3}: C^1(I)^d \to \S^{3,1}(\T_h)^d$ and the continuous quadratic and 
linear interpolants
$\I_{h,2}: C^0(I)^d \to \S^{2,0}(\T_h)^d$,
$\I_{h,1}: C^0(I)^d \to \S^{1,0}(\T_h)^d$ via the identities
\begin{gather*}
  \I_{h,3}v(z) = v(z),\ (\I_{h,3}v)'(z) = v'(z) \kurz \forall z \in \N_1(\T_h) , \\
  \I_{h,2}v(z) = v(z) \kurz \forall z \in \N_2(\T_h), \leer
  \I_{h,1}v(z) = v(z) \kurz \forall z \in \N_1(\T_h).
\end{gather*}
Additionally we define the linear and quadratic interpolants with vanishing boundary conditions $\I_{h,k,0}: C^0(I)^d \to \S^{k,0}_0(\T_h)^d$ via
\begin{gather*}
  \I_{h,k,0}v(z) = 0 \ \forall z \in \del I,\kurz
  \I_{h,k,0}v(z) = v(z)\ \forall z \in \N_k(\T_h) \setminus \del I
\end{gather*}
for $k \in \{1,2\}$.
Further we introduce another interpolant $\J_{h,3}: C^1(I)^d \to \S^{3,1}(\T_h)^d$ defined via
\begin{equation*}
  \J_{h,3}v(x) = v(a) + \int_a^x \I_{h,2}v' \ d\sigma.
\end{equation*}
We note that, according to
Lemma \ref{lemma:interpolation_estimate}, $\J_{h,3}$
satisfies mostly the same interpolation estimate as $\I_{h,3}$, but preserves derivatives at the subinterval midpoints $m_i$.
Based on these interpolants we also define two lumped $L^2$-products on
$C^0(I)^d$ as
\begin{align*}
  (u,v)_{h,1} = \int_I \I_{h,1}(u \cdot v) \ dx, \leer
  (u,v)_{h,2} = \int_I \I_{h,2}(u \cdot v) \ dx ,
\end{align*}
as well as the corresponding lumped norms
$$
\Vert u \Vert_{h,1} = \sqrt{(u,u)_{h,1}}, \leer
\Vert u \Vert_{h,2} = \sqrt{(u,u)_{h,2}}.
$$

\section{Discrete energy minimization}
\label{section:P1_solutions}
In this section we take a closer look at the solutions to the discrete minimization problem to find out why the approximation error converges only with linear rate. We first recall the continuous minimization problem:
\begin{equation}  
  \begin{aligned}
  \text{Find $u \in u_D + H^2_D(I)^d$ that minimizes: }
  \label{equation:bending_energy_minimization}
  E(u) = \frac{1}{2} \int_I \vert u'' \vert^2 \ dx
  \text{\kurz subject to $\vert u' \vert^2 = 1$.}
  \end{aligned}
\end{equation}
It is easy to see that this minimization problem is closely related to the minimization of the Dirichlet energy:
\begin{equation}  
  \label{equation:dirichlet_energy_minimization}
  \text{Find $\widetilde u \in \widetilde u_D + H^1_0(I)^d$ that minimizes: }
  \widetilde E(v) = \frac{1}{2} \int_I \vert v' \vert^2 \ dx
  \text{\kurz subject to $\vert \widetilde u \vert^2 = 1$.}
\end{equation}
A function $u \in u_D + H^2_D(I)^d$ is a solution to the minimization problem (\ref{equation:bending_energy_minimization}) if and only if $u(a) = u_D(a)$ and $\widetilde u = u'$ is a solution to the minimization problem (\ref{equation:dirichlet_energy_minimization}) with boundary conditions $\widetilde u_D = u_D'$.
An analogous result holds for the discrete case. The following proposition shows that piecewise affine functions minimize the Dirichlet energy.

\begin{proposition}\label{prop:harmonic_maps_energy}
  For all $u \in H^1(I)^d$ we have
  \begin{equation*}
    \int_I \vert (\I_{h,1} u)' \vert^2 \ dx
    \leq \int_I \vert u' \vert^2 \ dx ,
  \end{equation*}
  with equality, if and only if $u \in \S^{1,0}(\T_h)^d$. 
  Especially if $u \in \S^{2,0}(\T_h)^d$
  is a minimizer of the Dirichlet energy under the discrete constraint
  $\I_{h,1}(\vert u \vert^2 - 1) = 0$, then $u \in \S^{1,0}(\T_h)^d$.
\end{proposition}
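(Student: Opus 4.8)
The plan is to isolate the estimate as an elementwise convexity (Cauchy–Schwarz) inequality and then exploit its equality case together with the optimality of $u$. On a single element $I_i=[x_{i-1},x_i]$ the interpolant $\I_{h,1}u$ is affine, so, using the embedding $H^1(I)\embeds C^0(\overline I)$, its derivative is the constant
\begin{equation*}
  (\I_{h,1}u)'\big|_{I_i} \;=\; \frac{u(x_i)-u(x_{i-1})}{h_i} \;=\; \frac{1}{h_i}\int_{I_i} u'\ dx .
\end{equation*}
Applying the Cauchy–Schwarz inequality componentwise gives $\big|\int_{I_i} u'\ dx\big|^2 \le h_i\int_{I_i}|u'|^2\ dx$, hence $\int_{I_i}|(\I_{h,1}u)'|^2\ dx \le \int_{I_i}|u'|^2\ dx$; summing over $i=1,\dots,M$ yields the asserted inequality. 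Equality in Cauchy–Schwarz on every element and in every component means that $u'$ is constant (a.e.) on each $I_i$, which together with the continuity of $u$ is exactly the statement $u\in\S^{1,0}(\T_h)^d$; the converse implication is trivial, since then $\I_{h,1}u=u$.

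For the last assertion let $u\in\S^{2,0}(\T_h)^d$ be a minimizer of $\widetilde E$ under the constraint $\I_{h,1}(|u|^2-1)=0$. Since a continuous piecewise affine function vanishes precisely when all its nodal values do, this constraint is equivalent to $|u(z)|^2=1$ for every $z\in\N_1(\T_h)$. I would then test minimality against the competitor $v:=\I_{h,1}u\in\S^{1,0}(\T_h)^d\subset\S^{2,0}(\T_h)^d$. Because $v$ reproduces the nodal values of $u$ on $\N_1(\T_h)$, it satisfies the same (nodal) boundary conditions and the same discrete constraint $\I_{h,1}(|v|^2-1)=0$, hence it is admissible. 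By the inequality just proved, $\widetilde E(v)\le\widetilde E(u)$, so minimality forces equality, and the equality characterization above yields $u=v\in\S^{1,0}(\T_h)^d$.

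There is no serious obstacle here; the only point needing care is the admissibility of the competitor $v=\I_{h,1}u$ in the second step — that it lies in the discrete admissible set, i.e. in $\S^{2,0}(\T_h)^d$, satisfies the prescribed boundary conditions, and fulfils the discretized inextensibility constraint — all of which follow at once from the interpolation identity $\I_{h,1}u(z)=u(z)$ for $z\in\N_1(\T_h)$. The rest is the one-line elementwise Cauchy–Schwarz estimate and its equality case.
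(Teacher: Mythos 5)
Your argument is correct and reaches the same conclusion by a somewhat different route. The paper expands
\begin{equation*}
\int_I \vert u'\vert^2\,dx = \int_I \vert(\I_{h,1}u)'\vert^2\,dx + \int_I \vert(u-\I_{h,1}u)'\vert^2\,dx + 2\int_I (\I_{h,1}u)'\cdot(u-\I_{h,1}u)'\,dx
\end{equation*}
and kills the cross term by elementwise integration by parts, using that $(\I_{h,1}u)'$ is piecewise constant and that $u-\I_{h,1}u$ vanishes at every node; the inequality and its equality case $u=\I_{h,1}u$ then fall out simultaneously from $\Vert(u-\I_{h,1}u)'\Vert^2\ge 0$. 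You instead recognize $(\I_{h,1}u)'\vert_{I_i}$ as the mean of $u'$ over $I_i$ and apply Cauchy--Schwarz (Jensen's inequality for the square) elementwise and componentwise, then sum. Both encode the same underlying fact, namely that elementwise averaging is the $L^2$-orthogonal projection of $u'$ onto piecewise constants and hence non-expansive, but the paper's Pythagorean form handles the equality case with slightly less bookkeeping (no need to trace equality in Cauchy--Schwarz over every element and every component), while yours makes the averaging operator and the Jensen structure explicit. Your treatment of the final assertion, testing against the competitor $v=\I_{h,1}u$ and verifying via the nodal identity $\I_{h,1}u(z)=u(z)$ for $z\in\N_1(\T_h)$ that $v$ satisfies the same discrete constraint and boundary data, spells out precisely the admissibility step that the paper leaves implicit.
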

\begin{proof}
  For all $u \in H^1(I)^d$ we have
  \begin{align*}
    \int_I \vert u' \vert^2 \ dx
    &= \int_I \vert (\I_{h,1} u)' \vert^2 \ dx
    + \int_I \vert (u - \I_{h,1} u)' \vert^2 \ dx
    + 2 \int_I (\I_{h,1} u)' \cdot (u - \I_{h,1} u)' \ dx .
  \end{align*}
  Integrating by parts yields that the last term is zero, which proves the estimate with equality if and only if $\Vert (u - \I_{h,1} u)' \Vert_{L^2(I)^d}^2 = 0$, i.e. $u = \I_{h,1}u$.
\end{proof}

As we have explained above, a function $u$ minimizes the bending energy if and only if it satisfies the required boundary conditions and $u'$ minimizes the Dirichlet energy.
Therefore, applying Proposition \ref{prop:harmonic_maps_energy} to $u'$ yields the following
result:

\begin{corollary}\label{corollary:P1_constraint_minimizer}
  For all $u \in H^2(I)^d$, with 
  $$ \J_{h,2} u := u(a) + \int_a^x \I_{h,1} u' \ dx \in \S^{2,0}(\T_h)^d $$ 
  and Proposition \ref{prop:harmonic_maps_energy} we obtain
  \begin{align*}
    \int_I \vert (\J_{h,2} u)'' \vert^2 \ dx
    = \int_I \vert (\I_{h,1} u')' \vert^2 \ dx
    \leq \int_I \vert u'' \vert^2 \ dx.
  \end{align*}
  Therefore minimizers in $H^2(I)^d$ to the bending energy subject to the discrete inextensibility constraint $\I_{h,1}(\vert u' \vert^2 - 1) = 0$ belong to $\S^{2,0}(\T_h)^d$.
\end{corollary}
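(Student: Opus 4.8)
The plan is to derive both assertions of the corollary directly from Proposition \ref{prop:harmonic_maps_energy}, applied with the tangent field $u'$ in place of $u$.

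First I would record that $\J_{h,2}u$ is an admissible competitor of the correct regularity. Since $\I_{h,1}u' \in \S^{1,0}(\T_h)^d$ is globally continuous and affine on each $I_i$, its antiderivative $x \mapsto u(a) + \int_a^x \I_{h,1}u'$ is globally $C^1$ and quadratic on each $I_i$, so $\J_{h,2}u \in \S^{2,1}(\T_h)^d \subset \S^{2,0}(\T_h)^d$. By the fundamental theorem of calculus $(\J_{h,2}u)' = \I_{h,1}u'$, hence $(\J_{h,2}u)'' = (\I_{h,1}u')'$ elementwise and $\int_I |(\J_{h,2}u)''|^2 = \int_I |(\I_{h,1}u')'|^2$. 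Applying Proposition \ref{prop:harmonic_maps_energy} to $u' \in H^1(I)^d$ and using $(u')' = u''$ gives $\int_I |(\I_{h,1}u')'|^2 \le \int_I |u''|^2$, which is the asserted chain of (in)equalities.

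For the second statement, let $u \in H^2(I)^d$ minimize the bending energy among functions satisfying the prescribed boundary data and the nodal constraint $\I_{h,1}(|u'|^2-1)=0$. The one point that needs a word is that $\J_{h,2}u$ lies in the same admissible class: because $\I_{h,1}u'$ reproduces $u'$ at every node $x_i \in \N_1(\T_h)$ we have $(\J_{h,2}u)'(x_i) = u'(x_i)$ for all $i$, so $|(\J_{h,2}u)'(x_i)|^2 = |u'(x_i)|^2$, which forces $\I_{h,1}(|(\J_{h,2}u)'|^2-1)=0$ and, since $a,b$ are nodes and $\J_{h,2}u(a)=u(a)$, the same boundary conditions as $u$. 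Thus $E(\J_{h,2}u)\le E(u)$ by the first part, and minimality of $u$ turns this into equality, i.e. $\int_I |(\I_{h,1}u')'|^2 = \int_I |u''|^2$. The equality case of Proposition \ref{prop:harmonic_maps_energy} then yields $u' \in \S^{1,0}(\T_h)^d$, whence $u \in \S^{2,1}(\T_h)^d \subset \S^{2,0}(\T_h)^d$.

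I do not expect a genuine obstacle: the argument is a single-competitor energy comparison, with all the content carried by Proposition \ref{prop:harmonic_maps_energy}. The only subtlety worth stating explicitly is the admissibility of $\J_{h,2}u$, and this subtlety is really the moral of the section: a constraint imposed only at the nodes is insensitive to the replacement $u' \mapsto \I_{h,1}u'$, so it cannot prevent the minimizer from collapsing into the quadratic spline space — which is exactly what makes the $\P_1$ discretization too weak.
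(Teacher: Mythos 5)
Your argument is correct and follows the route the paper intends: the corollary is stated without a separate proof precisely because it is Proposition~\ref{prop:harmonic_maps_energy} applied to $u'$, with $(\J_{h,2}u)'=\I_{h,1}u'$ by the fundamental theorem of calculus. You also make explicit the admissibility checks (that $\J_{h,2}u$ satisfies the same nodal constraint and the boundary data at $u(a),u'(a),u'(b)$) which the paper leaves implicit, and your invocation of the equality case of the proposition to force $u'\in\S^{1,0}(\T_h)^d$ is exactly the intended closing step.
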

This result also yields an explanation, why solutions to the discrete bending problem in general only converge linearly and not
quadratically, as for quadratic splines linear convergence in $H^2(I)^d$ is 
already optimal.

\section{Local  error analysis}\label{section:stationary_case}
To obtain optimal convergence we use a technique based on a version of the inverse function theorem  as in  \cite{BPW23} to derive
quasi-optimal error estimates for the approximation of harmonic maps.
For this a suitable reformulation is needed: Find $u \in u_D + H^2_D(I)^d =: \A_1$ that satisfies the inextensibility constraint $\vert u' \vert^2 = 1$ and the Euler-Lagrange equation
\begin{equation}
  \label{equation:bending_problem_pde}
  0 = \int_I u'' \cdot v'' \ dx
\end{equation}
for all $v \in \G(u) := \{v \in H^2_D(I)^d : u' \cdot v' = 0\}$.
Now let $v \in H^2_D(I)^d$  be  arbitrary. We then have that
$w := v - \int_a^x (u' \cdot v') u' \ d\sigma \in \G(u)$ is a suitable test
function for (\ref{equation:bending_problem_pde}) and testing with $w$ yields
\begin{align*}
  0 = \int_I u'' \cdot w'' \ dx
  = \int_I u'' \cdot v'' - \vert u'' \vert^2 u' \cdot v' \ dx
  = \int_I u'' \cdot v'' + \lambda u' \cdot v' \ dx
\end{align*}
with $\lambda = - \vert u'' \vert^2$. Set $X := H^2_D(I)^d \times H^{-1}(I)$
and $\A := (u_D,0) + X$. Then $u$ is stationary for the bending
energy under the inextensibility constraint if and only if $(u, -\vert u'' \vert^2)$ is stationary for the
Lagrange functional $L: \A \to \real$,
\begin{equation*}
  \label{equation:Lagrange_functional}
  L(u,\lambda) := \frac{1}{2} \int_I \vert u'' \vert^2 \ dx + \frac{1}{2} \left\langle \lambda, \vert u' \vert^2 - 1 \right\rangle.
\end{equation*}
We now define $\widetilde F: X \to X'$ as
\begin{equation*}
  \label{equation:definition_F}
  \begin{aligned}
    \widetilde F(\widetilde{u},\lambda)[v,\mu]
    &:= \int_I (\widetilde{u} + u_D)'' \cdot v'' \ dx
    + \left\langle \lambda, (\widetilde{u}' + u_D') \cdot v' \right\rangle
    + \frac{1}{2} \left\langle \mu, \vert \widetilde{u}' + u_D' \vert^2 - 1 \right\rangle \\
    &= \delta L(u,\lambda)[v,\mu]
  \end{aligned}
\end{equation*}
with $u = \widetilde{u} + u_D \in \A_1$. Thus we get that a function
$u \in \A_1$ is a stationary point of $E$ if and only if
$F(u, - \vert u'' \vert^2) = 0$, where $F(u,\lambda) := \widetilde F(u - u_D,
\lambda)$. \\
A similar approach can be chosen to deal
with the discrete problem. We set $u_{D,h} := \I_{h,3} u_D$ and search for $u_h \in \A_{h,1} := u_{D,h} + \S^{3,1}_D(\T_h)^d$ satisfying the discrete inextensibility constraint $\I_{h,2}(\vert u_h' \vert^2 - 1) = 0$ and the Euler--Lagrange equation
\begin{equation*}\label{equation:discrete_P1_problem}
  \begin{aligned}
    \int_I u_h'' \cdot  v_h''  \ dx = 0
  \end{aligned}
\end{equation*}
for all $v_h \in \G_{h}(u_h) = \{v_h \in \S^{3,1}_D(\T_h) : \I_{h,2}(u_h' \cdot v_h') = 0\}$.\\
We now set
$X_h := (\S^{3,1}_D(\T_h), \Vert \cdot \Vert_{H^2(I)^d}) \times
(\S^{2,0}_0(\T_h), \Vert \cdot \Vert_{H^{-1}(I)}) \subset X$ and
$\A_h := (u_{D,h},0) + X_h$. 
We define the discrete Lagrange functional $L: \A_h \to \real$,
\begin{align*}
  L_h(u_h,\lambda_h)
  := \frac{1}{2} \int_I \vert u_h'' \vert^2 
  + \I_{h,2}(\lambda_h(\vert u_h' \vert^2 - 1)) \ dx ,
\end{align*}
and $\widetilde F_h: X_h \to X_h'$,
\begin{align*}
  \widetilde F_h(u_h - u_{D,h}, \lambda_h)[v_h,\mu_h]
  &:= \int_I u_h'' \cdot v_h'' + \I_{h,2} (\lambda_h u_h' \cdot v_h')
  + \frac{1}{2} \I_{h,2}(\mu_h (\vert u_h' \vert^2 - 1)) \ dx \\
  &= \delta L_h (u_h,\lambda_h)[v_h,\mu_h].
\end{align*}
Analogously to the continuous case, we have that $u_h \in \A_{h,1} :=
u_{D,h} + \S^{3,1}_D(\T_h)^d$ is stationary for
the bending energy $E$ under the discrete constraint $\I_{h,2}(\vert u_h'
\vert^2 - 1) = 0$ if and only if for some $\lambda_h \in  \S^{2,0}_0(\T_h)$  we have 
$F_h(u_h,\lambda_h) := \widetilde F_h(u_h - u_{D,h},\lambda_h) = 0$.
Hereby $\lambda_h$ is the unknown
discrete Lagrange multiplier corresponding to the discrete inextensibility
constraint. We want to obtain the existence of a solution $(u_h,\lambda_h)$
as well as an error estimate by applying the following  quantitative  version of the
inverse function theorem to $\widetilde F_h$ and the interpolants of the 
continuous solution $(u,\lambda)$.

\begin{lemma}[Inverse function theorem]\label{lemma:inverseFunction}
  Let $F: X \to X'$ for a Banach space X and assume there exist $\tilde{x} \in 
  X$, $c_L, c_{inv}, \delta, \varepsilon > 0$ such that
  \begin{enumerate}
    \item $\Vert F(\tilde{x}) \Vert_{X'} \leq \delta$,
    \item $F$ is Fréchet differentiable in $B_\varepsilon(\tilde{x})$,
    \item \label{equation:F Diffeomorphism}
    $\Vert DF(\tilde{x})^{-1} \Vert_{L(X',X)} \leq c_{inv}$,
    \item For all $x_1, x_2 \in B_\varepsilon(\tilde x): 
    \Vert DF(x_1) - DF(x_2) \Vert_{L(X,X')}
    \leq c_L \Vert x_1 - x_2 \Vert_X$,
    \item $c_L c_{inv} \varepsilon \leq \frac{1}{2}, \ 
    2 c_{inv} \delta \leq \varepsilon $.
  \end{enumerate}
  Then there exists a unique $x \in B_\varepsilon(\tilde{x})$ with 
  $F(x) = 0$.
\end{lemma}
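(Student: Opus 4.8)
The plan is to reduce the existence of a zero of $F$ in $B_\varepsilon(\tilde x)$ to a fixed-point problem for a suitable contraction map. Define $\Phi: B_\varepsilon(\tilde x) \to X$ by $\Phi(x) := x - DF(\tilde x)^{-1} F(x)$; clearly $\Phi(x) = x$ if and only if $F(x) = 0$ (using that $DF(\tilde x)^{-1}$ is a bijection by hypothesis (\ref{equation:F Diffeomorphism})). First I would verify that $\Phi$ maps the closed ball $\overline{B_\varepsilon(\tilde x)}$ into itself: for $x \in \overline{B_\varepsilon(\tilde x)}$, write $\Phi(x) - \tilde x = DF(\tilde x)^{-1}\big(DF(\tilde x)(x - \tilde x) - F(x) + F(\tilde x)\big) - DF(\tilde x)^{-1} F(\tilde x)$, estimate the first term using the fundamental theorem of calculus along the segment from $\tilde x$ to $x$ together with the Lipschitz bound in hypothesis (4), which gives $\Vert DF(\tilde x)(x-\tilde x) - (F(x) - F(\tilde x))\Vert_{X'} \leq \frac{c_L}{2}\Vert x - \tilde x\Vert_X^2 \leq \frac{c_L}{2}\varepsilon^2$, and bound the second term by $c_{inv}\delta$ using hypotheses (1) and (\ref{equation:F Diffeomorphism}). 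Altogether $\Vert \Phi(x) - \tilde x \Vert_X \leq c_{inv}\big(\tfrac{c_L}{2}\varepsilon^2 + \delta\big) = \tfrac{1}{2} c_L c_{inv}\varepsilon \cdot \varepsilon + c_{inv}\delta \leq \tfrac14\varepsilon + \tfrac12\varepsilon < \varepsilon$ by hypothesis (5).

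Next I would show $\Phi$ is a contraction on $\overline{B_\varepsilon(\tilde x)}$. For $x_1, x_2$ in the ball, $\Phi(x_1) - \Phi(x_2) = DF(\tilde x)^{-1}\big(DF(\tilde x)(x_1 - x_2) - (F(x_1) - F(x_2))\big)$, and writing $F(x_1) - F(x_2) = \int_0^1 DF(x_2 + t(x_1 - x_2))[x_1 - x_2]\,dt$ (valid since the segment stays in $B_\varepsilon(\tilde x)$, which is convex) gives $\Vert \Phi(x_1) - \Phi(x_2)\Vert_X \leq c_{inv}\int_0^1 \Vert DF(\tilde x) - DF(x_2 + t(x_1-x_2))\Vert_{L(X,X')}\,dt\,\Vert x_1 - x_2\Vert_X \leq c_{inv} c_L \varepsilon \Vert x_1 - x_2\Vert_X \leq \tfrac12 \Vert x_1 - x_2\Vert_X$ by hypotheses (4) and (5). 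By Banach's fixed point theorem $\Phi$ has a unique fixed point $x \in \overline{B_\varepsilon(\tilde x)}$, and the strict inequality in the self-mapping estimate places it in the open ball $B_\varepsilon(\tilde x)$; this $x$ is the desired unique zero of $F$.

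The one technical point that needs care — and the main obstacle, such as it is — is the use of the integral (mean-value) representation of $F(x_1) - F(x_2)$ for a map between Banach spaces: this requires Fréchet differentiability on the whole segment, which is exactly what hypothesis (2) provides since $B_\varepsilon(\tilde x)$ is convex, plus enough regularity (continuity of $t \mapsto DF(\cdot)[x_1-x_2]$, which follows from the Lipschitz estimate (4)) to make the Bochner integral well-defined and to justify the identity via the Hahn–Banach theorem applied to scalar-valued functions $t \mapsto \langle F(x_2 + t(x_1-x_2)), \psi\rangle$. Once this is in place, everything else is the standard Newton–Kantorovich–type bookkeeping, and the uniqueness is immediate from the contraction property.
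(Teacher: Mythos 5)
Your proposal is correct and follows essentially the same route as the paper: you apply the Banach fixed-point theorem to the Newton-type map $\Phi(x) = x - DF(\tilde x)^{-1}F(x)$, and the paper's operator $T(\varrho) = -DF(\tilde x)^{-1}\bigl(F(\tilde x)+\R(\tilde x,\varrho)\bigr)$ is exactly $\Phi(\tilde x+\varrho)-\tilde x$, so the two constructions coincide up to translation, with the same mean-value and Lipschitz estimates driving both the self-mapping and contraction bounds. Your handling of the closed versus open ball (running the fixed-point argument on the complete set $\overline{B_\varepsilon(\tilde x)}$ and then using the strict self-map inequality to land in the open ball) is in fact slightly more careful than the paper's, which works directly on $B_\varepsilon$.
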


\begin{proof}
  See Appendix \ref{appendix:inverse Function}.
\end{proof}

Assume $u \in H^4(I)$ is stationary for the bending energy under the
constraint $\vert u'\vert^2 = 1$ and let $\lambda \in H^2(I)$ be
the corresponding Lagrange multiplier, i.e. $(u,\lambda)$ is
stationary for the continuous saddle-point functional $L$.
Set
\begin{align*}
  \widetilde{u}_h := \I_{h,3}(u), \leer \widetilde{\lambda}_h := 
  \I_{h,2,0}(\lambda).
\end{align*}
We will show that for $h$ small enough, $\widetilde F_h$ and $\widetilde x = (\widetilde{u}_h, \widetilde{\lambda}_h)$ satisfy the
conditions of Lemma \ref{lemma:inverseFunction} with $\varepsilon = c h^2$,
thus yielding the existence of a solution to $F_h(u_h,\lambda_h) = 0$ and a quasi-optimal
error estimate $\Vert u_h - u \Vert_{H^2(I)^d} \leq c h^2$.
Since the definition of $F_h$ involves the term $(\cdot, \cdot)_{h,2}$, we 
first need an estimate for this lumped $L^2$-product.

\begin{lemma}[Quadrature control]\label{lem:quadControl}
  Let $\psi_h \in \S^{2,0}(\T_h)$ and $\Phi \in C^0(I)$ with $\Phi\vert_{I_i} 
  \in H^3(I_i)$
  for all $i = 1,...,M$. Then we have
  \begin{equation*}
    \vert (\psi_h,\Phi) - (\psi_h ,\Phi)_{h,2} \vert
    \leq c h^3 (\Vert \psi_h \Vert \Vert D_h^3 \Phi \Vert
    + \Vert \psi_h' \Vert \Vert D_h^2 \Phi \Vert
    + \Vert D_h \psi_h' \Vert \Vert \Phi' \Vert).
  \end{equation*}
\end{lemma}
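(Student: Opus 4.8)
The claim compares the exact $L^2$-product $(\psi_h, \Phi)$ with the lumped product $(\psi_h,\Phi)_{h,2} = \int_I \I_{h,2}(\psi_h \Phi)\,dx$. The natural strategy is to work element by element: write the difference as $\sum_{i=1}^M \int_{I_i} \big(\psi_h\Phi - \I_{h,2}(\psi_h\Phi)\big)\,dx$, which is a sum of local quadrature errors for the (degree $\leq 2$) interpolation rule on each $I_i$. On a single interval $I_i$, both $\psi_h$ and $\Phi$ are smooth enough ($\psi_h$ piecewise polynomial, $\Phi \in H^3(I_i)$) that $\psi_h\Phi \in H^3(I_i)$, so the interpolation estimate for $\I_{h,2}$ gives a bound of order $h_i^3 \|D^3(\psi_h\Phi)\|_{L^2(I_i)}$. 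Summing and using the quasi-uniformity assumption $h \leq c h_i$ turns this into $c h^3 \|D_h^3(\psi_h\Phi)\|$.

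**The key step: expanding the third derivative.** The heart of the proof is to bound $\|D_h^3(\psi_h\Phi)\|$ by the three terms on the right-hand side. By the Leibniz rule on each element,
\begin{equation*}
  D^3(\psi_h\Phi) = \psi_h D^3\Phi + 3 \psi_h' D^2\Phi + 3 \psi_h'' \Phi' + \psi_h''' \Phi.
\end{equation*}
Since $\psi_h\vert_{I_i} \in \P_2$, the last term $\psi_h''' \Phi$ vanishes identically. The remaining three terms, after taking $L^2(I_i)$ norms and summing over $i$, are controlled by $\|\psi_h\|\,\|D_h^3\Phi\|$, $\|\psi_h'\|\,\|D_h^2\Phi\|$, and $\|D_h\psi_h'\|\,\|\Phi'\|$ respectively — which is exactly the asserted bound. (Here $D_h\psi_h' = \psi_h''$ elementwise, matching the notation.) One should either use an elementwise $L^\infty$--$L^2$ splitting together with an inverse estimate $\|\psi_h\|_{L^\infty(I_i)} \leq c h_i^{-1/2}\|\psi_h\|_{L^2(I_i)}$ and then absorb a factor into a summation via quasi-uniformity, or — cleaner — simply apply Cauchy--Schwarz in the sum $\sum_i \|\psi_h^{(j)}\|_{L^2(I_i)} \|\Phi^{(k)}\|_{L^2(I_i)} \leq \|\psi_h^{(j)}\| \, \|\Phi^{(k)}\|$ after pulling the uniform constant $h^3$ out front; the latter avoids $L^\infty$ bounds entirely.

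**Assembling the estimate.** Combining the two steps: on each $I_i$,
\begin{equation*}
  \Big| \int_{I_i} \big(\psi_h\Phi - \I_{h,2}(\psi_h\Phi)\big)\,dx \Big|
  \leq c h_i^3 \| D^3(\psi_h\Phi)\|_{L^2(I_i)}
  \leq c h^3 \big( \|\psi_h\|_{L^2(I_i)} \|D^3\Phi\|_{L^2(I_i)} + \|\psi_h'\|_{L^2(I_i)} \|D^2\Phi\|_{L^2(I_i)} + \|\psi_h''\|_{L^2(I_i)} \|\Phi'\|_{L^2(I_i)} \big),
\end{equation*}
then sum over $i$ and apply Cauchy--Schwarz in $\mathbb{R}^M$ to each of the three sums to get the claimed global bound. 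One subtlety is justifying the local quadrature/interpolation estimate at precisely order $h_i^3$ for $\I_{h,2}$ acting on an $H^3$ function: the standard Bramble--Hilbert argument gives $\|w - \I_{h,2}w\|_{L^1(I_i)} \leq c h_i^{3 - 1/2}\|D^3 w\|_{L^2(I_i)}$, and then the missing half power of $h_i$ is recovered because we only need the integral (the $L^1$ norm) rather than a pointwise bound — or one invokes the interpolation estimate of Lemma~\ref{lemma:interpolation_estimate} directly.

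**Main obstacle.** The only genuinely delicate point is keeping the powers of $h$ sharp: a naive use of $L^\infty$ bounds for $\psi_h$ costs an inverse power $h^{-1/2}$ that must be recovered, so one should either route everything through $L^2$-based Cauchy--Schwarz in the element sum (my preferred route, which never introduces the bad power) or track the extra half-power from integrating a pointwise $L^\infty$ bound over $I_i$. Everything else — the vanishing of $\psi_h'''$, the Leibniz expansion, quasi-uniformity — is routine bookkeeping.
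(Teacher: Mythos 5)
Your proposal is correct and takes essentially the same route as the paper: bound the quadrature error by the elementwise $L^1$ interpolation error for $\I_{h,2}$ applied to $\psi_h\Phi$, expand $D_h^3(\psi_h\Phi)$ by the Leibniz rule noting $\psi_h'''=0$, and close with H\"older/Cauchy--Schwarz. The only slip is the exponent in your ``Bramble--Hilbert aside'' (the scaling is $h_i^{3+1/2}$, not $h_i^{3-1/2}$, when pairing an $L^1$ left side with an $L^2$ right side), but this does not affect the argument since you also point to invoking Lemma~\ref{lemma:interpolation_estimate} with $p=1$ directly, which is exactly what the paper does.
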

\begin{proof}
  Since $\psi_h$ is elementwise $\mathcal{P}_2$ and $\Phi$ is elementwise $H^3$,
  $\psi_h \Phi$ is elementwise $H^3$ as well. Since $\psi_h \Phi$ is also
  continuous, $\I_{h,2}(\psi_h \Phi)$ is well defined and  applying the interpolation estimate of Lemma~\ref{lemma:interpolation_estimate} elementwise yields 
  \begin{align*}
    \vert (\psi_h,\Phi) - (\psi_h ,\Phi)_{h,2} \vert
    &\leq \int_I \vert \psi_h \Phi - \I_{h,2}(\psi_h \Phi) \vert \ dx \\
    &\leq c h^3 \int_I \vert D_h^3 (\psi_h \Phi) \vert \ dx \\
    &\leq c h^3 (\Vert \psi_h \Vert \Vert D_h^3 \Phi \Vert
    + \Vert D_h \psi_h \Vert \Vert D_h^2 \Phi \Vert
    + \Vert D_h^2 \psi_h \Vert \Vert D_h \Phi \Vert).
  \end{align*}
  In the last step we have used the product rule, Hölder's inequality as well as the fact that $D_h^k \psi_h = 0$ for $k > 2$.
\end{proof}

Now we can show that $F_h(\widetilde u_h, \widetilde \lambda_h)$ is controlled in terms of the mesh size such that $(1)$ of Lemma~\ref{lemma:inverseFunction} holds with $\delta = c h^2$.

\begin{lemma}[Boundedness]\label{lem:boundedness}
  The functional $F_h(\widetilde{u}_h, \widetilde{\lambda}_h)$ satisfies
  \begin{equation*}
    \Vert F_h(\widetilde{u}_h, \widetilde{\lambda}_h) \Vert_{X_h'} \leq c h^2.
  \end{equation*}
\end{lemma}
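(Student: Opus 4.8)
The plan is to estimate $\Vert F_h(\widetilde u_h,\widetilde\lambda_h)\Vert_{X_h'}$ by testing against an arbitrary $(v_h,\mu_h)\in X_h$ with $\Vert v_h\Vert_{H^2(I)^d}\le 1$ and $\Vert\mu_h\Vert_{H^{-1}(I)}\le 1$, and bounding the three contributions
\begin{align*}
  \widetilde F_h(\widetilde u_h - u_{D,h},\widetilde\lambda_h)[v_h,\mu_h]
  = \int_I \widetilde u_h'' \cdot v_h'' \ dx
  + \int_I \I_{h,2}(\widetilde\lambda_h \widetilde u_h' \cdot v_h') \ dx
  + \frac{1}{2}\int_I \I_{h,2}(\mu_h(\vert\widetilde u_h'\vert^2 - 1)) \ dx
\end{align*}
separately. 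First I would rewrite each term by subtracting and adding the corresponding continuous quantity evaluated at $(u,\lambda)$; since $(u,\lambda)$ is stationary for $L$, the exact continuous expression $\int_I u'' \cdot v_h'' + \lambda u' \cdot v_h' + \tfrac12\mu_h(\vert u'\vert^2-1)\ dx$ vanishes for the chosen test functions (note $v_h\in H^2_D(I)^d$ and $\mu_h\in H^{-1}(I)$ are admissible in the continuous problem). What remains is a sum of consistency errors of two types: interpolation errors coming from replacing $u$ by $\widetilde u_h=\I_{h,3}u$ and $\lambda$ by $\widetilde\lambda_h=\I_{h,2,0}\lambda$, and quadrature errors coming from replacing the exact $L^2$-product by the lumped product $(\cdot,\cdot)_{h,2}$.

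For the bending term I would write $\int_I \widetilde u_h'' \cdot v_h'' = \int_I (\widetilde u_h - u)'' \cdot v_h''$ and use the interpolation estimate of Lemma~\ref{lemma:interpolation_estimate} (together with $u\in H^4$) to get $\Vert(\widetilde u_h - u)''\Vert \le c h^2 \Vert u\Vert_{H^4(I)^d}$, hence this term is $\le c h^2 \Vert v_h''\Vert \le c h^2$. For the multiplier term $\int_I \I_{h,2}(\widetilde\lambda_h \widetilde u_h' \cdot v_h')$ I would split it as
\begin{align*}
  \underbrace{\int_I \I_{h,2}(\widetilde\lambda_h \widetilde u_h' \cdot v_h') - (\widetilde\lambda_h \widetilde u_h')\cdot v_h' \ dx}_{\text{quadrature error}}
  + \underbrace{\int_I (\widetilde\lambda_h\widetilde u_h' - \lambda u')\cdot v_h' \ dx}_{\text{interpolation error}},
\end{align*}
estimating the second piece directly by $\Vert\widetilde\lambda_h\widetilde u_h' - \lambda u'\Vert \Vert v_h'\Vert$ and splitting the numerator once more into $(\widetilde\lambda_h-\lambda)\widetilde u_h'$ plus $\lambda(\widetilde u_h'-u')$, each $O(h^2)$ by interpolation (using $\lambda\in H^2$, $u\in H^4$ and the $W^{1,\infty}$-stability of the interpolants). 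The quadrature error is the place where Lemma~\ref{lem:quadControl} enters: with $\psi_h = \widetilde u_h'\cdot v_h' \in \S^{2,0}(\T_h)$ — wait, $\widetilde u_h'\cdot v_h'$ is elementwise $\mathcal P_4$, not $\mathcal P_2$, so instead I would take $\psi_h=\widetilde\lambda_h\in\S^{2,0}(\T_h)$ and $\Phi = \widetilde u_h'\cdot v_h'$, which is elementwise $H^3$ (indeed polynomial), and apply Lemma~\ref{lem:quadControl} to obtain a bound of order $h^3$ times norms of $\widetilde\lambda_h$, $\widetilde u_h'\cdot v_h'$ and their derivatives; using inverse estimates on $v_h$ (e.g. $\Vert D_h v_h''\Vert \le c h^{-1}\Vert v_h''\Vert$) the worst term becomes $c h^3 \cdot h^{-1} = c h^2$. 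The constraint term $\tfrac12\int_I\I_{h,2}(\mu_h(\vert\widetilde u_h'\vert^2-1))$ is handled the same way: add and subtract $(\vert\widetilde u_h'\vert^2-1,\mu_h)$, bound the interpolation part using $\vert u'\vert^2=1$ and $\vert\widetilde u_h'\vert^2 - 1 = (\widetilde u_h' - u')\cdot(\widetilde u_h' + u')$ which is $O(h^2)$ in a norm dual to $H^{-1}$ (so actually I would integrate by parts, writing $\vert\widetilde u_h'\vert^2-1 = \del_x g_h$ for a primitive $g_h$ with $\Vert g_h\Vert\le c h^2$ obtained by integrating the $L^2$-small quantity $2(\widetilde u_h'-u')\cdot\del_x(\cdots)$, and pairing with $\mu_h\in H^{-1}$), and bound the quadrature part again via Lemma~\ref{lem:quadControl} with a suitable split of the integrand between an $\S^{2,0}$-factor and an $H^3$-factor.

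The main obstacle I expect is the $H^{-1}$-duality bookkeeping for the constraint term combined with the mismatch that $\vert\widetilde u_h'\vert^2-1$ is elementwise $\mathcal P_4$ rather than $\mathcal P_2$: one has to be careful that Lemma~\ref{lem:quadControl} is only stated for an $\S^{2,0}$-factor times an elementwise-$H^3$ factor, so in each quadrature estimate exactly one factor must be forced into $\S^{2,0}(\T_h)$ (namely $\widetilde\lambda_h$ or $\mu_h$), while the remaining product of derivatives of $\widetilde u_h$ and $v_h$ is absorbed into the $\Phi$-slot and controlled by inverse estimates; tracking the powers of $h$ from these inverse estimates to confirm that nothing degrades below $h^2$ is the delicate part. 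A secondary point is verifying that the continuous stationarity of $(u,\lambda)$ really does kill the leading-order term for \emph{discrete} test functions $v_h$, which requires $v_h$ to lie in $H^2_D(I)^d$ and $\mu_h$ to be a legitimate element of $H^{-1}(I)$ — both true by construction since $X_h\subset X$.
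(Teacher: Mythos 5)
Your high-level strategy coincides with the paper's: subtract the continuous identity $F(u,\lambda)[v_h,\mu_h]=0$, split the defect into a bending term, a multiplier term and a constraint term, and estimate each as interpolation error plus quadrature error via Lemma~\ref{lem:quadControl} and inverse estimates. The bending term, the assignment of factors to the $\S^{2,0}$-slot versus the $\Phi$-slot in Lemma~\ref{lem:quadControl}, and the role of inverse estimates in absorbing the extra powers of $h$ are all handled correctly; in particular, your self-correction that $\widetilde u_h'\cdot v_h'$ is elementwise $\P_4$ and must go in the $\Phi$-slot, with $\widetilde\lambda_h$ or $\mu_h$ in the $\S^{2,0}$-slot, is exactly right.

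There is, however, a genuine gap in the multiplier interpolation error. You write it as $\int_I(\widetilde\lambda_h\widetilde u_h'-\lambda u')\cdot v_h'\,dx$, split into $(\widetilde\lambda_h-\lambda)\widetilde u_h'$ plus $\lambda(\widetilde u_h'-u')$, and assert both are $O(h^2)$ by interpolation. This fails: $\widetilde\lambda_h=\I_{h,2,0}\lambda$ is forced to vanish at $\partial I$, whereas $\lambda=-|u''|^2$ generally does not, so the boundary correction gives $\|\widetilde\lambda_h-\lambda\|_{L^2}\gtrsim h^{1/2}\bigl(|\lambda(a)|+|\lambda(b)|\bigr)$. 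Even exploiting $v_h'|_{\partial I}=0$ (so $|v_h'(x)|\lesssim (x-a)^{1/2}\Vert v_h''\Vert$ near $a$), the boundary element contributes only $O(h^{3/2})$, not $O(h^2)$. What the paper uses, and what your argument needs, is the purely algebraic identity $(\widetilde\lambda_h,\widetilde u_h'\cdot v_h')_{h,2}=(\I_{h,2}\lambda,\widetilde u_h'\cdot v_h')_{h,2}$: the lumped product samples only the nodes of $\N_2(\T_h)$, and at the two boundary nodes $\widetilde u_h'\cdot v_h'$ already vanishes because $v_h\in\S^{3,1}_D(\T_h)^d$. After substituting $\I_{h,2}\lambda$ for $\widetilde\lambda_h$ inside the lumped product, $\|\I_{h,2}\lambda-\lambda\|=O(h^2)$ is true and your bookkeeping closes; without this step it does not reach $h^2$. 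A secondary concern: the integration-by-parts device you sketch for the constraint term (write $|\widetilde u_h'|^2-1=\partial_x g_h$ and pair $g_h$ with $\mu_h'$) costs $\|\mu_h'\|\lesssim h^{-2}\|\mu_h\|_{H^{-1}(I)}$ against $\|g_h\|\lesssim h^3$, yielding only $O(h)$. The paper instead pairs $\mu_h$ directly against $(\widetilde u_h'-u')\cdot(\widetilde u_h'+u')$ in the $H^1(I)'\times H^1(I)$ duality, which puts no derivative on $\mu_h$ and preserves the rate.
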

\begin{proof}
  Let $(v_h,\mu_h) \in X_{h}$. Since $(u,\lambda)$ satisfies $F(u,\lambda) = 0$ we
  get
  \begin{align*}
    \vert F_h(\widetilde{u}_h, \widetilde{\lambda}_h)[v_h, \mu_h] \vert
    &= \vert F_h(\widetilde{u}_h, \widetilde{\lambda}_h)[v_h, \mu_h] 
    - F(u,\lambda)[v_h, \mu_h] \vert \\
    &\leq \vert (\widetilde{u}_h'', v_h'') - (u'',v_h'') \vert
    + \vert (\widetilde{\lambda}_h, \widetilde{u}_h' \cdot v_h')_{h,2}
    - (\lambda, u' \cdot v_h') \vert
    + \vert (\mu_h, \vert \widetilde{u}_h' \vert^2 - 1)_{h,2} \vert \\
    &=: \Rom{1} + \Rom{2} + \Rom{3}.
  \end{align*}
  For $\Rom{1}$ an interpolation estimate shows
  \begin{equation}\label{eq:I}
    \Rom{1} = \vert (\I_{h,3}(u)'' - u'', v_h'') \vert 
    \leq \Vert \I_{h,3}(u)'' - u'' \Vert \Vert v_h'' \Vert
    \leq c h^2 \Vert u \Vert_{H^4(I)^d} \Vert v_h \Vert_{H^2(I)^d}.
  \end{equation}
  For $\Rom{2}$ we first note that $v_h \in \S^{3,1}_D(\T_h)^d$, therefore we have $\widetilde{u}_h' \cdot v_h' = 0$ on $\del I$. Since $\I_{h,2,0}v(z) = \I_{h,2}v(z)$ for all nodes $z \in \N_2(\T_h) \setminus \del I$, we obtain $(\widetilde{\lambda}_h, \widetilde{u}_h' \cdot
  v_h')_{h,2} = (\I_{h,2} \lambda, \widetilde{u}_h' \cdot v_h')_{h,2}$. With
  $\widetilde{\lambda} := \I_{h,2} \lambda$ we therefore get
  \begin{align*}
    \Rom{2} 
    &\leq \vert (\widetilde{\lambda}, \widetilde{u}_h' \cdot v_h')_{h,2}
    - (\widetilde{\lambda}, \widetilde{u}_h' \cdot v_h') \vert
    + \vert (\widetilde{\lambda}, \widetilde{u}_h' \cdot v_h')
    - (\widetilde{\lambda}, u' \cdot v_h') \vert
    + \vert (\widetilde{\lambda}, u' \cdot v_h')
    - (\lambda, u' \cdot v') \vert \\
    &=: \Rom{2}_1 + \Rom{2}_2 + \Rom{2}_3.
  \end{align*}
  To estimate $\Rom{2}_1$ we use Lemma \ref{lem:quadControl} and the fact
  that $D_h^4 v_h = 0$ for $v_h \in \S^{3,1}(\T_h)$. We get
  \begin{align*}
    \Rom{2}_1 
    &\leq c h^3 (\Vert \widetilde{\lambda} \Vert 
    \Vert D_h^3(\widetilde{u}_h' \cdot v_h') \Vert
    + \Vert \widetilde{\lambda}' \Vert \Vert D_h^2(\widetilde{u}_h' \cdot v_h') \Vert
    + \Vert D_h \widetilde{\lambda}' \Vert \Vert D_h (\widetilde{u}_h' \cdot v_h')\Vert)\\
    &\leq c h^3 \Vert \widetilde{\lambda} \Vert
    (\Vert D_h \widetilde{u}_h'' \Vert_{L^\infty(I)^d} \Vert v_h'' \Vert
    + \Vert \widetilde{u}_h'' \vert_{L^\infty(I)^d} \Vert D_h v_h'' \Vert) \\
    &\kurz + ch^3 \Vert \widetilde{\lambda}' \Vert
    (\Vert D_h \widetilde{u}_h'' \Vert \Vert v_h' \Vert_{L^\infty(I)^d}
    + \Vert \widetilde{u}_h'' \Vert_{L^\infty(I)^d} \Vert v_h'' \Vert
    + \Vert \widetilde{u}_h' \Vert_{L^\infty(I)^d} \Vert D_h v_h'' \Vert)\\
    &\kurz + ch^3 \Vert D_h \widetilde{\lambda}' \Vert
    (\Vert \widetilde{u}_h'' \Vert \Vert v_h' \Vert_{L^\infty(I)^d}
    + \Vert \widetilde{u}_h' \Vert_{L^\infty(I)^d} \Vert v_h'' \Vert).
  \end{align*}
  
  Combining the interpolation estimate of Lemma~\ref{lemma:interpolation_estimate} with the inverse estimate of Lemma~\ref{lemma:inverse_estimates}  yields $\Vert \widetilde{\lambda} \Vert_{H^2(I)} \leq c \Vert \lambda \Vert_{H^2(I)}$
  and $\Vert D_h^k \widetilde{u}_h \Vert \leq c \Vert u \Vert_{H^4(I)^d}$ for all $k \leq 4$.
  A Sobolev embedding theorem implies $\Vert D_h^k \widetilde{u}_h \Vert_{L^\infty(I_i)^d}
  \leq c \Vert D_h^{k} \widetilde{u}_h \Vert_{H^1(I_i)^d}$.  From the inverse estimate of Lemma \ref{lemma:inverse_estimates} we have  $\Vert D_h v_h'' \Vert  \leq c h^{-1} \Vert v_h \Vert_{H^2(I)^d}$.
   Combining these estimates yields 
  \begin{equation*}
    \Rom{2}_1 \leq c h^2 \Vert \lambda \Vert_{H^2(I)} \Vert u \Vert_{H^4(I)^d}
    \Vert v_h \Vert_{H^2(I)^d}.
  \end{equation*}
  Next we simply use Hölder's inequality and an interpolation estimate to obtain
  a bound on $\Rom{2}_2$:
  \begin{equation*}
    \Rom{2}_2 = \vert (\widetilde{\lambda}, (\widetilde{u}_h' - u') \cdot v_h') \vert
    \leq \Vert \widetilde{\lambda} \Vert \Vert \widetilde{u}_h' - u' \Vert 
    \Vert v_h' \Vert_{L^\infty(I)^d}
    \leq c h^3 \Vert \lambda \Vert_{H^2(I)} \Vert u \Vert_{H^4(I)^d} 
    \Vert v_h \Vert_{H^2(I)^d}.
  \end{equation*}
  Finally using an interpolation estimate on $\Rom{2}_3$ yields
  \begin{equation*}
    \Rom{2}_3 = \vert (\widetilde{\lambda} - \lambda, u' \cdot v_h' ) \vert
    \leq \Vert \widetilde{\lambda} - \lambda \Vert \Vert u' \cdot v_h' \Vert
    \leq c h^2 \Vert \lambda \Vert_{H^2(I)} \Vert u \Vert_{H^4(I)^d}
    \Vert v_h \Vert_{H^2(I)^d}.
  \end{equation*}
  Combining these three estimates yields
  \begin{equation}
    \label{eq:II}
    \Rom{2} \leq c h^2 \Vert \lambda \Vert_{H^2(I)}
    \Vert u \Vert_{H^4(I)^d} \Vert v_h \Vert_{H^2(I)^d}.
  \end{equation}
  It remains  to estimate $\Rom{3}$. We therefore write
  \begin{align*}
    \Rom{3}
    &= \left\vert \int_I \I_{h,2}(\mu_h(\vert \widetilde{u}_h' \vert^2 
    - \vert u' \vert^2 )) \ dx \right\vert \\
    &\leq \left\vert \int_I (\mu_h(\vert \widetilde{u}_h' \vert^2 
    - \vert u' \vert^2 )) \ dx \right\vert
    + \int_I \vert \I_{h,2}(\mu_h(\vert \widetilde{u}_h' \vert^2 
    - \vert u' \vert^2 ))  - (\mu_h(\vert \widetilde{u}_h' \vert^2 
    - \vert u' \vert^2 )) \vert \ dx \\
    &\leq \left\vert \int_I \mu_h( \widetilde{u}_h' - u') \cdot (\widetilde{u}_h' + u')
    \ dx \right\vert
    + c h^3 \int_I \vert D_h^3(\mu_h( \widetilde{u}_h' - u') 
    \cdot (\widetilde{u}_h' + u')) \vert \ dx \\
    &=: \Rom{3}_1 + c h^3 \Rom{3}_2.
  \end{align*}
  $\Rom{3}_1$ can easily be  controlled  using an interpolation estimate:
  \begin{equation*}
    \Rom{3}_1 
    \leq \Vert \mu_h \Vert_{H^1(I)'}
    \Vert (\widetilde{u}_h' - u') \cdot (\widetilde{u}_h' + u') \Vert_{H^1(I)}
    \leq c h^2 \Vert u \Vert_{H^4(I)^d}^2 \Vert \mu_h \Vert_{H^1(I)'}.
  \end{equation*}
  To bound $\Rom{3}_2$  we use  an inverse  estimate to  obtain
  $\Vert D_h^k \mu_h \Vert \leq c h^{-(k+1)} \Vert \mu_h \Vert_{H^1(I)'}$.
  With this estimate we get
  \begin{align*}
    \Rom{3}_2
    &\leq \Vert \mu_h \Vert 
    \Vert D_h^3((\widetilde{u}_h' - u') \cdot (\widetilde{u}_h' + u')) \Vert
    + c \Vert \mu_h' \Vert 
    \Vert D_h^2((\widetilde{u}_h' - u') \cdot (\widetilde{u}_h' + u')) \Vert \\
    &\kurz + c \Vert D_h^2 \mu_h \Vert
    \Vert D_h((\widetilde{u}_h' - u') \cdot (\widetilde{u}_h' + u')) \Vert \\
    &\leq c \Vert \mu_h \Vert \Vert u \Vert_{H^4(I)^d}^2
    + ch \Vert \mu_h' \Vert \Vert u \Vert_{H^4(I)^d}^2
    + ch^2 \Vert D_h^2 \mu_h \Vert \Vert u \Vert_{H^4(I)^d}^2 \\
    &\leq c h^{-1} \Vert \mu_h \Vert_{H^1(I)'} \Vert u \Vert_{H^4(I)^d}.
  \end{align*}
   In combination,  these two estimates yield
  \begin{equation}\label{eq:III}
    \Rom{3} = \Rom{3}_1 + c h^3 \Rom{3}_2 \leq c h^2 \Vert u \Vert_{H^4(I)^d}^2 \Vert \mu_h \Vert_{H^1(I)'}.
  \end{equation}
  Finally (\ref{eq:I}), (\ref{eq:II}) and (\ref{eq:III})  lead to 
  \begin{align*}
    \vert F_h(\widetilde{u}_h, \widetilde{\lambda}_h)[v_h,\mu_h] \vert
    &\leq c h^2 \Vert u \Vert_{H^4(I)^d} (\Vert \lambda \Vert_{H^2(I)} \Vert
    + \Vert u \Vert_{H^4(I)^d})
    (\Vert \mu_h \Vert_{H^1(I)'} + \Vert v_h \Vert_{H^2(I)^d}) \\
    &\leq c_{\lambda, u} h^2 \Vert (\mu_h, v_h) \Vert_{X_h} ,
  \end{align*}
  which finishes the proof.
\end{proof}

\begin{remark}
  In case we replace the discrete inextensibility constraint $\I_{h,2} \vert u_h' \vert^2 = 1$ with the weaker constraint $\I_{h,1} \vert u_h' \vert^2 = 1$ we only have the reduced estimate
  \begin{align*}
    \Vert F_h(\widetilde u_h, \widetilde \lambda_h) \Vert_{X_h'}
    \leq c h.
  \end{align*}
  This is because from an interpolation estimate and an inverse estimate we only get
  \begin{align*}
    \Vert (\widetilde \lambda, \widetilde u_h' \cdot v_h')_{h,1} - (\widetilde \lambda, \widetilde u_h' \cdot v_h') \vert
    &\leq c h^2 \Vert D_h^2(\widetilde \lambda \widetilde u_h' \cdot v_h') \Vert
    \leq c h^2 \Vert \lambda \Vert_{H^2(I)} \Vert u \Vert_{H^3(I)^d} \Vert v_h \Vert_{H^3(I)^d} \\
    &\leq c h \Vert v_h \Vert_{H^2(I)^d}.
  \end{align*}
  So in case all the other assumptions from Lemma \ref{lemma:inverseFunction} hold, this would imply linear convergence of the approximation error.
\end{remark}


The differentiability of $F_h$ is clearly not a problem, so  next we  focus
on the invertibility of $DF_h(\widetilde{u}_h, \widetilde{\lambda}_h)$. 
For this we use Brezzi's theorem,
\begin{lemma}[Brezzi]\label{lem:Brezzi}
  Let $V,Q$ be Hilbert spaces, $X := V \times Q$. Further assume
  $a: V \times V \to \real$ and $b: X \to \real$ are bounded and bilinear.
  We define $B: V \to Q',\ B(v)[q] := b(v,q)$ and
  $L: X \to X', (v,q) \mapsto (a(v,\cdot) + b(\cdot, q), b(v,\cdot))$
  Then $L$ is an isomorphism if and only if
  \begin{enumerate}
    \item $\exists \alpha > 0: \forall v \in \ker B: a(v,v) \geq \alpha \Vert v \Vert_V^2$,
    \item $\exists \beta > 0: \forall q \in Q: \sup_{v \in V \setminus \{0\} }
    \frac{b(v,q)}{\Vert v \Vert_V} \geq \beta \Vert q \Vert_Q$.
  \end{enumerate}
\end{lemma}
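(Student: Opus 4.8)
The plan is to prove the two implications separately, with the bulk of the work in the ``if'' direction, which is the one we actually invoke. Throughout, set $V_0 := \ker B$, a closed subspace of $V$ since $B$ is bounded, and use the orthogonal decomposition $V = V_0 \oplus V_0^\perp$ into closed, hence Hilbert, subspaces. Let $B' : Q \to V'$ be the adjoint of $B$, so that $(B'q)(v) = b(v,q)$; condition (2) is precisely the statement that $B'$ is bounded below, $\Vert B' q \Vert_{V'} \ge \beta \Vert q \Vert_Q$ for all $q \in Q$.

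\emph{Sufficiency.} Assume (1) and (2). First I would draw the consequences of the inf-sup condition: since $B'$ is injective with closed range (being bounded below), the closed range theorem shows that $B$ is surjective onto $Q'$, that $\operatorname{ran}(B')$ coincides with the annihilator of $V_0$ in $V'$, and that $B$ restricts to an isomorphism $V_0^\perp \to Q'$ with $\Vert Bv \Vert_{Q'} \ge \beta \Vert v \Vert_V$ on $V_0^\perp$, so its right inverse has norm at most $\beta^{-1}$. Given data $(f,g) \in X' = V' \times Q'$, I would then produce a solution of $L(u,p) = (f,g)$ in three steps. (i) Pick $u_1 \in V_0^\perp$ with $B u_1 = g$ and $\Vert u_1 \Vert_V \le \beta^{-1} \Vert g \Vert_{Q'}$. (ii) As $a$ restricted to $V_0 \times V_0$ is bounded and, by (1), coercive, the Lax--Milgram lemma gives a unique $u_0 \in V_0$ with $a(u_0, v_0) = f(v_0) - a(u_1, v_0)$ for all $v_0 \in V_0$; set $u := u_0 + u_1$, so that $Bu = g$ and $a(u, v_0) = f(v_0)$ for every $v_0 \in V_0$. (iii) The functional $\ell := f - a(u, \cdot) \in V'$ annihilates $V_0$ by construction, hence lies in $\operatorname{ran}(B')$, so there is $p \in Q$ with $b(v,p) = \ell(v)$, i.e.\ $a(u,v) + b(v,p) = f(v)$ for all $v \in V$. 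Then $L(u,p) = (f,g)$, which proves surjectivity.

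\emph{A priori bound, injectivity, and the converse.} Next I would establish an estimate valid for \emph{every} solution of $L(u,p) = (f,g)$: writing $u = u_0 + u_1$ with $u_0 \in V_0$, $u_1 \in V_0^\perp$, the identity $B u_1 = Bu = g$ gives $\Vert u_1 \Vert_V \le \beta^{-1} \Vert g \Vert_{Q'}$; testing the first equation with $v = u_0 \in V_0$ kills the term $b(u_0, p)$ and, using (1), yields $\alpha \Vert u_0 \Vert_V^2 \le a(u_0, u_0) \le (\Vert f \Vert_{V'} + \Vert a \Vert \, \Vert u_1 \Vert_V) \Vert u_0 \Vert_V$; finally $b(\cdot, p) = f - a(u, \cdot)$ together with (2) bounds $\Vert p \Vert_Q$. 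Taking $(f,g) = 0$ forces $(u,p) = 0$, so $L$ is a continuous bijection with inverse bounded in terms of $\Vert a \Vert, \alpha, \beta$, hence an isomorphism. For the converse, assume $L$ is an isomorphism: applying $L^{-1}$ to data of the form $(0,g)$ yields $(u,p)$ with $Bu = g$ and $\Vert u \Vert_V \le \Vert L^{-1} \Vert \, \Vert g \Vert_{Q'}$, so $B$ has a bounded right inverse, which by a Hahn--Banach argument is equivalent to (2); restricting to $V_0$ (feed $L^{-1}$ with $(f,0)$, $f$ a norm-preserving extension of a functional on $V_0$) shows the reduced form $a|_{V_0 \times V_0}$ induces a well-posed problem on $V_0$, which gives the inf-sup reformulation of (1), equivalent to the stated coercivity under the symmetry and positivity of $a$ on $V_0$ that hold at a stable stationary state.

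\emph{Main obstacle.} The one genuinely non-routine point is step (iii) — equivalently the identity $\operatorname{ran}(B') = \{\,\ell \in V' : \ell|_{\ker B} = 0\,\}$ — since this is exactly where the inf-sup condition (2) is used, via the closed range theorem, to guarantee that $\operatorname{ran}(B')$ is \emph{closed}; without this a functional vanishing on $\ker B$ need not be of the form $b(\cdot, p)$ for an honest $p \in Q$. Everything else reduces to Lax--Milgram on the closed subspace $V_0$ and careful bookkeeping with the pairings in $X' = V' \times Q'$.
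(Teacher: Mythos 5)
The paper's ``proof'' of this lemma is a one-line citation to Brezzi's 1974 theorem and Lax--Milgram, so there is no internal argument to compare against; you have instead reconstructed the standard textbook proof, and your reconstruction is essentially correct. The sufficiency argument is sound: you correctly translate the inf-sup condition (2) into the statement that the adjoint $B'\colon Q\to V'$ is bounded below, use the closed-range theorem to get both $\operatorname{ran}(B)=Q'$ and $\operatorname{ran}(B')=V_0^\circ$ (the annihilator of $\ker B$), invoke the standard singular-value argument to get the complementary bound $\Vert Bv\Vert_{Q'}\ge\beta\Vert v\Vert_V$ on $V_0^\perp$, and then solve via the usual three-step reduction (lift $g$, Lax--Milgram on $V_0$, recover $p$ from $\operatorname{ran}(B')=V_0^\circ$). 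Your a priori estimate, obtained by testing the first equation with the $V_0$-component of $u$, is the right one and closes the argument. You also correctly flag the one genuine weakness in the paper's formulation: as stated, with $a$ an arbitrary bounded bilinear form, the ``only if'' direction is false --- an isomorphism $L$ forces an inf-sup condition for $a$ on $V_0\times V_0$, not coercivity, and these are equivalent only under additional hypotheses such as the symmetry and nonnegativity of $a$ on the kernel. A simple finite-dimensional example (take $b(v,q)=v_1q$ on $\mathbb{R}^2\times\mathbb{R}$ and $a(v,w)=-v_2w_2$) gives an isomorphism with $a$ negative definite on $\ker B$. Since the paper only ever applies the sufficiency direction, this does not affect any downstream result, but your observation identifies a real imprecision in the statement as written.
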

\begin{proof}
  The statement follows from the Lax--Milgram Lemma and \cite[Theorem 1.1]{Brezzi1974}.
\end{proof}
To apply this result we first rewrite $DF: \A \to L(X,X')$ as
\begin{align*}
  DF(u,\lambda)[(v,\mu),(w,\eta)] 
  = a_\lambda(v,w) + b_u(w,\mu) + b_u(v,\eta)
\end{align*}
with
\begin{align*}
  a_\lambda(v,w) := (v'', w'') + (\lambda, v' \cdot w'), \leer
  b_u(v,\mu) := (\mu, u' \cdot v').
\end{align*}
Analogously we rewrite $DF_h: \A_h \to L(X_{h},X_{h}')$ as
\begin{align*}
  DF_h(u_h,\lambda_h)[(v_h,\mu_h),(w_h,\eta_h)]
  &= a_{\lambda_h}(v_h,w_h) + b_{u_h}(w_h, \mu_h) + b_{u_h}(v_h, \eta_h)
\end{align*}
with 
\begin{align*}
  a_{\lambda_h}(v_h, w_h) := (v_h'', w_h'') + (\lambda_h, v_h' \cdot w_h')_{h,2}, \leer
  b_{u_h}(v_h,\eta_h) = (\eta_h, u_h' \cdot v_h')_{h,2}.
\end{align*}
We also define $B_{\widetilde{u}_h}(v_h)[\mu_h] := 
b_{\widetilde{u}_h}(v_h,\mu_h)$ and $B_u(v)[\mu] = b_u(v,\mu)$.
So to show that $DF_h(\widetilde{u}_h, \widetilde{\lambda}_h)$ is an 
isomorphism, according to Lemma \ref{lem:Brezzi} we have to
show that $a_{\widetilde{\lambda}_h}$ is coercive on $\ker B_{\widetilde{u}_h}$ 
and that $b_{\widetilde{u}_h}$ satisfies an inf-sup-condition.
We start showing coercivity.

\begin{lemma}[Coercivity]\label{lem:coercivity}
  Assume there exists $\alpha > 0$ such that $a_\lambda$ satisfies the coercivity
  condition $a_\lambda(v,v) \geq \alpha \Vert v \Vert_{H^2(I)^d}^2$ for all
  $v \in \ker B_u$. Then for $h$ small enough we have
  \begin{equation*}
    a_{\widetilde{\lambda}_h}(v_h, v_h) \geq \frac{\alpha}{2} \Vert v_h \Vert_{H^2(I)^d}^2
  \end{equation*}
  for all $v_h \in \ker B_{\widetilde u_h}$.
\end{lemma}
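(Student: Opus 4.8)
The plan is to transfer the continuous coercivity on $\ker B_u$ to the discrete form $a_{\widetilde\lambda_h}$ on $\ker B_{\widetilde u_h}$ by controlling three sources of error: the difference between $\widetilde\lambda_h$ and $\lambda$, the difference between $\widetilde u_h$ and $u$ (which enters through the kernel condition), and the quadrature error between $(\cdot,\cdot)$ and $(\cdot,\cdot)_{h,2}$. The first step is to split
\begin{align*}
  a_{\widetilde\lambda_h}(v_h,v_h)
  &= (v_h'',v_h'') + (\widetilde\lambda_h, |v_h'|^2)_{h,2} \\
  &= a_\lambda(v_h,v_h)
  + \big[(\widetilde\lambda_h, |v_h'|^2)_{h,2} - (\lambda, |v_h'|^2)\big],
\end{align*}
and to bound the bracketed term by $\varepsilon_h\,\Vert v_h\Vert_{H^2(I)^d}^2$ with $\varepsilon_h \to 0$. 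For this I would further split into $(\widetilde\lambda_h,|v_h'|^2)_{h,2} - (\widetilde\lambda_h,|v_h'|^2)$, handled by Lemma~\ref{lem:quadControl} applied with $\psi_h = \widetilde\lambda_h$ and $\Phi = |v_h'|^2$ (noting $D_h^4 v_h = 0$, so only finitely many derivative terms survive), and $(\widetilde\lambda_h - \lambda, |v_h'|^2)$, handled by an interpolation estimate $\Vert\widetilde\lambda_h - \lambda\Vert \le c h^2 \Vert\lambda\Vert_{H^2(I)}$ together with $\Vert v_h'\Vert_{L^\infty(I)^d} \le c\Vert v_h\Vert_{H^2(I)^d}$; in the quadrature term the powers of $h$ from Lemma~\ref{lem:quadControl} are absorbed against the inverse estimates $\Vert D_h v_h''\Vert \le c h^{-1}\Vert v_h\Vert_{H^2(I)^d}$ from Lemma~\ref{lemma:inverse_estimates}, leaving a net positive power of $h$.

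The second, and genuinely more delicate, step is that $v_h \in \ker B_{\widetilde u_h}$ is \emph{not} in $\ker B_u$, so the hypothesis $a_\lambda(v,v) \ge \alpha\Vert v\Vert_{H^2(I)^d}^2$ does not apply to $v_h$ directly. The remedy is to correct $v_h$ into the continuous kernel: following the construction used already for the continuous Euler--Lagrange equation, set $w_h := v_h - \int_a^x (u'\cdot v_h')\,u'\,d\sigma$, which lies in $\G(u)$, hence in $\ker B_u$. One then estimates $\Vert v_h - w_h\Vert_{H^2(I)^d}$ in terms of $\Vert u'\cdot v_h'\Vert$; since $v_h \in \ker B_{\widetilde u_h}$ means $(\mu_h, \widetilde u_h'\cdot v_h')_{h,2} = 0$ for all $\mu_h \in \S^{2,0}_0(\T_h)$, and since $\widetilde u_h' = \I_{h,3}(u)'$ differs from $u'$ by $O(h^2)$ in $L^2$ and $O(h)$ elementwise in higher norms, one gets $\Vert u'\cdot v_h'\Vert \le c h\,\Vert v_h\Vert_{H^2(I)^d}$ (the projection onto $\S^{2,0}_0(\T_h)$ of $u'\cdot v_h'$ is small, and $u'\cdot v_h'$ has enough regularity that its distance to that space is controlled). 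Consequently $\Vert v_h - w_h\Vert_{H^2(I)^d} \le c h\,\Vert v_h\Vert_{H^2(I)^d}$. Then
\begin{align*}
  a_\lambda(v_h,v_h)
  &= a_\lambda(w_h,w_h) + a_\lambda(v_h - w_h, v_h + w_h) \\
  &\ge \alpha\Vert w_h\Vert_{H^2(I)^d}^2 - c h\,\Vert v_h\Vert_{H^2(I)^d}^2 \\
  &\ge \alpha\Vert v_h\Vert_{H^2(I)^d}^2 - c' h\,\Vert v_h\Vert_{H^2(I)^d}^2,
\end{align*}
using boundedness of $a_\lambda$ and $\Vert w_h\Vert \ge \Vert v_h\Vert - ch\Vert v_h\Vert$.

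Combining the two steps gives $a_{\widetilde\lambda_h}(v_h,v_h) \ge (\alpha - C h)\Vert v_h\Vert_{H^2(I)^d}^2$ for a constant $C$ depending on $\Vert u\Vert_{H^4(I)^d}$ and $\Vert\lambda\Vert_{H^2(I)}$; choosing $h$ small enough that $Ch \le \alpha/2$ yields the claim. I expect the main obstacle to be the second step: one must carefully verify that membership in the discrete kernel $\ker B_{\widetilde u_h}$, defined through the perturbed direction $\widetilde u_h'$ and the lumped product $(\cdot,\cdot)_{h,2}$, forces $\Vert u'\cdot v_h'\Vert$ to be genuinely $O(h)$ and not merely $O(1)$ — this requires combining the interpolation error $\widetilde u_h' - u' = O(h^2)$ in $L^2$, an inverse estimate to move between the $L^2$ and $H^{-1}$ pairings, the quadrature control of Lemma~\ref{lem:quadControl}, and an approximation estimate for the $L^2$-distance of $u'\cdot v_h'$ to $\S^{2,0}_0(\T_h)$, with all the $h$-powers tracked so that the final bound is a positive power of $h$ and not $O(1)$.
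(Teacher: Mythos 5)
Your overall strategy is the same as the paper's: project $v_h$ into $\ker B_u$ via $w_h = v_h - \int_a^x (u'\cdot v_h')u'\,d\sigma$, show $\Vert v_h - w_h\Vert_{H^2(I)^d} = O(h)\Vert v_h\Vert_{H^2(I)^d}$, apply the continuous coercivity to $w_h$, and absorb the remaining quadrature and interpolation errors. Two details, however, are imprecise in a way that matters for making the $O(h)$ bound come out.

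First, the mechanism by which the discrete kernel condition controls $u'\cdot v_h'$. You describe it as the $L^2$-projection of $u'\cdot v_h'$ onto $\S^{2,0}_0(\T_h)$ being small, but this is not quite right and does not lead anywhere directly: the lumped pairing $(\mu_h,\widetilde u_h'\cdot v_h')_{h,2}$ is a weighted sum of nodal values, so $(\mu_h,\widetilde u_h'\cdot v_h')_{h,2}=0$ for all $\mu_h\in\S^{2,0}_0(\T_h)$ forces $\widetilde u_h'\cdot v_h'$ to vanish at every \emph{interior} node of $\N_2(\T_h)$; at the boundary nodes it vanishes because $v_h\in\S^{3,1}_D(\T_h)^d$ gives $v_h'\vert_{\partial I}=0$. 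The conclusion is the exact identity $\I_{h,2}(\widetilde u_h'\cdot v_h')=0$, which you can then insert into $u'\cdot v_h'$ for free, writing
$u'\cdot v_h' = (u'-\widetilde u_h')\cdot v_h' + (\mathrm{id}-\I_{h,2})(\widetilde u_h'\cdot v_h')$,
and estimating the two pieces by interpolation (first term $O(h^2)$) and interpolation plus an inverse estimate (second term $O(h^2)\cdot h^{-1}=O(h)$). Your proposed combination of interpolation error, inverse estimate and ``distance of $u'\cdot v_h'$ to $\S^{2,0}_0(\T_h)$'' points in the right direction but misses that the nodal interpolant vanishes \emph{identically}, which is the fact that lets this argument get started.

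Second, you claim $\Vert u'\cdot v_h'\Vert\le ch\Vert v_h\Vert_{H^2(I)^d}$ in $L^2$ and then conclude $\Vert v_h-w_h\Vert_{H^2(I)^d}\le ch\Vert v_h\Vert_{H^2(I)^d}$. But since $(v_h-w_h)''=(u'\cdot v_h')'\,u' + (u'\cdot v_h')\,u''$, the $H^2$ bound on $v_h-w_h$ requires an $H^1$ bound on $u'\cdot v_h'$ (or rather on $(u'\cdot v_h')\,u'$, as in the paper), not merely an $L^2$ bound. The same inserted-interpolant splitting above does give the needed $H^1$ control with the same $O(h)$ rate, so this is a fixable gap, but as written it is a missing step.
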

\begin{proof}
  Let $v_h \in \ker B_{\widetilde{u}_h}$. We define $v^h \in \ker B_u$ by
  \begin{equation*}
    v^h(x) := \int_a^x v_h' - (v_h' \cdot u') u' \ dx + v_h(a)
    = v_h - \int_a^x (v_h' \cdot u') u' \ dx.
  \end{equation*}
  Thus we get
  \begin{align*}
    a_{\widetilde{\lambda}_h}(v_h, v_h) 
    &= a_\lambda(v^h, v^h)
    + (a_{\widetilde{\lambda}_h}(v_h, v_h) - a_\lambda(v^h, v^h))\\
    &\geq \alpha \Vert v^h \Vert_{H^2(I)^d}^2
    - \vert a_{\widetilde{\lambda}_h}(v_h, v_h) - a_\lambda(v^h, v^h) \vert.
  \end{align*}
  For the first term we have
  \begin{align*}
    \alpha \Vert v^h \Vert_{H^2(I)^d}^2 
    &= \alpha \Vert v_h \Vert_{H^2(I)^d}^2
    - \alpha ( v_h - v^h , v_h + v^h )_{H^2(I)^d} \\
    &\geq \alpha \Vert v_h \Vert^2
    - \alpha \Vert v_h - v^h \Vert_{H^2(I)^d} \Vert v_h + v^h \Vert_{H^2(I)^d}.
  \end{align*}
  We note that $\Vert v_h + v^h \Vert_{H^2(I)^d} \leq \Vert v_h \Vert_{H^2(I)^d} 
  + \Vert v^h \Vert_{H^2(I)^d} \leq c \Vert v_h \Vert_{H^2(I)^d}
  (1 + \Vert u \Vert_{H^4(I)^d}^2)$.
  Also, since $v_h \in \ker B_{\widetilde{u}_h}$, we have that 
  $\I_{h,2}(v_h' \cdot \widetilde{u}_h') = 0$. As $v^h(a) = v_h(a)$, 
  we can use a  Poincaré inequality  on $v_h - v^h$ and obtain
  \begin{equation}\label{eq:projectionEstimate}
    \begin{aligned}
      \Vert v_h - v^h \Vert_{H^2(I)^d}
      &\leq c \Vert (v_h' \cdot u') u' \Vert_{H^1(I)^d}
      = c \Vert ((v_h' \cdot u') 
      - \I_{h,2} (v_h' \cdot \widetilde{u}_h') )u' \Vert_{H^1(I)^d} \\
      &\leq c \Vert ((v_h' \cdot u') - (v_h' \cdot \widetilde{u}_h'))u' \Vert_{H^1(I)^d}
      + c \Vert ((v_h' \cdot \widetilde{u}_h') 
      - \I_{h,2} (v_h' \cdot \widetilde{u}_h') )u' \Vert_{H^1(I)^d} \\
      &\leq c h^2 \Vert u \Vert_{H^4(I)^d}^2 \Vert v_h \Vert_{H^2(I)^d}^2
      + c h^2 \Vert u \Vert_{H^4(I)^d} \Vert D_h^3(v_h' \cdot \widetilde{u}_h')\Vert \\
      &\leq c h \Vert u \Vert_{H^4(I)^d}^2 \Vert v_h \Vert_{H^2(I)^d}.
    \end{aligned}
  \end{equation}
  This yields a bound on the first term:
  \begin{equation*}
    \label{eq:coerc_first_term}
    \alpha \Vert v^h \Vert_{H^2(I)^d}^2 
    \geq \alpha (1 - c_u h) \Vert v_h \Vert_{H^2(I)^d}^2.
  \end{equation*}
  For the second term we estimate
  \begin{align*}
    \vert a_{\widetilde{\lambda}_h}(v_h, v_h) - a_\lambda(v^h, v^h) \vert
    &\leq \vert (v_h'', v_h'') - ((v^h)'', (v^h)'') \vert
    + \vert (\widetilde{\lambda}_h, v_h' \cdot v_h')_{h,2}
    - (\lambda, (v^h)' \cdot (v^h)') \vert \\
    &= \Rom{4} + \Rom{5}.
  \end{align*}
   Term $\Rom{4}$  can easily be bound using the above estimates:
  \begin{equation*}\label{eq:IV}
    \Rom{4} = \vert (v_h'' - (v^h)'', v_h'' + (v^h)'') \vert
    \leq c h \Vert u \Vert_{H^4(I)^d}^2 \Vert v_h \Vert_{H^2(I)^d}^2
    (1 + \Vert u \Vert_{H^4(I)^d}^2).
  \end{equation*}
  For $\Rom{5}$, since $v_h \in \ker B_{\widetilde{u}_h} \subset \S^{3,1}_D(\T_h)^d$, we use again that $(\widetilde{\lambda}_h, v_h' \cdot v_h')_{h,2} = (\widetilde{\lambda}, v_h'
  \cdot v_h')_{h,2}$ for $\widetilde{\lambda} = \I_{h,2} \lambda$. Therefore we have
  \begin{align*}
    \Rom{5} 
    &\leq \vert (\widetilde{\lambda}, v_h' \cdot v_h')_{h,2}
    - (\widetilde{\lambda}, v_h' \cdot v_h') \vert
    + \vert (\widetilde{\lambda}, v_h' \cdot v_h') 
    - (\widetilde{\lambda}, (v^h)' \cdot (v^h)' ) \vert \\
    & \kurz+ \vert (\widetilde{\lambda}, (v^h)' \cdot (v^h)')
    - (\lambda, (v^h)' \cdot (v^h)' ) \vert
    =: \Rom{5}_1 + \Rom{5}_2 + \Rom{5}_3.
  \end{align*}
  Lemma \ref{lem:quadControl} and an inverse estimate imply
  \begin{align*}
    \Rom{5}_1
    &\leq c h^3 \int_I \vert D_h^3 (\widetilde{\lambda} v_h' \cdot v_h') \vert \ dx
    \leq c h \Vert \lambda \Vert_{H^2(I)} \Vert v_h \Vert_{H^2(I)^d}^2.
  \end{align*}
  For $\Rom{5}_2$ we can use (\ref{eq:projectionEstimate}) with the $H^1$ norm to obtain
  \begin{align*}
    \Rom{5}_2 
    \leq \Vert \widetilde{\lambda} \Vert \Vert v_h - v^h \Vert_{H^1(I)^d} 
    \Vert v_h + v^h \Vert_{H^2(I)^d}
    \leq c h^2 \Vert \lambda \Vert_{H^2(I)} \Vert u \Vert_{H^4(I)^d}^2 
    \Vert v_h \Vert_{H^2(I)^d}^2 (1 + \Vert u \Vert_{H^4(I)^d}^2).
  \end{align*}
  For $\Rom{5}_3$ we use an interpolation estimate to obtain
  \begin{align*}
    \Rom{5}_3 \leq \Vert \widetilde{\lambda} - \lambda \Vert
    \Vert (v^h)' \cdot (v^h)' \Vert
    \leq c h^2 \Vert \lambda \Vert_{H^2(I)} \Vert v_h \Vert_{H^2(I)^d}^2.
  \end{align*}
  Combined these estimates yield 
  $\Rom{5} \leq c_{u,\lambda} h \Vert v_h \Vert_{H^2(I)^d}^2$ and therefore
  \begin{align*}
    a_{\widetilde{\lambda}_h}(v_h, v_h)
    \geq \alpha (1 - c_u h) \Vert v_h \Vert_{H^2(I)^d}^2
    - c_{u,\lambda} h \Vert v_h \Vert_{H^2(I)^d}^2
    \geq \frac{\alpha}{2} \Vert v_h \Vert_{H^2(I)^d}^2
  \end{align*}
  for $h$ small enough.
\end{proof}

\begin{remark}
  The coercivity of $a_{\widetilde \lambda_h}$ on $\ker B_{\widetilde u_h}$ is another point where the $\P_1$ discretization of the inextensibility constraint can fail. If instead of $\I_{h,2}(\widetilde u_h' \cdot v_h') = 0$ we only  require  $\I_{h,1}(\widetilde u_h' \cdot v_h') = 0$ in $(\ref{eq:projectionEstimate})$ we get
  \begin{align*}
    \Vert v_h - v^h \Vert_{H^2(I)^d}
    &\leq c \Vert (v_h' \cdot (u' - \widetilde u_h')) u' \Vert_{H^1(I)^d}
      + c \Vert ((v_h' \cdot \widetilde u_h') - \I_{h,1}(v_h' \cdot \widetilde u_h')) u' \Vert_{H^1(I)^d} \\
    &\leq c h^2 \Vert u \Vert_{H^4(I)^d}^2 \Vert v_h \Vert_{H^2(I)^d}
      + c h \Vert u \Vert_{H^4(I)^d}^2 \Vert D_h^3 v_h \Vert
    \leq c \Vert u \Vert_{H^4(I)^d}^2 \Vert v_h \Vert_{H^2(I)^d},
  \end{align*}
  where in the last estimate we have used an inverse estimate.
  This ultimately yields
  \begin{align*}
    a_{\widetilde{\lambda}_h}(v_h, v_h)
    \geq \alpha (1 - c_u) \Vert v_h \Vert_{H^2(I)^d}^2
    - c_{u,\lambda} \Vert v_h \Vert_{H^2(I)^d}^2 ,
  \end{align*}
  which does not imply coercivity for $h$ small enough.
\end{remark}

In the proof of Lemma \ref{lem:coercivity} we have shown coercivity of
$a_{\widetilde{\lambda}_h}$ on $\ker B_{\widetilde{u}_h}$ by assuming that
the continuous bilinear form $a_{\lambda}$ is already coercive on
$\ker B_u$. We now discuss some settings where this assumption holds. \\
The coercivity assumption obviously holds for straight curves, since in this case we have $\lambda = - \vert u'' \vert^2 = 0$. 
Another important class of curves for which the coercivity assumption holds
are planar curves, i.e. curves $u: I \to \real^2$, as the following
proposition shows.

\begin{proposition}
  Let $I = [a,b] \subset \real$ an interval and  $u \in W^{2,\infty}(I)^2$  a curve
  satisfying $\vert u' \vert^2 = 1$. Further set $\lambda := - \vert u'' 
  \vert^2$ and define $G_u := \{v \in H^2(I)^d : u' \cdot v' = 0, v(a) = v'(a) = 0 \}  \supset \ker B_u $. Then $a_\lambda$ is
  coercive on $G_u$, i.e. there exists $\alpha > 0$ such that
  $a_\lambda(v,v) \geq \alpha \Vert v \Vert_{H^2(I)^d}^2$.
\end{proposition}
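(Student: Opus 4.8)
The plan is to use that for a planar arc-length parametrized curve the normal space at each point is one-dimensional, so that the constraint $u' \cdot v' = 0$ reduces an admissible variation to a single scalar unknown, and then to observe that the indefinite curvature term in $a_\lambda$ cancels exactly against part of $\|v''\|^2$. Concretely, I would introduce the moving frame $\{u', e\}$ along $u$ with $e := (u')^\perp \in W^{1,\infty}(I)^2$ the unit tangent rotated by $\pi/2$, using the fixed rotation $J$ with $J^2 = -\mathrm{id}$ on $\real^2$. Since $|u'|^2 = 1$ gives $u' \cdot u'' = 0$, we may write $u'' = \mu\, e$ with a scalar $\mu \in L^\infty(I)$, $\mu^2 = |u''|^2$, and therefore $e' = J u'' = \mu\, J e = -\mu\, u'$.

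For $v \in G_u$ the constraint $u' \cdot v' = 0$ together with $|u'| = 1$ forces $v' = \phi\, e$ with $\phi := v' \cdot e \in H^1(I)$, the regularity being clear from $u \in W^{2,\infty}(I)^2$ and $v \in H^2(I)^2$. The boundary condition $v'(a) = 0$ yields $\phi(a) = 0$. Differentiating once more, $v'' = \phi'\, e + \phi\, e' = \phi'\, e - \phi\mu\, u'$, and since $e$ and $u'$ are orthonormal we obtain the pointwise identities $|v'|^2 = \phi^2$ and $|v''|^2 = (\phi')^2 + \mu^2\phi^2 = (\phi')^2 + |u''|^2\phi^2$.

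Substituting into $a_\lambda(v,v) = \|v''\|^2 + (\lambda, v' \cdot v') = \|v''\|^2 - \int_I |u''|^2 |v'|^2 \ dx$, the two terms carrying $|u''|^2$ cancel and we are left with the identity $a_\lambda(v,v) = \|\phi'\|^2$. It then remains to bound $\|v\|_{H^2(I)^d}$ from above by $\|\phi'\|$: from $\phi(a) = 0$ a Poincaré inequality gives $\|\phi\| \leq c \|\phi'\|$, hence $\|v'\| = \|\phi\| \leq c \|\phi'\|$ and $\|v''\|^2 \leq (1 + c\|u''\|_{L^\infty(I)^2}^2) \|\phi'\|^2$; finally $v(a) = 0$ gives $\|v\| \leq c \|v'\|$. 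Collecting these estimates yields $a_\lambda(v,v) = \|\phi'\|^2 \geq \alpha \|v\|_{H^2(I)^d}^2$ with $\alpha > 0$ depending only on $|I|$ and $\|u''\|_{L^\infty(I)^2}$, which is the claim.

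The computation is elementary, and the only genuine point is the exact cancellation of the curvature term, which is precisely what fails in codimension larger than one: there $v'$ need not be proportional to a single normal field, and its second derivative acquires components not controlled by $\lambda$, so coercivity can genuinely be lost. Hence this frame-based argument is special to $d = 2$; the only bookkeeping one must be careful with is the $W^{1,\infty}$ regularity of $e$ and $\mu$, the resulting $H^1$ regularity of $\phi$, and the well-definedness of the trace $\phi(a)$.
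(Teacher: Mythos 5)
Your proposal is correct and follows essentially the same route as the paper: you decompose $v'$ in the normal direction $e = (u')^\perp$ (the paper calls this $u_\perp'$ and writes $v' = \gamma u_\perp'$), observe that the curvature term in $|v''|^2$ cancels exactly against $\lambda\,|v'|^2$ to leave $a_\lambda(v,v) = \|\phi'\|^2$, and then recover the full $H^2$ norm via Poincaré inequalities using $\phi(a) = 0$ and $v(a) = 0$. The only cosmetic difference is that you introduce $\mu$ with $u'' = \mu e$ and compute $e' = -\mu u'$ explicitly, while the paper uses $|u_\perp''| = |u''|$ directly; the identities and the resulting constant $\alpha$ are the same.
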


\begin{proof}
  Let $v \in G_u$ arbitrary. Since $u' \in \S^1$ and $u' \cdot v' = 0$, we
  can rewrite $v' = \gamma u_\perp'$ where $u_\perp'$ is a rotation of $u'$ by
  $\pi/2$ and $\gamma \in H^1(I)$ with $\gamma(a) = 0$. This yields
  \begin{align*}
    \vert v'' \vert^2
    &= \vert (\gamma u_\perp')' \vert^2
    = \vert \gamma' \vert^2 \vert u_\perp' \vert^2
    + \vert \gamma \vert^2 \vert u_\perp'' \vert^2 
    = \vert \gamma' \vert^2 + \vert \gamma \vert^2 \vert u'' \vert^2.
  \end{align*}
  
  From Hölder's inequality and a Poincaré inequality we thus obtain
  \begin{align*}
    \Vert v'' \Vert^2 = \int_I \vert \gamma' \vert^2 + \vert \gamma \vert^2 \vert u'' \vert^2 \ dx
    \leq \Vert \gamma' \Vert^2 + \Vert u'' \Vert_{L^\infty(I)}^2 \Vert \gamma \Vert^2 \leq (1 + c_P^2 \Vert u'' \Vert_{L^\infty(I)}^2) \Vert \gamma' \Vert^2 .
  \end{align*}
  
  Together with $\vert v' \vert^2 = \vert \gamma \vert^2$ this implies
  \begin{align*}
    a_\lambda(v,v) 
    &= \int_I \vert v'' \vert^2 - \vert u'' \vert^2 \vert v' \vert^2 \ dx
    = \int_I \vert \gamma' \vert^2 \ dx
    \geq (1 + c_P^2 \Vert u'' \Vert_{L^\infty(I)^d}^2)^{-1} 
    \Vert v'' \Vert_{L^2(I)^d}^2.
  \end{align*}
  Using another Poincaré inequality twice on $\Vert v \Vert_{H^2(I)^d}^2$
  therefore yields
  \begin{align*}
    \Vert v \Vert_{H^2(I)^d}^2
    \leq (1 + c_P^2 + c_P^4) \Vert v'' \Vert_{L^2(I)^d}^2.
  \end{align*}
  Thus the asserted inequality holds with $\alpha = (1 + c_P^2 \Vert u''
  \Vert_{L^\infty(I)^d}^2)^{-1} (1 + c_P^2 + c_P^4)^{-1}$.
\end{proof}

Next we have to show that $b_{\widetilde{u}_h}$ satisfies an inf-sup-condition.
We do this by constructing a function $v_h \in \S^{2,0}(\T_h)$ that satisfies
the second estimate from Lemma \ref{lem:Brezzi}. To construct this function we first
need some projections onto the finite-element-space.
Let $\widetilde{\Pi}_h: L^2(I) \to \S^{2,0}_0(\T_h)$ via
\begin{equation*}
  \widetilde{\Pi}_h v = \sum_{z \in \N_2 \setminus \Gamma_D'} 
  \frac{(v,\varphi_{z,2})_{L^2(I)^d}}{\beta_{z,2}}
  \varphi_{z,2}, \leer \beta_{z,2} = \int_I \varphi_{z,2} \ dx.
\end{equation*}
Here $\varphi_{z,2}$ is the nodal basis of $\S^{2,0}(\T_h)$.
Then we have that $(\widetilde{\Pi}_h v, w_h)_{h,2} = (v, w_h)$
 for all  $w_h \in \S^{2,0}_0(\T_h)$  and  $v \in L^2(I)$. 
Let $\Pi_h: L^2(I) \to \S^{2,0}_0(\T_h)$ be the
standard $L^2$ projection satisfying $(\Pi_h v, w_h) = (v, w_h) 
\ \forall w_h \in \S^{2,0}_0(\T_h), v \in L^2(I)$.
\begin{lemma}\label{lem:L2projection}
  For all $v \in H^1_0(I)$ the modified projection $\widetilde{\Pi}_h$ satisfies
  \begin{equation*}
    \Vert \widetilde{\Pi}_h v \Vert_{H^1(I)} \leq c \Vert v \Vert_{H^1(I)}.
  \end{equation*}
\end{lemma}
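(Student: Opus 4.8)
The plan is to bound the $H^1$-norm of $\widetilde{\Pi}_h v$ by comparing $\widetilde{\Pi}_h$ with the standard $L^2$-projection $\Pi_h$, whose $H^1$-stability on $H^1_0(I)$ is classical (it follows from $L^2$-boundedness together with an inverse estimate and the approximation properties of $\Pi_h$, using the quasi-uniformity assumption $h \le c h_i$). So the first step is to recall or cite $\Vert \Pi_h v \Vert_{H^1(I)} \le c \Vert v \Vert_{H^1(I)}$ for $v \in H^1_0(I)$. It then suffices to show $\Vert (\widetilde{\Pi}_h - \Pi_h) v \Vert_{H^1(I)} \le c \Vert v \Vert_{H^1(I)}$, and since $(\widetilde{\Pi}_h - \Pi_h)v \in \S^{2,0}_0(\T_h)$, an inverse estimate reduces this to showing $\Vert (\widetilde{\Pi}_h - \Pi_h) v \Vert \le c h \Vert v \Vert_{H^1(I)}$, i.e. an $O(h)$ bound on the difference in the $L^2$-norm.

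To get that $L^2$-estimate, I would test the difference against itself: writing $w_h := (\widetilde{\Pi}_h - \Pi_h)v \in \S^{2,0}_0(\T_h)$, the defining identities give
\begin{align*}
  \Vert w_h \Vert^2 &= (\widetilde{\Pi}_h v, w_h) - (\Pi_h v, w_h)
  = (\widetilde{\Pi}_h v, w_h) - (v, w_h) \\
  &= \big[(\widetilde{\Pi}_h v, w_h) - (\widetilde{\Pi}_h v, w_h)_{h,2}\big]
  + \big[(\widetilde{\Pi}_h v, w_h)_{h,2} - (v, w_h)\big].
\end{align*}
The second bracket vanishes by the very definition of $\widetilde{\Pi}_h$, namely $(\widetilde{\Pi}_h v, w_h)_{h,2} = (v,w_h)$ for all $w_h \in \S^{2,0}_0(\T_h)$. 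The first bracket is a quadrature error between the exact and lumped $L^2$-products of two elementwise-$\mathcal P_2$ functions, which by Lemma~\ref{lem:quadControl} (or a direct elementwise application of the interpolation estimate of Lemma~\ref{lemma:interpolation_estimate} to $\I_{h,2}(\widetilde{\Pi}_h v\, w_h)$) is bounded by $c h^2 (\Vert \widetilde{\Pi}_h v \Vert_{H^1} \Vert w_h'' \Vert + \dots)$; after invoking inverse estimates to absorb the extra derivatives one arrives at a bound of the form $c h \Vert \widetilde{\Pi}_h v \Vert \, \Vert w_h \Vert$ or, cleaning up, $c h \Vert \widetilde{\Pi}_h v\Vert_{H^1}\Vert w_h\Vert$. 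Dividing through by $\Vert w_h \Vert$ gives $\Vert w_h \Vert \le c h \Vert \widetilde{\Pi}_h v \Vert_{H^1(I)}$, hence also $\Vert w_h \Vert_{H^1} \le c \Vert \widetilde{\Pi}_h v\Vert_{H^1}$ by an inverse estimate.

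The one loose end is that this bound is stated in terms of $\Vert \widetilde{\Pi}_h v \Vert_{H^1}$ rather than $\Vert v \Vert_{H^1}$, so I would close the loop by a short bootstrap: $\Vert \widetilde{\Pi}_h v \Vert_{H^1} \le \Vert \Pi_h v \Vert_{H^1} + \Vert w_h \Vert_{H^1} \le c \Vert v \Vert_{H^1} + c\, h^{?}\Vert \widetilde\Pi_h v\Vert_{H^1}$; for the estimate to close one wants the coefficient of $\Vert \widetilde\Pi_h v\Vert_{H^1}$ on the right to be small, which it is provided the $O(h)$ in the $L^2$-difference survives the single inverse estimate with a genuinely positive power of $h$ — this is where the quasi-uniformity of $\T_h$ is essential. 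Alternatively, and more cleanly, one can bound $\Vert \widetilde\Pi_h v\Vert$ directly: $\Vert\widetilde\Pi_h v\Vert^2 = (\widetilde\Pi_h v,\widetilde\Pi_h v)$, compare with $(\widetilde\Pi_h v,\widetilde\Pi_h v)_{h,2}=(v,\widetilde\Pi_h v)\le\Vert v\Vert\,\Vert\widetilde\Pi_h v\Vert$ using norm equivalence $\Vert\cdot\Vert_{h,2}\sim\Vert\cdot\Vert$ on $\S^{2,0}(\T_h)$, giving $\Vert\widetilde\Pi_h v\Vert\le c\Vert v\Vert$ outright, and then feed this into the difference estimate. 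The main obstacle is thus purely technical bookkeeping of the inverse-estimate powers so that the $h$-dependence in the quadrature error is not entirely eaten up; there is no conceptual difficulty, since $H^1$-stability of $\Pi_h$ is standard and $\widetilde\Pi_h$ differs from it only by a quadrature perturbation that is higher order on the quasi-uniform mesh.
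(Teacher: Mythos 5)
Your decomposition is close to the paper's, but testing the difference $w_h := \widetilde\Pi_h v - \Pi_h v$ against itself in the \emph{exact} $L^2$-inner product introduces a circularity that your proposed fixes do not actually break. After the quadrature lemma and inverse estimates you obtain (correctly) a bound of the form
\begin{equation*}
  \Vert w_h \Vert \leq c\,h\, \Vert (\widetilde\Pi_h v)' \Vert ,
\end{equation*}
and hence, after one inverse estimate, $\Vert w_h' \Vert \leq c\, \Vert (\widetilde\Pi_h v)' \Vert$ with a constant $c$ that is \emph{not} small. Plugging this into $\Vert (\widetilde\Pi_h v)' \Vert \leq \Vert (\Pi_h v)' \Vert + \Vert w_h' \Vert$ gives $\Vert (\widetilde\Pi_h v)' \Vert \leq \Vert (\Pi_h v)' \Vert + c \Vert (\widetilde\Pi_h v)' \Vert$, which only closes if $c < 1$, and there is no reason for the constant coming from Lemma~\ref{lem:quadControl} and the inverse estimates to be smaller than one. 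Your alternative route, bounding $\Vert \widetilde\Pi_h v \Vert$ in $L^2$ via the norm equivalence, does give $L^2$-stability, but feeding that into the $L^2$-difference estimate only recovers $\Vert w_h' \Vert \leq c h^{-1}\Vert v\Vert$, which does not yield the $H^1$-bound either. So the ``loose end'' you flag is a genuine gap, not bookkeeping.

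The paper avoids this cleanly by testing in the \emph{lumped} inner product: by the norm equivalence on $\S^{2,0}(\T_h)$ (Lemma~\ref{lemma:norm_equivalence}), $c^{-1}\Vert \delta_h\Vert^2 \leq \Vert \delta_h\Vert_{h,2}^2 = (\delta_h,\delta_h)_{h,2}$, and then the defining properties of $\widetilde\Pi_h$ and $\Pi_h$ collapse this to $(\delta_h, \Pi_h v) - (\delta_h, \Pi_h v)_{h,2}$, i.e.\ a quadrature error in the \emph{known} quantity $\Pi_h v$, not in $\widetilde\Pi_h v$. Lemma~\ref{lem:quadControl} and inverse estimates then give $\Vert \delta_h\Vert \leq c h \Vert (\Pi_h v)'\Vert$, and the lemma follows from the standard $H^1$-stability of $\Pi_h$ on the quasi-uniform mesh. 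In fact your $\Vert w_h\Vert^2 = (\widetilde\Pi_h v, w_h) - (\widetilde\Pi_h v, w_h)_{h,2}$ is algebraically equivalent to $\Vert w_h\Vert_{h,2}^2 = (\Pi_h v, w_h) - (\Pi_h v, w_h)_{h,2}$ (substitute $\widetilde\Pi_h v = \Pi_h v + w_h$ and move the $(w_h,w_h) - (w_h,w_h)_{h,2}$ term to the left), so you were one substitution away from the correct non-circular form.
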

\begin{proof}
  Define $\delta_h := \widetilde{\Pi}_h v - \Pi_h v$. Lemma \ref{lemma:norm_equivalence}
  therefore implies
  \begin{align*}
    c \Vert \delta_h \Vert^2 \leq \Vert \delta_h \Vert_{h,2}^2
    = (\delta_h, \widetilde{\Pi}_h v - \Pi_h v)_{h,2}
    = (\delta_h, \Pi_h v) - (\delta_h, \Pi_h v)_{h,2}.
  \end{align*}
  Lemma \ref{lem:quadControl} then yields
  \begin{align*}
    \Vert \delta_h \Vert^2
    &\leq c h^3 (\Vert \delta_h \Vert \Vert D_h^3 (\Pi_h v) \Vert
    + \Vert D_h \delta_h \Vert \Vert D_h^2 (\Pi_h v) \Vert
    + \Vert D_h^2 \delta_h \Vert \Vert D_h (\Pi_h v) \Vert) \\
    &\leq c h \Vert \delta_h \Vert \Vert D_h (\Pi_h v) \Vert.
  \end{align*}
  We therefore obtain $\Vert \delta_h \Vert \leq c h \Vert (\Pi_h v)' \Vert$.
  An inverse estimate yields $\Vert \delta_h' \Vert \leq c \Vert (\Pi_h v)' \Vert$.
  The asserted inequality then follows from the $H^1$-stability of the standard
  projection $\Pi_h$  on quasi-uniform meshes.
\end{proof}

Now we show that the second condition of Lemma \ref{lem:Brezzi} holds.

\begin{lemma}[Inf-Sup-Condition]\label{lem:infsup}
  There exists $\beta > 0$ such that for all $\mu_h \in \S^{2,0}_0(\T_h)$ there exists $v_h \in \S^{3,1}_D(\T_h)^d \setminus\{0\}$ such that
  \begin{align*}
    b_{\widetilde u_h}(\mu_h, v_h) \geq \beta \Vert \mu_h \Vert_{H^1_0(I)'} \Vert v_h \Vert_{H^2(I)^d}.
  \end{align*}
\end{lemma}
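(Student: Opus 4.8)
The plan is to verify the inf-sup condition by an explicit construction, starting from the inf-sup condition for the continuous form $b_u$ and transferring it to the discrete level. Since $u$ is a fixed regular solution with $|u'|^2=1$ and $\lambda=-|u''|^2\in H^2(I)$, the continuous form $b_u(v,\mu)=(\mu,u'\cdot v')$ is known to satisfy a continuous inf-sup condition: for every $\mu\in H^{-1}(I)$ there is $v\in H^2_D(I)^d$ with $b_u(v,\mu)\geq\beta_0\|\mu\|_{H^{-1}(I)}\|v\|_{H^2(I)^d}$. (Concretely, given $\mu$, one lets $\phi\in H^1_0(I)$ solve $-\phi''=\mu$, sets $\psi=\I_{h,2}$-free primitive so that $v'=\phi\,u'$, i.e. $v(x)=\int_a^x\phi\,u'\,d\sigma$; then $u'\cdot v'=\phi|u'|^2=\phi$, and $b_u(v,\mu)=(\mu,\phi)=\|\phi\|_{H^1_0(I)}^2\simeq\|\mu\|_{H^{-1}(I)}^2$, while $\|v\|_{H^2(I)^d}\leq c(1+\|u\|_{W^{2,\infty}})\|\phi\|_{H^1(I)}$ since $v''=\phi'u'+\phi u''$.)

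Next I would discretize this construction. Given $\mu_h\in\S^{2,0}_0(\T_h)$, regard it as an element of $H^{-1}(I)$ and let $\phi\in H^1_0(I)$ solve $-\phi''=\mu_h$. Now I cannot use $\phi u'$ directly since its primitive need not lie in $\S^{3,1}_D(\T_h)^d$, so instead I set $v_h(x):=\int_a^x \I_{h,2}(\widetilde\Pi_h\phi\cdot\widetilde u_h')\,d\sigma$, which lies in $\S^{3,1}_D(\T_h)^d$ because the integrand is elementwise $\P_2$; the right boundary condition $v_h'(b)=0$ must be checked, which holds because $\widetilde\Pi_h\phi$ vanishes at $\partial I$ (it maps into $\S^{2,0}_0$) and then $v_h'(b)=\I_{h,2}(\widetilde\Pi_h\phi\cdot\widetilde u_h')(b)=0$. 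Then $v_h'=\I_{h,2}(\widetilde\Pi_h\phi\cdot\widetilde u_h')$, so using the defining property of $\widetilde\Pi_h$, namely $(\widetilde\Pi_h w,z_h)_{h,2}=(w,z_h)$ for $z_h\in\S^{2,0}_0(\T_h)$, one computes
\begin{align*}
  b_{\widetilde u_h}(v_h,\mu_h)
  = (\mu_h, \widetilde u_h'\cdot v_h')_{h,2}
  = (\mu_h, \widetilde u_h'\cdot \I_{h,2}(\widetilde\Pi_h\phi\cdot \widetilde u_h'))_{h,2}.
\end{align*}
The main computation is to show this equals $\|\phi\|_{H^1_0(I)}^2$ up to a term that is $O(h)\|\mu_h\|_{H^{-1}(I)}^2$: replace $\widetilde u_h'$ by $u'$ (interpolation estimate, cost $O(h^2)$), use $|u'|^2=1$ and $\widetilde u_h'\cdot\I_{h,2}(\cdots)\approx\I_{h,2}(\widetilde\Pi_h\phi)$ via the quadrature control of Lemma~\ref{lem:quadControl}, then use $(\mu_h,\widetilde\Pi_h\phi)_{h,2}=(\mu_h,\phi)$ if I arrange $\widetilde\Pi_h$ on the correct side, or more carefully $(\mu_h,\I_{h,2}(\widetilde\Pi_h\phi))_{h,2}=(\mu_h,\widetilde\Pi_h\phi)_{h,2}=(\mu_h,\phi)=\|\phi\|_{H^1_0}^2\simeq\|\mu_h\|_{H^{-1}(I)}^2$.

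For the denominator I would bound $\|v_h\|_{H^2(I)^d}$: since $v_h'=\I_{h,2}(\widetilde\Pi_h\phi\cdot\widetilde u_h')$, the interpolation and inverse estimates of Lemmas~\ref{lemma:interpolation_estimate} and \ref{lemma:inverse_estimates}, together with the $H^1$-stability of $\widetilde\Pi_h$ from Lemma~\ref{lem:L2projection} and $\|D_h^k\widetilde u_h\|\leq c\|u\|_{H^4(I)^d}$, give $\|v_h\|_{H^2(I)^d}\leq c(1+\|u\|_{H^4(I)^d})\|\phi\|_{H^1(I)}\leq c_u\|\mu_h\|_{H^{-1}(I)}$. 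Combining, $b_{\widetilde u_h}(v_h,\mu_h)\geq (c_1-c_2h)\|\mu_h\|_{H^{-1}(I)}^2\geq \tfrac{c_1}{2}\|\mu_h\|_{H^{-1}(I)}^2\geq \beta\|\mu_h\|_{H^{-1}(I)}\|v_h\|_{H^2(I)^d}$ for $h$ small enough, with $\beta$ independent of $h$. The main obstacle I anticipate is the careful bookkeeping of the three nested approximations (replacing $\widetilde u_h'$ by $u'$, removing the inner $\I_{h,2}$, and removing the lumped product $(\cdot,\cdot)_{h,2}$) so that each error term carries an $H^{-1}$-norm of $\mu_h$ rather than an $L^2$- or $H^1$-norm — this is exactly where the $\P_2$ constraint is essential, since the extra order in Lemma~\ref{lem:quadControl} is what turns $O(h^2)\|\mu_h\|_{H^1(I)'}$-type terms (after an inverse estimate $\|\mu_h\|\leq ch^{-1}\|\mu_h\|_{H^{-1}}$) into genuinely small $O(h)$ contributions; verifying $v_h\neq 0$ (which follows since otherwise $b_{\widetilde u_h}(v_h,\mu_h)=0$ forces $\mu_h=0$, and the case $\mu_h=0$ is trivial) is a minor point handled at the end.
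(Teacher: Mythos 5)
The construction of the test function is exactly the paper's: you take $\phi$ achieving the dual norm (whether by Hahn--Banach or by Riesz/solving $-\phi''=\mu_h$ is cosmetic), set $v_h(x)=\int_a^x\I_{h,2}(\widetilde\Pi_h\phi\cdot\widetilde u_h')\,d\sigma$, and use the defining property $(\mu_h,\widetilde\Pi_h\phi)_{h,2}=(\mu_h,\phi)$. That part matches. But the error accounting you sketch has a genuine gap, and it is precisely the step you flag as the anticipated obstacle.

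You propose to pass from $(\mu_h,\widetilde u_h'\cdot v_h')_{h,2}$ to $(\mu_h,\phi)$ by a chain that at some point ``removes the lumped product'' via Lemma~\ref{lem:quadControl}. That lemma is not the right tool here. Applied with $\psi_h=\mu_h$ and $\Phi=\widetilde u_h'\cdot v_h'$, it gives a bound of the form $ch^3\bigl(\Vert\mu_h\Vert\Vert D_h^3\Phi\Vert+\Vert\mu_h'\Vert\Vert D_h^2\Phi\Vert+\Vert D_h\mu_h'\Vert\Vert\Phi'\Vert\bigr)$; after converting each factor of $\mu_h$ to $\Vert\mu_h\Vert_{H^1_0(I)'}$ via the inverse estimates (costing $h^{-1}$, $h^{-2}$, $h^{-3}$ respectively), the last term loses all powers of $h$ and yields $c\Vert\mu_h\Vert_{H^1_0(I)'}\Vert\Phi'\Vert=O(1)\Vert\mu_h\Vert_{H^1_0(I)'}\Vert u\Vert_{H^4}\Vert v_h\Vert_{H^2}$. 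That is not small, so the correction term does not vanish as $h\to 0$, and the inf-sup constant you would obtain is not bounded away from zero. Your estimate of the cost of the inverse estimate as ``$\Vert\mu_h\Vert\leq ch^{-1}\Vert\mu_h\Vert_{H^{-1}}$'' tracks only the lowest-order term and misses this.

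The paper avoids this entirely by never leaving the nodal world: $(\mu_h,\widetilde u_h'\cdot v_h')_{h,2}$ is a finite sum over the quadrature nodes $\N_2(\T_h)$, and at each node $z$, $\widetilde u_h'(z)\cdot v_h'(z)=\widetilde\Pi_h\phi(z)\,\vert\widetilde u_h'(z)\vert^2$ exactly (since $v_h'=\I_{h,2}(\widetilde\Pi_h\phi\cdot\widetilde u_h')$ agrees with its argument at nodes). The key structural fact, absent from your sketch, is that $\vert\widetilde u_h'(z)\vert^2=\vert u'(z)\vert^2=1$ \emph{exactly} at the element endpoints $z\in\N_1(\T_h)$, because $\I_{h,3}$ is the Hermite interpolant and matches $u'$ there. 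Thus the entire correction comes from the midpoints $m_i$, where $1-\vert\widetilde u_h'(m_i)\vert^2=O(h^2)$, and summing with the weights $\vert\beta_{m_i,2}\vert\leq h$ and $\Vert\mu_h\Vert_{L^\infty}\leq ch^{-3/2}\Vert\mu_h\Vert_{H^1_0(I)'}$ gives an $O(h^{1/2})\Vert\mu_h\Vert_{H^1_0(I)'}$ correction -- small for $h$ small (and slightly worse than the $O(h)$ you predicted, but still sufficient). Without this nodal exactness the argument does not close.
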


\begin{proof}
  Let $\mu_h \in \S^{2,0}_0(\T_h)$ be arbitrary. Since $\S^{2,0}_0(\T_h) \hookrightarrow 
  H^1_0(I)'$,
  the Hahn-Banach theorem implies
  \begin{align*}
    \exists \phi \in H^1_0(I):\ \Vert \phi \Vert_{H^1(I)} = 1, \ 
    (\mu_h, \phi) = \Vert \mu_h \Vert_{H^1_0(I)'}.
  \end{align*}
  We now define $v_h \in \S^{3,1}_D(\T_h)$ via
  \begin{align*}
    v_h(x) := \int_a^x \I_{h,2}  ((\widetilde{\Pi}_h \phi) \widetilde{u}_h')  \ dx.
  \end{align*}
  Testing with $v_h$ and $\mu_h$ yields
  \begin{align*}
    b_{\widetilde{u}_h}(v_h,\mu_h)
    &= (\mu_h, \widetilde{u}_h' \cdot v_h')_{h,2}
    = \sum_{z \in \N_2} \mu_h(z) \widetilde{\Pi}_h \phi(z) \widetilde{u}_h'(z) 
    \cdot \widetilde{u}_h'(z) \beta_{h,2} \\
    &= \Vert \mu_h \Vert_{H^1_0(I)'}
    - \sum_{i=1}^M \mu_h(m_i) \widetilde{\Pi}_h \phi(m_i) \beta_{m_i,2}
    (1 - \vert \widetilde{u}_h'(m_i) \vert^2).
  \end{align*}
  For the second term, using $1 = \vert u' \vert^2$ we get
  \begin{align*}
    &\sum_{i=1}^M \mu_h(m_i) \widetilde{\Pi}_h \phi(m_i) \beta_{m_i,2}
    (1 - \vert \widetilde{u}_h'(m_i) \vert^2)\\
    &\kurz \leq \sum_{i=1}^M \Vert \mu_h \Vert_{L^\infty(I)} 
    \Vert \widetilde{\Pi}_h \phi \Vert_{L^\infty(I)}
    \vert \beta_{m_i,2} \vert \Vert u' - \widetilde{u}_h'\Vert_{L^\infty(I)^d}
    \Vert u' + \widetilde{u}_h' \Vert_{L^\infty(I)^d}.
  \end{align*}
  With $\vert \beta_{m_i,2} \vert \leq h$, $\Vert \cdot \Vert_{L^\infty(I)}
  \leq c  h^{-3/2}  \Vert \cdot \Vert_{H^1_0(I)'}$ on $\S^{2,0}_0(\T_h)$ and 
  $\Vert \cdot \Vert_{L^\infty(I)^d} \leq c \Vert \cdot \Vert_{H^1(I)^d}$ we obtain
  \begin{align*}
    \sum_{i=1}^M \mu_h(m_i) \widetilde{\Pi}_h \phi(m_i) \beta_{m_i,2}
    (1 - \vert \widetilde{u}_h'(m_i) \vert^2)
    &  \leq c h^{3/2} \sum_{i=1}^M \Vert \mu_h \Vert_{H^1_0(I)'} 
    \Vert \widetilde{\Pi}_h \phi \Vert_{H^1(I)}
    \Vert u \Vert_{H^4(I)^d}^2 \\
    &\leq c  h^{1/2}  \Vert \mu_h \Vert_{H^1_0(I)'} 
    \Vert u \Vert_{H^4(I)^d}^2.
  \end{align*}
  Therefore we have
  \begin{equation}\label{eq:b_estimate}
    b_{\widetilde{u}_h}(\mu_h, v_h)
    \geq (1 - c h^{1/2} \Vert u \Vert_{H^4(I)^d}^2) 
    \Vert \mu_h \Vert_{H^1_0(I)'}
    \geq \frac{1}{2} \Vert \mu_h \Vert_{H^1_0(I)'}
  \end{equation}
  for $h$ small enough. On the other side for $v_h$ the estimate
  \begin{align*}
    \Vert v_h \Vert_{H^2(I)^d}
    &\leq c \Vert \I_{h,2}((\widetilde{\Pi}_h \phi) \widetilde{u}_h') \Vert_{H^1(I)^d}
    \leq c \Vert u \Vert_{H^4(I)^d}
  \end{align*}
  holds. This yields $c^{-1} \Vert v_h \Vert_{H^2(I)^d} \Vert u \Vert_{H^4(I)^d}^{-1}
  \leq 1$
  and inserting this estimate into (\ref{eq:b_estimate}) implies
  \begin{align*}
    \frac{1}{2} c^{-1} \Vert v_h \Vert_{H^2(I)^d} \Vert u \Vert_{H^4(I)^d}^{-1} 
    \Vert \mu_h \Vert_{H^1_0(I)'}
    \leq b_{\widetilde{u}_h}(\mu_h, v_h),
  \end{align*}
  which is the asserted estimate.
\end{proof}

It remains to show that $DF_h$ is Lipschitz continuous in a neighbourhood of $(\widetilde u_h, \widetilde \lambda_h)$. This is established trough the following lemma.

\begin{lemma}[Lipschitz estimate]\label{lem:LipschitzEstimate}
  For all $(u_{h,1}, \lambda_{h,1}), (u_{h,2}, \lambda_{h,2}) \in X_h$ we have
  \begin{equation*}
    \Vert DF_h(u_{h,1}, \lambda_{h,1})
    - DF_h(u_{h,2}, \lambda_{h,2}) \Vert_{L(X_h, X_h')}
    \leq c \Vert (u_{h,1} - u_{h,2}, \lambda_{h,1} - \lambda_{h,2}) \Vert_{X_h}.
  \end{equation*}
\end{lemma}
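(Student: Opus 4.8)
First I would observe that the term $(v_h'',w_h'')$ in $DF_h(u_h,\lambda_h)[(v_h,\mu_h),(w_h,\eta_h)]$ does not depend on $(u_h,\lambda_h)$ and hence cancels in the difference, so that with $\xi_h:=u_{h,1}-u_{h,2}\in\S^{3,1}_D(\T_h)^d$ and $\psi_h:=\lambda_{h,1}-\lambda_{h,2}\in\S^{2,0}_0(\T_h)$ we have, for all $(v_h,\mu_h),(w_h,\eta_h)\in X_h$,
\begin{align*}
  &\big(DF_h(u_{h,1},\lambda_{h,1}) - DF_h(u_{h,2},\lambda_{h,2})\big)[(v_h,\mu_h),(w_h,\eta_h)] \\
  &\quad = (\psi_h, v_h'\cdot w_h')_{h,2}
  + (\mu_h, \xi_h'\cdot w_h')_{h,2}
  + (\eta_h, \xi_h'\cdot v_h')_{h,2}.
\end{align*}
Since the left-hand side is bilinear in $(v_h,\mu_h)$ and $(w_h,\eta_h)$, the operator norm is the supremum of its absolute value over the unit balls of $X_h$, and since $\Vert v_h\Vert_{H^2(I)^d}+\Vert\mu_h\Vert_{H^{-1}(I)}\le\sqrt{2}\,\Vert(v_h,\mu_h)\Vert_{X_h}$ (and similarly for $(w_h,\eta_h)$), it suffices to establish the single auxiliary estimate
\begin{align*}
  \big|(\chi_h,\zeta_{h,1}'\cdot\zeta_{h,2}')_{h,2}\big|
  \le c\,\Vert\chi_h\Vert_{H^{-1}(I)}\,\Vert\zeta_{h,1}\Vert_{H^2(I)^d}\,\Vert\zeta_{h,2}\Vert_{H^2(I)^d}
\end{align*}
for all $\chi_h\in\S^{2,0}_0(\T_h)$ and $\zeta_{h,1},\zeta_{h,2}\in\S^{3,1}_D(\T_h)^d$, and then to apply it with $(\chi_h,\zeta_{h,1},\zeta_{h,2})$ equal to $(\psi_h,v_h,w_h)$, $(\mu_h,\xi_h,w_h)$ and $(\eta_h,\xi_h,v_h)$, respectively.

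To prove the auxiliary estimate, set $\Phi:=\zeta_{h,1}'\cdot\zeta_{h,2}'$. Since every $\zeta_h\in\S^{3,1}_D(\T_h)^d\subset H^2_D(I)^d$ satisfies $\zeta_h'|_{\del I}=0$, the function $\Phi$ vanishes on $\del I$, and being a product of two $H^1(I)$-functions in one space dimension it lies in $H^1_0(I)$ with $\Vert\Phi\Vert_{H^1(I)}\le c\,\Vert\zeta_{h,1}\Vert_{H^2(I)^d}\Vert\zeta_{h,2}\Vert_{H^2(I)^d}$ by the product rule, Hölder's inequality and the embedding $H^1(I)\embeds L^\infty(I)$. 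Splitting $(\chi_h,\Phi)_{h,2}=(\chi_h,\Phi)+\big((\chi_h,\Phi)_{h,2}-(\chi_h,\Phi)\big)$, the first summand is bounded by $\Vert\chi_h\Vert_{H^{-1}(I)}\Vert\Phi\Vert_{H^1(I)}$ by duality, using $\S^{2,0}_0(\T_h)\embeds H^{-1}(I)$ and $\Phi\in H^1_0(I)$. The second summand is precisely the setting of Lemma \ref{lem:quadControl}, which bounds it by $ch^3$ times $\Vert\chi_h\Vert\,\Vert D_h^3\Phi\Vert+\Vert\chi_h'\Vert\,\Vert D_h^2\Phi\Vert+\Vert D_h\chi_h'\Vert\,\Vert\Phi'\Vert$. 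Here I would use the inverse estimates of Lemma \ref{lemma:inverse_estimates}, namely $\Vert D_h^k\chi_h\Vert\le ch^{-(k+1)}\Vert\chi_h\Vert_{H^{-1}(I)}$ for $k\le2$, together with $\Vert D_h^3\zeta_h\Vert\le ch^{-1}\Vert\zeta_h\Vert_{H^2(I)^d}$ and the elementwise bounds $\Vert\zeta_h''\Vert_{L^\infty(I)}\le ch^{-1/2}\Vert\zeta_h''\Vert$, $\Vert\zeta_h'''\Vert_{L^\infty(I)}\le ch^{-3/2}\Vert\zeta_h''\Vert$; using in addition $\zeta_h''''=0$, the product rule and Hölder's inequality gives $\Vert\Phi'\Vert\le c\Vert\zeta_{h,1}\Vert_{H^2(I)^d}\Vert\zeta_{h,2}\Vert_{H^2(I)^d}$, $\Vert D_h^2\Phi\Vert\le ch^{-1}\Vert\zeta_{h,1}\Vert_{H^2(I)^d}\Vert\zeta_{h,2}\Vert_{H^2(I)^d}$ and $\Vert D_h^3\Phi\Vert\le ch^{-3/2}\Vert\zeta_{h,1}\Vert_{H^2(I)^d}\Vert\zeta_{h,2}\Vert_{H^2(I)^d}$. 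Then the three contributions of Lemma \ref{lem:quadControl} scale like $h^{1/2}$, $h^0$ and $h^0$, so (for $h$ bounded) the quadrature remainder is also $\le c\,\Vert\chi_h\Vert_{H^{-1}(I)}\Vert\zeta_{h,1}\Vert_{H^2(I)^d}\Vert\zeta_{h,2}\Vert_{H^2(I)^d}$, which completes the auxiliary estimate.

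Summing the three applications of the auxiliary estimate and taking the supremum over the unit balls then yields the asserted Lipschitz bound, with a constant that is in fact independent of $h$ (consistent with the absence of a smallness assumption in the statement). I expect the only genuinely delicate point to be the power counting in the quadrature remainder: one must ensure that the factor carrying the negative-order norm is always $\chi_h$, so that the powers $h^{-1},h^{-2},h^{-3}$ from the inverse estimates on $\chi_h$ (and the $h^{-1/2},h^{-3/2}$ from the elementwise inverse estimates on the second derivatives of $\zeta_{h,1},\zeta_{h,2}$) are exactly absorbed by the $h^3$ prefactor of Lemma \ref{lem:quadControl} rather than overshooting it. Everything else reduces to the same combination of product rule, Hölder's inequality, Sobolev embedding and inverse estimates already employed in the proofs of Lemma \ref{lem:boundedness} and Lemma \ref{lem:coercivity}.
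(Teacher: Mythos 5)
Your proof is correct and follows essentially the same path as the paper's: you split each of the three bilinear contributions into the exact $L^2$-pairing (bounded by duality in $H^{-1}\times H^1_0$) plus a quadrature remainder, and you control the remainder by Lemma~\ref{lem:quadControl} together with the inverse estimates on $\S^{2,0}_0(\T_h)$ and $\S^{3,1}_D(\T_h)^d$. The only cosmetic difference is that you factor out a single auxiliary estimate and apply it three times (also silently correcting a typo: the paper writes $(u_{h,1}-u_{h,2})\cdot w_h$ where the derivatives $(u_{h,1}'-u_{h,2}')\cdot w_h'$ are meant), whereas the paper treats the first term in detail and declares the other two analogous; your power-counting $h^{1/2}$, $h^0$, $h^0$ for the three quadrature contributions is consistent with what the paper's estimates yield.
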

\begin{proof}
  Let $(v_h, \mu_h), (w_h, \eta_h) \in X_h$ arbitrary. We then have
  \begin{align*}
    &\vert (DF_h(u_{h,1}, \lambda_{h,1})
    - DF_h(u_{h,2}, \lambda_{h,2}))[(v_h,\mu_h),(w_h, \eta_h)] \vert \\
    &\kurz \leq \vert (\lambda_{h,1} - \lambda_{h,2}, v_h' \cdot w_h')_{h,2} \vert
    + \vert (\mu_h, (u_{h,1} - u_{h,2}) \cdot w_h)_{h,2} \vert
    + \vert (\eta_h, (u_{h,1} - u_{h,2}) \cdot v_h)_{h,2} \vert.
  \end{align*}
  With Lemma \ref{lem:quadControl} and inverse norm estimates we get for the first term
  \begin{align*}
    &\vert (\lambda_{h,1} - \lambda_{h,2}, v_h' \cdot w_h')_{h,2} \vert\\
    &\kurz \leq \vert (\lambda_{h,1} - \lambda_{h,2}, v_h' \cdot w_h') \vert
    + \vert (\lambda_{h,1} - \lambda_{h,2}, v_h' \cdot w_h')_{h,2}
    - (\lambda_{h,1} - \lambda_{h,2}, v_h' \cdot w_h') \vert \\
    &\kurz \leq \Vert \lambda_{h,1} - \lambda_{h,2} \Vert_{H^1_0(I)'}
    \Vert v_h \Vert_{H^2(I)^d} \Vert w_h \Vert_{H^2(I)^d}
    + c h^3 \Vert \lambda_{h,1} - \lambda_{h,2} \Vert 
    \Vert D_h^3(v_h' \cdot w_h') \Vert\\
    &\leer + c h^3 \Vert D_h(\lambda_{h,1} - \lambda_{h,2}) \Vert
    \Vert D_h^2( v_h' \cdot w_h' ) \Vert
    + c h^3 \Vert D_h^2( \lambda_{h,1} - \lambda_{h,2}) \Vert
    \Vert D_h( v_h' \cdot w_h') \Vert\\
    &\kurz \leq c \Vert \lambda_{h,1} - \lambda_{h,2} \Vert_{H^1_0(I)'}
    \Vert v_h \Vert_{H^2(I)^d} \Vert w_h \Vert_{H^2(I)^d}.
  \end{align*}
  Analogously we get for the second and third term
  \begin{align*}
    \vert (\mu_h, (u_{h,1} - u_{h,2}) \cdot w_h)_{h,2} \vert
    &\leq c \Vert \mu_h \Vert_{H^1_0(I)'} \Vert u_{h,1} - u_{h,2} \Vert_{H^2(I)^d}
    \Vert w_h \Vert_{H^2(I)^d}, \\
    \vert (\eta_h, (u_{h,1} - u_{h,2}) \cdot v_h)_{h,2} \vert
    &\leq c \Vert \eta_h \Vert_{H^1_0(I)'} \Vert u_{h,1} - u_{h,2} \Vert_{H^2(I)^d}
    \Vert v_h \Vert_{H^2(I)^d},
  \end{align*}
  which implies the asserted estimate.
\end{proof}

We are now in a position to apply the inverse function theorem.

\begin{theorem}[Error estimate]\label{theorem:error_estimate_stationary}
  Let $(u,\lambda) \in (H^4(I)^d \times H^2(I)) \cap X$ be a solution to $F(u,\lambda) = 0$  such that $a_\lambda$ is coercive on $\ker B_u$ .
  Then for $h > 0$ sufficiently small there exists $(u_h,\lambda_h) \in X_h$
  with $F_h(u_h,\lambda_h) = 0$ and a constant $c > 0$ such that
  \begin{equation*}
    \Vert u - u_h \Vert_{H^2(I)^d} \leq c h^2.
  \end{equation*}
\end{theorem}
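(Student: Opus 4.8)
The plan is to apply the quantitative inverse function theorem (Lemma~\ref{lemma:inverseFunction}) to the map $\widetilde F_h: X_h \to X_h'$ at the point $\widetilde x = (\widetilde u_h, \widetilde\lambda_h) = (\I_{h,3}u, \I_{h,2,0}\lambda)$, with the choice $\varepsilon = c_\varepsilon h^2$ for a suitable constant. All five hypotheses of that lemma have essentially been verified in the preceding lemmas, so the proof will consist of collecting them and checking the smallness condition (5).

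First I would record hypothesis (1): by Lemma~\ref{lem:boundedness} we have $\Vert F_h(\widetilde u_h, \widetilde\lambda_h)\Vert_{X_h'} \leq c_\delta h^2 =: \delta$, where $c_\delta = c_\delta(\Vert u\Vert_{H^4(I)^d}, \Vert\lambda\Vert_{H^2(I)})$. Hypothesis (2), Fréchet differentiability of $\widetilde F_h$ on all of $X_h$ (in particular on any ball), is immediate since $\widetilde F_h$ is a polynomial expression in its arguments; this was already remarked in the text. For hypothesis (3) I would combine Brezzi's theorem (Lemma~\ref{lem:Brezzi}) with the coercivity established in Lemma~\ref{lem:coercivity} (using the assumed coercivity of $a_\lambda$ on $\ker B_u$) and the discrete inf-sup condition of Lemma~\ref{lem:infsup}; together these show that $DF_h(\widetilde u_h, \widetilde\lambda_h) = D\widetilde F_h(\widetilde u_h - u_{D,h}, \widetilde\lambda_h)$ is an isomorphism $X_h \to X_h'$ with $\Vert DF_h(\widetilde u_h,\widetilde\lambda_h)^{-1}\Vert_{L(X_h',X_h)} \leq c_{inv}$, where $c_{inv}$ depends only on $\alpha$, $\beta$, the continuity constants, and $\Vert u\Vert_{H^4(I)^d}$, $\Vert\lambda\Vert_{H^2(I)}$, but crucially \emph{not} on $h$ — this uniformity is what makes the argument work. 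Hypothesis (4), the Lipschitz estimate $\Vert DF_h(x_1) - DF_h(x_2)\Vert_{L(X_h,X_h')} \leq c_L\Vert x_1 - x_2\Vert_{X_h}$ for all $x_1, x_2$, is exactly Lemma~\ref{lem:LipschitzEstimate} with an $h$-independent constant $c_L$.

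It then remains to check hypothesis (5). Choosing $\varepsilon := 2 c_{inv}\delta = 2 c_{inv} c_\delta h^2$ automatically gives $2c_{inv}\delta \leq \varepsilon$ with equality, and the first inequality becomes $c_L c_{inv}\varepsilon = 2 c_L c_{inv}^2 c_\delta h^2 \leq \tfrac12$, which holds once $h$ is small enough, say $h \leq h_0$ with $h_0$ depending only on the fixed constants $c_L, c_{inv}, c_\delta$ (equivalently on $\Vert u\Vert_{H^4(I)^d}$, $\Vert\lambda\Vert_{H^2(I)}$, $\alpha$, $\beta$). One must also shrink $h_0$ if necessary so that the ``$h$ small enough'' requirements internal to Lemmas~\ref{lem:coercivity} and~\ref{lem:infsup} are met. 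Lemma~\ref{lemma:inverseFunction} then yields a unique $x_h = (u_h - u_{D,h}, \lambda_h) \in B_\varepsilon(\widetilde x)$ with $\widetilde F_h(x_h) = 0$, i.e. $F_h(u_h, \lambda_h) = 0$, and in particular $\Vert u_h - \widetilde u_h\Vert_{H^2(I)^d} \leq \Vert x_h - \widetilde x\Vert_{X_h} \leq \varepsilon = c h^2$. Finally, by the triangle inequality and the standard interpolation estimate $\Vert u - \I_{h,3}u\Vert_{H^2(I)^d} \leq c h^2\Vert u\Vert_{H^4(I)^d}$,
\begin{equation*}
  \Vert u - u_h\Vert_{H^2(I)^d} \leq \Vert u - \widetilde u_h\Vert_{H^2(I)^d} + \Vert \widetilde u_h - u_h\Vert_{H^2(I)^d} \leq c h^2,
\end{equation*}
which is the claim. (The same argument simultaneously gives $\Vert\lambda - \lambda_h\Vert_{H^{-1}(I)} \leq c h^2$, combining the ball estimate with $\Vert\lambda - \I_{h,2,0}\lambda\Vert_{H^{-1}(I)} \leq c h^2$.)

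The only genuine subtlety — and the step I would be most careful about — is the $h$-independence of $c_{inv}$ in hypothesis (3): Brezzi's theorem gives an inverse bound in terms of $1/\alpha$, $1/\beta$ and the norms of $a_{\widetilde\lambda_h}$ and $b_{\widetilde u_h}$, so one must confirm that the coercivity constant $\alpha/2$ from Lemma~\ref{lem:coercivity}, the inf-sup constant $\beta$ from Lemma~\ref{lem:infsup}, and the boundedness constants of $a_{\widetilde\lambda_h}$, $b_{\widetilde u_h}$ (which involve $\Vert\widetilde\lambda_h\Vert$ and $\Vert\widetilde u_h'\Vert_{L^\infty}$, both bounded uniformly in $h$ via the interpolation estimates) are all independent of $h$. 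Everything else is bookkeeping of constants depending on $\Vert u\Vert_{H^4(I)^d}$ and $\Vert\lambda\Vert_{H^2(I)}$, and a final choice of $h_0$ small enough to absorb all the ``$h$ small enough'' provisos.
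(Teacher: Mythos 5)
Your proof is correct and follows essentially the same route as the paper: both apply the quantitative inverse function theorem at $(\I_{h,3}u,\I_{h,2,0}\lambda)$, verify hypotheses (1)--(4) by citing Lemmas~\ref{lem:boundedness}, \ref{lem:coercivity}, \ref{lem:infsup}, \ref{lem:LipschitzEstimate} together with Brezzi's theorem, and conclude by the triangle inequality with the $\I_{h,3}$ interpolation estimate. Your version is in fact a bit more explicit than the paper's about checking condition (5) and about the $h$-uniformity of $c_{inv}$, which the paper leaves implicit.
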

\begin{proof}
  Set $\widetilde{u}_h := \I_{h,3} u$ and $\widetilde{\lambda}_h := \I_{h,2,0} \lambda$.
   We show  that the conditions of the inverse function
  theorem of Lemma \ref{lemma:inverseFunction} are satisfied.
  Lemma \ref{lem:boundedness} implies the boundedness of 
  $F_h(\widetilde{u}_h, \widetilde{\lambda}_h)$ with
  $\kappa \leq c h^2$. Lemma \ref{lem:infsup} implies an inf-sup condition on
  $b_{\widetilde{u}_h}$ and Lemma \ref{lem:coercivity} yields the coercivity of
  $a_{\widetilde{\lambda}_h}$ on the kernel of $B_h$. In addition an interpolation
  estimate for the nodal $\mathcal{P}_2$-interpolant and an inverse estimate imply
  the boundedness of $a_{\widetilde{\lambda}_h}$ and $b_{\widetilde{u}_h}$. 
  Thus Brezzi's theorem implies
  the inverse estimate of Lemma \ref{lemma:inverseFunction}. Finally the Lipschitz
  condition follows from Lemma \ref{lem:LipschitzEstimate}. Now the inverse function
  theorem implies the existence of $(u_h, \lambda_h) \in X_h$ with
  $F_h(u_h, \lambda_h) = 0$ and
  \begin{align*}
    \Vert (\widetilde{u}_h - u_h, \widetilde{\lambda}_h - \lambda_h) \Vert_{X_h}
    \leq \varepsilon = c \kappa = c h^2.
  \end{align*}
  With $\Vert \cdot \Vert_{X_h} = \Vert \cdot \Vert_X$ and the interpolation estimate
  \begin{align*}
    \Vert \widetilde{u}_h - u \Vert_{H^2(I)^d} &\leq c h^2  \Vert u \Vert_{H^4(I)^d}
  \end{align*}
  we obtain the asserted estimate.
\end{proof}

\section{Numerical Approximation}\label{section:numerical_approximation}

\subsection{Time stepping scheme}\label{subsection:time_discretization}
In this section we verify the error estimate from Theorem
\ref{theorem:error_estimate_stationary} through numerical experiments.
For this we use the time stepping schemes proposed in \cite{Bartels13} and \cite{BDS25-pre},
which are obtained through time discretization of the $L^2$ gradient flow of the bending energy $E$.\\
We thus first choose a time interval
$[0,T] = \bigcup_{n = 1}^N [t_{n-1}, t_n]$,
with $t_n = n\tau$ and time step size $\tau$. Let $Z^n \in \S^{3,1}(\T_h)^d$ the
calculated approximation of $z_h(t_n)$, where $z_h$ is a solution to the $L^2$ gradient flow of $E$ in $\S^{3,1}(\T_h)$ with respect to the discrete constraint $\I_{h,2}(\vert z_h' \vert^2 - 1) = 0$, given boundary conditions and initial value $Z^0 = \J_{h,3} z_0$. Note that the discrete constraint
$\I_{h,2}(\vert z_h' \vert^2 - 1)= 0$ for the semi-discrete scheme can be
equivalently written as
\begin{align*}
  0 = \I_{h,2}(\vert z_h'(0) \vert^2 - 1), \leer
  0 = \frac{1}{2}\frac{d}{dt} \I_{h,2}(\vert z_h' \vert^2 - 1)
  = \I_{h,2}( \del_t z_h' \cdot z_h').
\end{align*}
We now linearize this constraint with respect to the previous time step and obtain the linearized discrete constraint
\begin{align*}
  0 = \I_{h,2}(\vert (Z^0)' \vert^2 - 1), \leer
  0 = \I_{h,2}((d_t Z^{n+1})' \cdot (Z^n)')
\end{align*}
for all $n \in \{0,...,N-1\}$
where $d_t$ is the backward difference quotient.
By also replacing the time derivative in the semi-discrete scheme with the 
backward difference quotient we obtain the fully discrete scheme:

\begin{algorithm}\label{equation:discrete_scheme}
  Choose $z_0 \in H^2(I)^d$ with $\vert z_0' \vert^2 = 1$, $z_0(a) = u_D(a)$ and $z_0' = u_D'$ on $\del I$.
  \begin{enumerate}
    \item Set $n = 0$ and
    $$ Z^0 := \J_{h,3} z_0 = z_0(a) + \int_a^x \I_{h,2} z_0' \ d\sigma. $$
    \item Find $d_t Z^{n+1} \in \G_h(Z^n) := \{Y \in \S^{3,1}_D(\T_h)^d~:~\I_{h,2}(Y' \cdot (Z^n)') = 0\}$ such that
    \begin{equation*}
      (d_t Z^{n+1}, Y) + \tau((d_t Z^{n+1})'', Y'') = - ((Z^n)'', Y'')
    \end{equation*}
    for all $Y \in \G_h(Z^n)$ and set $Z^{n+1} = Z^n + \tau d_t Z^{n+1}$.
    \item Stop if $n = N-1$, otherwise increase $n \to n+1$ and continue with $(2)$.
  \end{enumerate}
\end{algorithm}

Since the discretized, linearized
constraint defines a closed subspace of $\S^{3,1}(\T_h)^d$, the existence of 
discrete solutions follows immediately from the Lax-Milgram-Lemma.
Also, as in \cite{Bartels13}, we can incorporate boundary conditions
into the scheme by enforcing
\begin{itemize}
  \item $d_t Z^{n+1} = 0 \text{ on } \Gamma_D $, $ (d_t Z^{n+1})' = 0 \text{ on } 
  \Gamma_D' \kurz$ for fixed/clamped boundary conditions,
  \item $d_t Z^{n+1}(a) = d_t Z^{n+1}(b),\ 
  (d_t Z^{n+1})'(a) = (d_t Z^{n+1})'(b) \kurz $ for periodic boundary conditions.
\end{itemize}
Note that while we have $\I_{h,3}v(b) = v(b)$ for all $v \in C^1(I)^d$, this is in general not true for the interpolant $\J_{h,3}$. Therefore, the approximations $Z^n$ will in general only satisfy the boundary condition $Z^n(b) = Z^0(b)$ and not $Z^n(b) = u_D(b)$. This however only leads to a small error, since according to Lemma \ref{lemma:interpolation_estimate} we have $\vert Z^0(b) - u_D(b) \vert \leq c h^4$.
With $M$ the mass matrix and $S$ the fourth order stiffness matrix we therefore
have to solve in every time step the linear system of equations
\begin{equation}
  \label{equation:implementation scheme}
  \begin{bmatrix}
    M + \tau S & B^T \\ B & 0
  \end{bmatrix}
  \begin{bmatrix}
    d_t Z^{n+1} \\ \Lambda
  \end{bmatrix}
  = \begin{bmatrix}
    - SZ^n \\ 0
  \end{bmatrix},
\end{equation}
where $B$ is a matrix, that encodes the linearized discrete constraint
$\I_{h,2}((d_t Z^{n+1})' \cdot (Z^n)') = 0$ as well as the boundary conditions,
and $\Lambda$ is the unknown discrete Lagrange multiplier.

\begin{remark}
  \label{remark:H2 gradient flow}
  The mass matrix $M$ in $(\ref{equation:implementation scheme})$ can be replaced with the matrix $S$, yielding the new linear system of equations
  \begin{equation}
    \label{equation:implementation h2 scheme}
    \begin{bmatrix}
      (1 + \tau) S & B^T \\ B & 0
    \end{bmatrix}
    \begin{bmatrix}
      d_t Z^{n+1} \\ \Lambda
    \end{bmatrix}
    = \begin{bmatrix}
      - SZ^n \\ 0
    \end{bmatrix}.
  \end{equation}
  For sufficient boundary conditions this provides a discretization of the $H^2_D$ gradient flow of $E$ which in practice converges  faster  towards a minimizer than the $L^2$ gradient flow. If not enough boundary conditions are provided however, the bilinear form given by $S$ is no longer positive definite and the  matrix  in $(\ref{equation:implementation h2 scheme})$ becomes singular. The scheme $(\ref{equation:implementation scheme})$ can be used to approximate the $L^2$ gradient flow of $E$ even in the absence of  essential  boundary conditions.
\end{remark}

\subsection{Numerical experiments}

To experimentally verify the error estimate from Theorem \ref{theorem:error_estimate_stationary}  we choose starting values and boundary conditions for which the (locally) minimizing function is known. Let $u$ denote that local minimizer and $Z^n$ the calculated numerical approximation from scheme $(\ref{equation:discrete_scheme})$. We set $e_h^n := u - Z^n$ and apply the binomial theorem and Lemma \ref{lemma:rhs_interpolation} to obtain
\begin{equation*}
  \label{equation:H2_error_formula}
  \vert e_h^n \vert_{H^2(I)^d}^2
  = \vert u \vert_{H^2(I)^d}^2
  + \vert Z^n \vert_{H^2(I)^d}^2
  - 2 \int_I (\I_{h,3} u)'' \cdot (Z^n)'' \ dx.
\end{equation*}
We then set $\vert e_h \vert_{H^2} := \vert e_h^N \vert_{H^2(I)^d}$.
To also be able to estimate the weaker norms we additionally set $\widetilde e_h^n := \I_{h,3} e_h^n$ and $\vert \widetilde e_h \vert_{H^1} := \vert \widetilde e_h^N \vert_{H^1(I)^d}$, $\Vert \widetilde e_h \Vert_{L^2} := \Vert \widetilde e_h^N \Vert$.
\begin{example}[Semi-clamped circle]\label{experiment:circle}
  We choose $I = [0, 2\pi ]$ and $z_0(x) := (\cos(x), \sin(x))$. 
  Additionally we choose $T = 50$ and boundary conditions $u(0) = (1,0)$, $u'(0) = u'(2\pi) = (0,1)$.  
  Since $z_0$ is a local minimum for the bending energy, it is also the solution $u$ we want to approximate.  
  We now calculate solutions using both, the scheme $(\ref{equation:discrete_scheme})$ with $\P_2$ constraint as well as the scheme with $\P_1$ constraint from \cite{Bartels13}. The results for the $H^2$ approximation error are displayed in Table \ref{table:circle_p3}. We observe linear convergence for the $\P_1$ constraint and quadratic convergence for the $\P_2$ constraint, as predicted by Theorem \ref{theorem:error_estimate_stationary}.
  \ 
  \DTLloaddb[noheader,keys={h,e11,r11,e12,r12,e13,r13,e21,r21,e22,r22,e23,r23}]
  {circle_h2_DB}
  {tables/circle_open_end_h2.csv}  
  \begin{center}\begin{minipage}{.9\linewidth}
      \resizebox{\linewidth}{!}{
        \begin{tabular}{c|cc|cc|cc|cc}\toprule
          \multirow{3}{*}{$h$}
          & \multicolumn{4}{c|}{$\mathcal{P}_1$ constraint}
          & \multicolumn{4}{c}{$\mathcal{P}_2$ constraint} \\
          \cmidrule{2-9}
          & \multicolumn{2}{c|}{$\tau = 1/10$} 
          & \multicolumn{2}{c|}{$\tau = 1/20$}
          & \multicolumn{2}{c|}{$\tau = 1/10$}
          & \multicolumn{2}{c}{$\tau = 1/20$}\\
          & $\vert e_{h} \vert_{H^2}$ & eoc 
          & $\vert e_{h} \vert_{H^2}$ & eoc
          & $\vert e_{h} \vert_{H^2}$ & eoc
          & $\vert e_{h} \vert_{H^2}$ & eoc
          \DTLforeach*{circle_h2_DB}
          {\h=h,\a=e11,\b=r11,\c=e12,\d=r12,\e=e21,\f=r21,\g=e22,\i=r22}{%
            \DTLiffirstrow{\\\cmidrule{1-9}}{\\}%
            \h & \a & \b & \c & \d & \e & \f & \g & \i
          }%
          \\\bottomrule
      \end{tabular}}   
      \vspace{-.33cm}
      \captionof{table}
      {
        \label{table:circle_p3}
        $H^2$ approximation error for the schemes with $\P_1$ and $\P_2$ 
        constraint and starting value $Z^0$ in Example \ref{experiment:circle}.
        The approximation error converges linearly in case of the $\P_1$ constraint and quadratically in case of the $\P_2$ constraint. \\
      } 
  \end{minipage}\end{center}
  According to Corollary \ref{corollary:P1_constraint_minimizer}, the suboptimal
  linear convergence of the solutions to the $\P_1$ scheme stems from
  these solutions being piecewise $\P_2$ instead of $\P_3$.
  To verify this result experimentally we set
  $$
  \widetilde Z^0(x) := z_0(0) + \int_0^x \I_{h,1}(z_0') \ dx \in \S^{2,1}(\T_h)^d
  $$
  and insert it as initial value into the discrete scheme using the $\P_1$ constraint.  By comparing the stationary values with the starting values we observe that $\widetilde Z^0$ is stationary for $E$ under the $\P_1$ constraint, meaning the solution is piecewise quadratic, while $Z^0$ is stationary for $E$ under the $\P_2$ constraint, meaning the solution in this case is piecewise cubic.
\end{example}

\begin{figure}
  \begin{center}
    \begin{minipage}{.3\linewidth}
      \includegraphics[trim={4cm 15mm 5cm 15mm},clip,width=\textwidth]
      {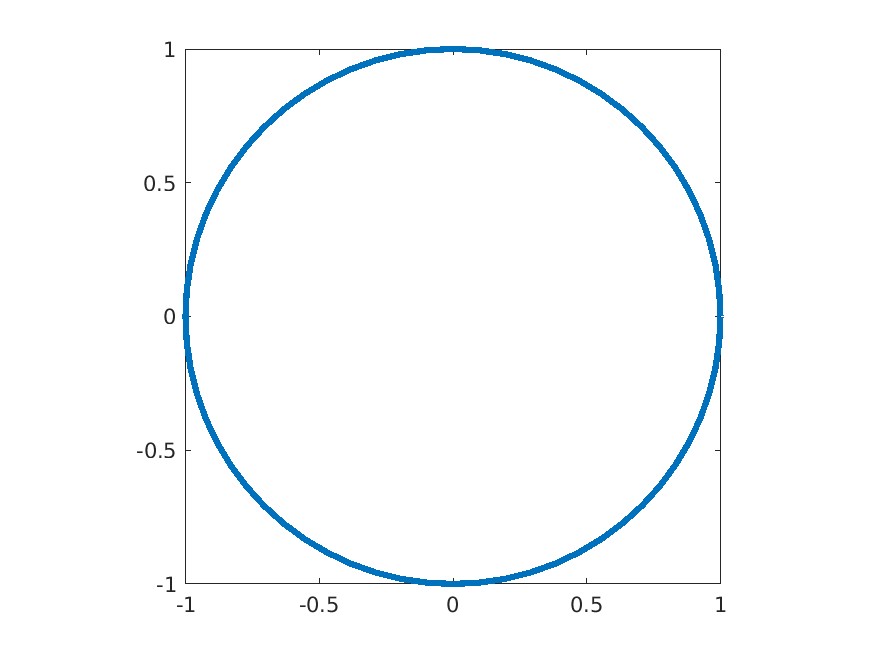}
    \end{minipage}
    \begin{minipage}{.3\linewidth}
      \includegraphics[trim={5cm 1cm 5cm 4cm},clip,width=\textwidth]
      {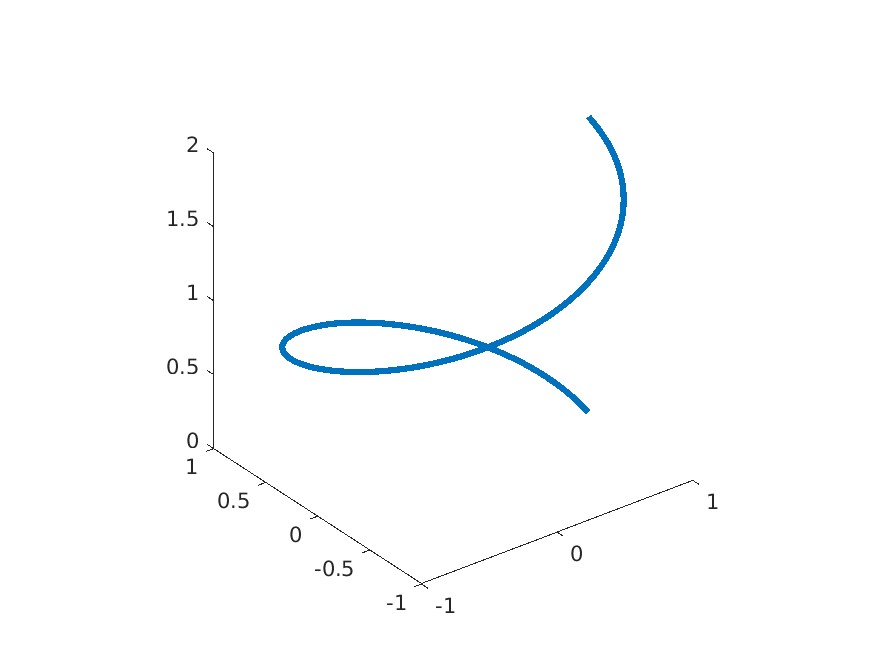}
    \end{minipage}
    \begin{minipage}{.3\linewidth}
      \includegraphics[trim={5cm 1cm 6cm 2cm},clip,width=\textwidth]
      {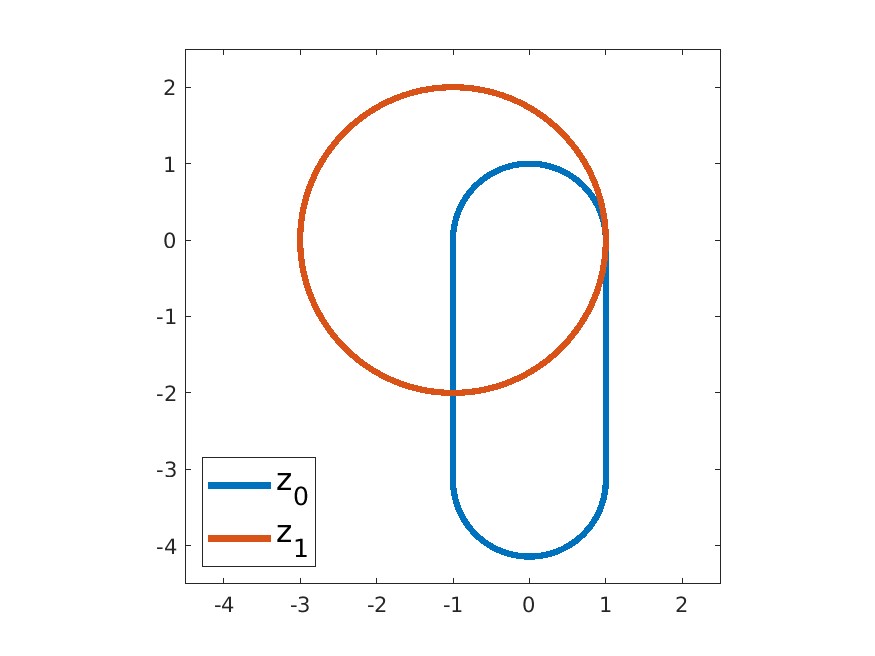}
    \end{minipage}  
    \begin{minipage}{.9\linewidth}
      \caption
      {
        Initial and stationary functions for Example \ref{experiment:circle} (left), Example \ref{experiment:helix} (center) and Example \ref{experiment:oval} (right). 
      }
    \end{minipage}
  \end{center}
\end{figure}

\begin{example}[Clamped helix]\label{experiment:helix}
   We next look at  a curve in three-dimensional space.
  We choose $I = [0, 2\sqrt{\pi^2 + 1}]$, $\lambda = \pi / \sqrt{\pi^2 + 1}$,
  $\mu = 1 / \sqrt{\pi^2 + 1}$ and define $z: I \times [0,50] \to \real^3$, 
  $$
  z_0(x) := (\cos(\lambda x), \sin(\lambda x), \mu x).
  $$ 
  This curve describes
  a helix and for clamped boundary conditions, i.e. $\Gamma_D = \Gamma_D' = 
  \del I$, $z(t)$ is minimal for the 
  bending energy for all $t$ and thus a solution to the elastic flow.
  We again compute $\vert e_h \vert_{H^2}$ and $\widetilde e_h$ in the weaker norms for the schemes with $\mathcal{P}_1$ and $\mathcal{P}_2$ discretization of the constraint. The results are displayed in  Tables \ref{table:helix H2 error} and \ref{table:helix weak norms}.  The results  nearly  coincide with the results that we obtained from Example \ref{experiment:circle}, even though we now used clamped boundary conditions that are not covered by Theorem \ref{theorem:error_estimate_stationary}, suggesting that the error estimate can be extended to cover clamped and fixed boundary conditions as well. In case of the $\P_1$ constraint we observe linear convergence in $H^2$ and quadratic convergence in the weaker norms. In case of the $\P_2$ constraint the rate of convergence is again the same as for the interpolation error and thus quasi-optimal.
  \\
  \DTLloaddb[noheader,keys={h,e11,r11,e12,r12,e13,r13,e21,r21,e22,r22,e23,r23}]
  {helix_h2_DB}
  {tables/helix_h2_combined.csv}  
  \begin{center}\begin{minipage}{.9\linewidth}
      \resizebox{\linewidth}{!}{
        \begin{tabular}{c|cc|cc|cc|cc}\toprule
          \multirow{3}{*}{$h$}
          & \multicolumn{4}{c|}{$\mathcal{P}_1$ constraint}
          & \multicolumn{4}{c}{$\mathcal{P}_2$ constraint} \\
          \cmidrule{2-9}
          & \multicolumn{2}{c|}{$\tau = 1/10$} 
          & \multicolumn{2}{c|}{$\tau = 1/20$}
          & \multicolumn{2}{c|}{$\tau = 1/10$}
          & \multicolumn{2}{c}{$\tau = 1/20$}\\
          & $\vert e_{h} \vert_{H^2}$ & eoc 
          & $\vert e_{h} \vert_{H^2}$ & eoc
          & $\vert e_{h} \vert_{H^2}$ & eoc
          & $\vert e_{h} \vert_{H^2}$ & eoc
          \DTLforeach*{helix_h2_DB}
          {\h=h,\a=e11,\b=r11,\c=e12,\d=r12,\e=e21,\f=r21,\g=e22,\i=r22}{%
            \DTLiffirstrow{\\\cmidrule{1-9}}{\\}%
            \h & \a & \b & \c & \d & \e & \f & \g & \i
          }%
          \\\bottomrule
      \end{tabular}}
      \vspace{-3.3mm}
      \captionof{table}
      {
        \label{table:helix H2 error}
        Approximation error in $H^2(I)^d$ for the $\P_1$ and $\P_2$ constraint and initial value $Z^0$ in Example \ref{experiment:helix}. We observe linear convergence for the $\P_1$ constraint and quadratic convergence for the $\P_2$ constraint. \\
      }
  \end{minipage}\end{center}
  
  \DTLloaddb[noheader,keys={h,e11,r11,e12,r12,e21,r21,e22,r22}]
  {helix_weak_norms_DB}
  {tables/helix_weak_norms_stationary.csv}  
  \begin{center}\begin{minipage}{.9\linewidth}
      \resizebox{\linewidth}{!}{
        \begin{tabular}{c|cc|cc|cc|cc}\toprule
          \multirow{2}{*}{$h$}
          & \multicolumn{4}{c|}{$\mathcal{P}_1$ constraint}
          & \multicolumn{4}{c}{$\mathcal{P}_2$ constraint} \\
          \cmidrule{2-9}
          & $\Vert \widetilde e_{h} \Vert_{L^2}$ & eoc 
          & $\vert \widetilde e_{h} \vert_{H^1}$ & eoc
          & $\Vert \widetilde e_{h} \Vert_{L^2}$ & eoc
          & $\vert \widetilde e_{h} \vert_{H^1}$ & eoc
          \DTLforeach*{helix_weak_norms_DB}
          {\h=h,\a=e11,\b=r11,\c=e12,\d=r12,\e=e21,\f=r21,\g=e22,\i=r22}{%
            \DTLiffirstrow{\\\cmidrule{1-9}}{\\}%
            \h & \a & \b & \c & \d & \e & \f & \g & \i
          }%
          \\ \bottomrule
      \end{tabular}}
      \vspace{-3.3mm}
      \captionof{table}
      {
        \label{table:helix weak norms}
        Approximation error $\widetilde e_h$ in the weaker $L^2$ norm and $H^1$ semi-norm for the $\P_1$ and $\P_2$ constraint, initial value $Z^0$ and $\tau = 1/10$ in Example \ref{experiment:helix}. Quadratic convergence can be observed for the $\P_1$ constraint. For the quadratic constraint however we observe quartic convergence, which together with the interpolation estimates yields quartic and cubic convergence. \\
      }
  \end{minipage}\end{center}
\end{example}

\begin{example}
  [Semi-clamped oval]\label{experiment:oval}
  In this experiment we verify the results from Theorem
  \ref{theorem:error_estimate_stationary} with a non-stationary starting
  value. For this we set $I := [0,4\pi]$ and define $z_0, z_1: I \to \real^2$,
  \begin{align*}
    z_0(x) &:=
    \begin{cases}
      (\cos x, \sin x), \leer &x \in [0,\pi] \\
      (-1,\pi - x), &x \in [\pi,2\pi] \\
      (\cos(x-\pi),\sin(x-\pi) - \pi), &x \in [2\pi, 3\pi] \\
      (1, x - 4\pi), &x \in [3\pi, 4\pi]
    \end{cases}, \\
    z_1(x) &:=
    2 \left(\cos \frac{x}{2} - \frac{1}{2}, \sin \frac{x}{2} \right).
  \end{align*}
  We note that $z_1$ is a local minimizer of the bending energy under
  boundary conditions $z(0) = (1,0),\ z'(0) = (0,1) = z'(4\pi)$ and
  therefore a solution to the bending problem and with $z_0$ as starting
  value for the continuous $L^2$ gradient flow $z$ we get 
  $z(t) \xrightarrow{t \to \infty} z_1$. We again compute the $H^2$, $H^1$ and $L^2$ approximation errors for the $\P_1$ and $\P_2$ constraint. The results are displayed in Tables \ref{table:oval H2 error} and \ref{table:oval weak norms}. Again the observed results are consistent with the previous two examples. The convergence rate for the $\P_2$ constraint is quasi-optimal while for the $\P_1$ constraint we only observe linear convergence in $H^2$ and quadratic convergence in $H^1$ and $L^2$.
  
  \DTLloaddb[noheader,keys={h,e11,r11,e12,r12,e13,r13,e21,r21,e22,r22,e23,r23}]
  {oval_DB}
  {tables/oval_l2_T=5000_tau=0.001_combined.csv}  
  \begin{center}\begin{minipage}{.9\linewidth}
      \resizebox{\linewidth}{!}{
        \begin{tabular}{c|cc|cc|cc|cc}\toprule
          \multirow{3}{*}{$h$}
          & \multicolumn{4}{c|}{$\mathcal{P}_1$ constraint}
          & \multicolumn{4}{c}{$\mathcal{P}_2$ constraint} \\
          \cmidrule{2-9}
          & \multicolumn{2}{c|}{$\tau = 1/2000$} 
          & \multicolumn{2}{c|}{$\tau = 1/4000$}
          & \multicolumn{2}{c|}{$\tau = 1/2000$}
          & \multicolumn{2}{c}{$\tau = 1/4000$}\\
          & $\vert e_{h} \vert_{H^2}$ & eoc 
          & $\vert e_{h} \vert_{H^2}$ & eoc
          & $\vert e_{h} \vert_{H^2}$ & eoc
          & $\vert e_{h} \vert_{H^2}$ & eoc
          \DTLforeach*{oval_DB}
          {\h=h,\a=e12,\b=r12,\c=e13,\d=r13,\e=e22,\f=r22,\g=e23,\i=r23}{%
            \DTLiffirstrow{\\\cmidrule{1-9}}{\\}%
            \h & \a & \b & \c & \d & \e & \f & \g & \i
          }%
          \\\bottomrule
      \end{tabular}}
      \vspace{-3.3mm}
      \captionof{table}
      {
        \label{table:oval H2 error}
        Calculated $H^2$ approximation error for the discrete $\P_1$ and $\P_2$ constraint and starting value $Z^0$ in Example \ref{experiment:oval}.
        For the $\P_1$ constraint linear convergence is observed. For the $\P_2$ constraint we observe quadratic convergence. \\
      }
  \end{minipage}\end{center}
  
  \DTLloaddb[noheader,keys={h,e12,r12,e13,r13,e22,r22,e23,r23}]
  {oval_weak_norms_DB}
  {tables/oval_weak_norms_stationary.csv}  
  \begin{center}\begin{minipage}{.9\linewidth}
      \resizebox{\linewidth}{!}{
        \begin{tabular}{c|cc|cc|cc|cc}\toprule
          \multirow{2}{*}{$h$}
          & \multicolumn{4}{c|}{$\mathcal{P}_1$ constraint}
          & \multicolumn{4}{c}{$\mathcal{P}_2$ constraint} \\
          \cmidrule{2-9}
          & $\Vert \widetilde e_{h} \Vert_{L^2}$ & eoc 
          & $\vert \widetilde e_{h} \vert_{H^1}$ & eoc
          & $\Vert \widetilde e_{h} \Vert_{L^2}$ & eoc
          & $\vert \widetilde e_{h} \vert_{H^1}$ & eoc
          \DTLforeach*{oval_weak_norms_DB}
          {\h=h,\a=e12,\b=r12,\c=e13,\d=r13,\e=e22,\f=r22,\g=e23,\i=r23}{%
            \DTLiffirstrow{\\\cmidrule{1-9}}{\\}%
            \h & \a & \b & \c & \d & \e & \f & \g & \i
          }%
          \\\bottomrule
      \end{tabular}}
      \vspace{-3.3mm}
      \captionof{table}
      {
        \label{table:oval weak norms}
        Computed $L^2$ and $H^1$ approximation error for the discrete $\P_1$ and $\P_2$ constraint, time step size $\tau = 1/4000$ and initial value $Z^0$ in Example \ref{experiment:oval}.\\
      }
  \end{minipage}\end{center}
  The choice of $T = 5000$ and $\tau \in \{1/2000, 1/4000\}$ results in
  high computational effort and thus slow performance. It is however
  necessary due to the relatively slow convergence of the $L^2$ gradient
  flow. As mentioned in Remark \ref{remark:H2 gradient flow} this can be improved using the $H^2$ gradient flow instead.  To demonstrate this we run the discrete scheme $(\ref{equation:implementation scheme})$ and the discrete scheme $(\ref{equation:implementation h2 scheme})$ with $T = 50$ and initial value $Z^0$. The results are displayed in Tables \ref{table:oval H2 error H2 flow}. For the $H^2$ gradient flow the $H^2$ approximation error is about the same as above, but at $0.1 \%$ the computational cost. For the $L^2$ gradient flow we do not get a good approximation at all.
  
  \DTLloaddb[noheader,keys={h,e12,r12,e13,r13,e22,r22,e23,r23}]
  {oval_flow_comparison_DB}
  {tables/oval_p2_stationary_flow_comparison.csv}  
  \begin{center}\begin{minipage}{.9\linewidth}
      \resizebox{\linewidth}{!}{
        \begin{tabular}{c|cc|cc|cc|cc}\toprule
          \multirow{3}{*}{$h$}
          & \multicolumn{4}{c|}{$L^2$ gradient flow}
          & \multicolumn{4}{c}{$H^2$ gradient flow} \\
          \cmidrule{2-9}
          & \multicolumn{2}{c|}{$\tau = 1/200$} 
          & \multicolumn{2}{c|}{$\tau = 1/400$}
          & \multicolumn{2}{c|}{$\tau = 1/200$}
          & \multicolumn{2}{c}{$\tau = 1/400$}\\
          & $\vert e_{h} \vert_{H^2}$ & eoc 
          & $\vert e_{h} \vert_{H^2}$ & eoc
          & $\vert e_{h} \vert_{H^2}$ & eoc
          & $\vert e_{h} \vert_{H^2}$ & eoc
          \DTLforeach*{oval_flow_comparison_DB}
          {\h=h,\a=e12,\b=r12,\c=e13,\d=r13,\e=e22,\f=r22,\g=e23,\i=r23}{%
            \DTLiffirstrow{\\\cmidrule{1-9}}{\\}%
            \h & \a & \b & \c & \d & \e & \f & \g & \i
          }%
          \\\bottomrule
      \end{tabular}}
      \vspace{-3.3mm}
      \captionof{table}
      {
        \label{table:oval H2 error H2 flow}
        Calculated $H^2$ approximation error for the discrete $L^2$ and $H^2$ gradient flow with $\P_2$ constraint and starting value $Z^0$ in Example~\ref{experiment:oval}. \\
      }
  \end{minipage}\end{center}
\end{example}

\renewcommand{\thesection}{A}
\section{Appendix}

\begin{lemma}[Interpolation estimate]\label{lemma:interpolation_estimate}
  Let $I = \bigcup_{i = 1}^M [x_{i-1}, x_i]$ a decomposition of an interval $I$ with
  $\vert x_i - x_{i-1} \vert \leq h$ for all $i$. Further let $\I_{h,m}$ be the
  Lagrange interpolation operator of polynomial degree $m \in \{1,2,3\}$. 
  Then $\I_{h,m}$ satisfies
  \begin{equation*}
    \vert u - \I_{h,m} u \vert_{W^{k,p}(I)^d} 
    \leq c h^{r-k} \vert u \vert_{W^{r,p}(I)^d}
  \end{equation*}
  for all $k \in \{0,...,r\}$,
  where $r \in \{\max(1,m-1),...,m+1\}$ arbitrary.
  Further, for $k \geq 1$, the interpolant $\J_{h,3}$ satisfies the same estimate.
  For $k = 0$ the Simpson rule from Lemma \ref{lemma:simpson_rule}
  implies
  \begin{equation*}
    \Vert u - \J_{h,3} u \Vert_{L^\infty(I)}
    \leq c h^4 \vert u \vert_{W^{5,\infty}(I)}.
  \end{equation*}
\end{lemma}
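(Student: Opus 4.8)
The plan is to establish the three assertions in order, each one building on the previous. For the Lagrange interpolants $\I_{h,m}$, $m\in\{1,2,3\}$, I would run the classical affine-equivalence and Bramble--Hilbert argument. On the reference interval $\hat I=[0,1]$ the interpolation operator $\hat\I_m$ reproduces $\P_m$ and is bounded from $W^{r,p}(\hat I)^d$ into $W^{k,p}(\hat I)^d$ as soon as $W^{r,p}(\hat I)\hookrightarrow C^{m-2}(\hat I)$, so that the nodal data --- including the first-derivative values used by the Hermite interpolant $\I_{h,3}$ --- are well defined; this is exactly the reason for the lower bound $r\ge\max(1,m-1)$. The Bramble--Hilbert lemma then gives $\vert\hat u-\hat\I_m\hat u\vert_{W^{k,p}(\hat I)^d}\le c\,\vert\hat u\vert_{W^{r,p}(\hat I)^d}$ for $0\le k\le r\le m+1$, and transforming back to a generic element $I_i$, using the shape-regularity bound $h\le c h_i$, and summing over $i=1,\dots,M$ yields the stated estimate for $\I_{h,m}$.

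For $\J_{h,3}$ and $k\ge 1$ the key observation is that $(\J_{h,3}u)'=\I_{h,2}u'$, which is immediate from $\J_{h,3}u(x)=u(a)+\int_a^x\I_{h,2}u'\,d\sigma$. Hence $\vert u-\J_{h,3}u\vert_{W^{k,p}(I)^d}=\vert u'-\I_{h,2}u'\vert_{W^{k-1,p}(I)^d}$, and applying the estimate for $\I_{h,2}$ just proved, with interpolation order $r-1$, gives $c\,h^{(r-1)-(k-1)}\vert u'\vert_{W^{r-1,p}(I)^d}=c\,h^{r-k}\vert u\vert_{W^{r,p}(I)^d}$, valid for $r\in\{2,3,4\}$ and $1\le k\le r$; this is precisely the $\I_{h,3}$ estimate restricted to $k\ge 1$.

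The only genuinely non-routine point is the $k=0$ bound for $\J_{h,3}$, and here I would exploit that integrating the quadratic interpolant over an element reproduces Simpson's rule. Writing $u(x)-\J_{h,3}u(x)=\int_a^x(u'-\I_{h,2}u')\,d\sigma$ and splitting the integral at the nodes, on every full subinterval $I_i\subset[a,x]$ one has $\int_{I_i}\I_{h,2}u'\,d\sigma=\tfrac{h_i}{6}\bigl(u'(x_{i-1})+4u'(m_i)+u'(x_i)\bigr)$, so $\int_{I_i}(u'-\I_{h,2}u')$ is exactly the Simpson quadrature error for $u'$ on $I_i$, which by Lemma~\ref{lemma:simpson_rule} is bounded by $c\,h_i^5\Vert u^{(5)}\Vert_{L^\infty(I_i)}$; summing gives a contribution $\le c\,h^4\,(b-a)\,\vert u\vert_{W^{5,\infty}(I)}$. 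The single remaining partial subinterval $I_j\ni x$ is handled crudely by $\bigl|\int_{x_{j-1}}^x(u'-\I_{h,2}u')\,d\sigma\bigr|\le h_j\Vert u'-\I_{h,2}u'\Vert_{L^\infty(I_j)}\le c\,h_j^{4}\vert u'\vert_{W^{3,\infty}(I_j)}$, again of order $h^4$. Taking the supremum over $x\in I$ and combining the two contributions gives the asserted bound. The main obstacle is precisely recognizing and using this midpoint-quadrature cancellation: a direct $L^1$ estimate of $u'-\I_{h,2}u'$ only yields $O(h^3)$, one order short both of the claim and of what $\I_{h,3}$ achieves, so the extra power of $h$ comes entirely from the superconvergence of Simpson's rule.
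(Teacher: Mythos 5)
Your argument is essentially the paper's: elementwise Bramble--Hilbert for the Lagrange interpolants (this is what the paper's citation of Brenner--Scott Theorem~4.4.4 with $\P_{r-1}\subset\P_m$ amounts to), the identity $(\J_{h,3}u)'=\I_{h,2}u'$ to reduce the $k\ge 1$ estimates for $\J_{h,3}$ to those for $\I_{h,2}$, and Simpson superconvergence for the $k=0$ bound --- you simply spell out what the paper's one-paragraph proof leaves implicit, and the key cancellation you identify (that $\int_{I_i}\I_{h,2}u'$ reproduces the Simpson quadrature of $u'$) is exactly the mechanism the lemma alludes to. One point to correct at the end: your partial-interval contribution is $c\,h^4\,|u'|_{W^{3,\infty}(I_j)} = c\,h^4\,|u|_{W^{4,\infty}(I)}$, which is \emph{not} controlled by the pure seminorm $|u|_{W^{5,\infty}(I)}$ appearing on the right of the stated $L^\infty$ estimate. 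Indeed, for a nontrivial quartic $u$ one has $|u|_{W^{5,\infty}(I)}=0$ while $u$ is not piecewise cubic, so $u\ne\J_{h,3}u$ and the seminorm-only bound is false. What your decomposition actually establishes is
\begin{equation*}
  \Vert u-\J_{h,3}u\Vert_{L^\infty(I)}\le c\,h^4\bigl(|u|_{W^{4,\infty}(I)}+|u|_{W^{5,\infty}(I)}\bigr),
\end{equation*}
i.e.\ the right-hand side needs the lower-order term (or, equivalently, a full $W^{5,\infty}$ norm); you should state this rather than claim to recover the pure-seminorm bound verbatim. The $O(h^4)$ order, which is what the paper uses downstream, is of course unaffected.
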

\begin{proof}
  The estimate follows from using local estimates on each subinterval and summing
  up over all intervals. The local estimate used is a special case of
  \cite[Theorem 4.4.4]{BS08} that is obtained by using $\P_{r-1} \subset \P_m$.
\end{proof}

\begin{lemma}\label{lemma:norm_equivalence}
  There exists a constant $c > 0$ that is independent of $h$ such that for all $v_h \in \S^{2,0}(\T_h)^d$ we have
  $$
  c^{-1} \Vert v_h \Vert^2 
  \leq \Vert v_h \Vert_{h,2}^2 
  \leq \Vert \I_{h,2}(\vert v_h \vert^2) \Vert_{L^1(I)}
  \leq c \Vert v_h \Vert^2.
  $$
\end{lemma}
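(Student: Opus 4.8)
The plan is to reduce the claim to an equivalence of norms on a single reference element and then rescale. First I would record that, by the definition of the lumped product, $\Vert v_h \Vert_{h,2}^2 = (v_h,v_h)_{h,2} = \int_I \I_{h,2}(\vert v_h\vert^2)\,dx$, so the middle inequality $\Vert v_h\Vert_{h,2}^2 \le \Vert \I_{h,2}(\vert v_h\vert^2)\Vert_{L^1(I)}$ is nothing but the pointwise bound $a \le \vert a\vert$ integrated over $I$. It then remains to prove the two outer inequalities, and since all three quantities decompose as sums over the subintervals $I_i$ — the restriction $\I_{h,2}(\vert v_h\vert^2)\vert_{I_i}$ being determined by $v_h\vert_{I_i}$ alone — it suffices to establish them elementwise with an $h$-independent constant and then sum over $i = 1,\dots,M$.

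Next, for fixed $i$ I would pass to the affine reference map $\Phi_i\colon \hat I := [0,1] \to I_i$, $\hat x \mapsto x_{i-1} + h_i\hat x$, which carries the reference nodes $\{0,\tfrac12,1\}$ onto $\{x_{i-1},m_i,x_i\}$, and set $q := v_h\circ\Phi_i \in (\P_2)^d$. Because Lagrange interpolation at corresponding nodes commutes with affine pullback, $\I_{h,2}(\vert v_h\vert^2)\circ\Phi_i = \widehat\I(\vert q\vert^2)$, where $\widehat\I$ denotes quadratic interpolation on $\hat I$ at $0,\tfrac12,1$. A change of variables then turns $\int_{I_i}\vert v_h\vert^2\,dx$, $\int_{I_i}\I_{h,2}(\vert v_h\vert^2)\,dx$ and $\int_{I_i}\vert\I_{h,2}(\vert v_h\vert^2)\vert\,dx$ into $h_i$ times the fixed quantities $N_1(q) := \int_{\hat I}\vert q\vert^2$, $N_2(q) := \int_{\hat I}\widehat\I(\vert q\vert^2)$ and $N_3(q) := \int_{\hat I}\vert\widehat\I(\vert q\vert^2)\vert$, respectively, each of which depends on $q$ alone.

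The core of the argument is then a finite-dimensional compactness statement on $(\P_2)^d$. By exactness of Simpson's rule for quadratics (Lemma \ref{lemma:simpson_rule}) one has $N_2(q) = \tfrac16(\vert q(0)\vert^2 + 4\vert q(\tfrac12)\vert^2 + \vert q(1)\vert^2)$, which shows $N_2 \ge 0$ — so that $\Vert\cdot\Vert_{h,2}$ is well defined — and, since a quadratic vanishing at three distinct points vanishes identically, that $N_2(q) = 0$ forces $q \equiv 0$. Hence $\sqrt{N_1}$ and $\sqrt{N_2}$ are norms and $N_1,N_2,N_3$ are continuous and homogeneous of degree two on a finite-dimensional space, so the ratios $N_2/N_1$, $N_1/N_2$ and $N_3/N_1$ are bounded on $(\P_2)^d\setminus\{0\}$ by a common constant $\hat c$ (the maximum of their suprema over the unit sphere $\{N_1 = 1\}$). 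Multiplying these elementwise bounds by $h_i$, summing over $i$, and recalling $\Vert v_h\Vert_{h,2}^2 = \int_I\I_{h,2}(\vert v_h\vert^2)\,dx$ yields the asserted chain with $c = \hat c$.

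I expect the only delicate point to be the lower bound $c^{-1}\Vert v_h\Vert^2 \le \Vert v_h\Vert_{h,2}^2$, i.e. the positive definiteness of $N_2$: this is precisely where the midpoint node enters, since the three Lagrange nodes of $\S^{2,0}(\T_h)$ on a single element are exactly enough to pin down a quadratic. Everything else — that $\I_{h,2}$ commutes with the affine pullback, and that the constant is $h$-independent because it originates from one reference element (and, incidentally, no mesh quasi-uniformity is used) — is routine bookkeeping.
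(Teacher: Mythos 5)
Your proposal is correct and follows essentially the same route as the paper: reduce to a single reference element by affine pullback, identify the lumped integral via Simpson's rule to obtain positive definiteness (this is exactly where the midpoint node is essential), and then invoke equivalence of the resulting homogeneous degree-two functionals on the finite-dimensional space $\mathcal P_2^d$ before rescaling by $h_i$ and summing. The only cosmetic difference is that the paper cites a separate stability estimate for $\widehat{\mathcal I}$ from \cite{BS08} to handle the last inequality, whereas you fold all three bounds into one compactness argument on $\{N_1=1\}$; both work, and your observation that no quasi-uniformity is used is accurate.
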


\begin{proof}
  To show the first estimate we note that $(\cdot,\cdot)_{h,2}$ is a scalar product on $\S^{2,0}(\T_h)^d$. It is easy to see that $(\cdot, \cdot)_{h,2}$ is symmetric and bilinear. The positive definiteness follows immediately from the Simpson rule:
  \begin{align*}
    \Vert v_h \Vert_{h,2}^2 
    = \int_{i=1}^M \int_{I_i} \I_{h,2}(\vert v_h \vert^2) \ dx
    = \int_{i=1}^M \frac{h_i}{6} (\vert v_h(x_{i-1}) \vert^2 + 4 \vert v_h(m_i) \vert^2 + \vert v_h(x_i) \vert^2) > 0
  \end{align*}
  for $v_h \in \S^{2,0}(\T_h)^d \setminus \{0\}$. Therefore $\Vert \cdot \Vert_{h,2}$ defines a norm on $\S^{2,0}(\T_h)^d$ and since $\S^{2,0}(\T_h)^d$ has finite dimension, all norms on $\S^{2,0}(\T_h)^d$ are equivalent. Let now $\I_2: C^0([0,1])^d \to \P_2^d$ denote the standard $\P_2$ interpolant on $[0,1]$. We define $\phi_i: [0,1] \to I_i$, $\phi_i(t) := x_{i-1} + t h_i$. Elementwise transformation onto $[0,1]$ then yields
  \begin{align*}
    \Vert v_h \vert^2 = \sum_{i=1}^M \int_{I_i} \vert v_h \vert^2 \ dx
    = \sum_{i=1}^M h_i \int_0^1 \vert v_h \circ \phi_i \vert^2 \ dx
    = \sum_{i=1}^M h_i \Vert v_h \circ \phi_i \Vert_{L^2([0,1])^d}^2.
  \end{align*}
  Since by construction we have $v_h \circ \phi_i \in \P_2^d$ we can now use the norm equivalence on $\P_2^d$ to obtain a constant $c > 0$ that is independent of $h$ such that
  \begin{align*}
    \Vert v_h \circ \phi_i \Vert_{L^2([0,1])^d}^2
    \leq c \int_0^1 \I_2(\vert v_h \circ \phi_i \vert^2) \ dx.
  \end{align*}
  Another application of the transformation theorem then yields
  \begin{align*}
    \Vert v_h \Vert^2
    \leq \sum_{i=1}^M c h_i \int_0^1 \I_2(\vert v_h \circ \phi_i \vert^2) \ dx
    = c \sum_{i=1}^M \int_{I_i} \I_{h,2}(\vert v_h \vert^2) \ dx
    = c \Vert v_h \Vert_{h,2}^2,
  \end{align*}
  which proves the first estimate.
  The second estimate is trivial. For the third estimate we again use the transformation theorem as well as the stability estimate from \cite[Lemma 4.4.1]{BS08} and basic norm equivalences to obtain:
  \begin{align*}
    \Vert \I_{h,2}(v_h^2) \Vert_{L^1(I)}
    &= \sum_{i=1}^M h_i \int_0^1 \vert \I_2 (v_h \circ \phi_i)^2 \vert \ dx
    \leq c \sum_{i=1}^M h_i \Vert v_h \circ \phi_i \Vert_{L^\infty([0,1])^d}^2 \\
    &\leq c \sum_{i=1}^M h_i \Vert v_h \circ \phi_i \Vert_{L^2([0,1])^d}^2
    = c \Vert v_h \Vert^2,
  \end{align*}
  which finishes the proof.
\end{proof}

\begin{lemma}\label{lemma:L1_estimate}
  Let $u \in C^0(I), v \in C^0(I)^d$ with $\vert v(z) \vert = 1$ for all
  $z \in \N_2(\T_h)$.
  Then we have
  $$
  \Vert \I_{h,2}(uv) \Vert_{L^1(I)^d} \leq c \Vert \I_{h,2}(u) \Vert_{L^1(I)}.
  $$
\end{lemma}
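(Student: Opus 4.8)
The plan is to localize to a single subinterval and transfer everything to a fixed reference element, where the estimate reduces to the equivalence of norms on the finite-dimensional space $\P_2^d$. Fix $I_i \in \T_h$ and recall the affine bijection $\phi_i : [0,1] \to I_i$, $\phi_i(t) = x_{i-1} + t h_i$, together with the standard quadratic Lagrange interpolant $\I_2 : C^0([0,1])^d \to \P_2^d$ at the nodes $0, 1/2, 1$. Since $\I_{h,2}$ is defined by interpolation at $\N_2(\T_h) \supset \{x_{i-1}, m_i, x_i\}$, a change of variables gives $(\I_{h,2}w)\vert_{I_i} \circ \phi_i = \I_2(w \circ \phi_i)$ for any $w \in C^0(I)^d$; in particular the nodal values of $\I_2((uv)\circ\phi_i)$ at $0, 1/2, 1$ are $u(x_{i-1})v(x_{i-1})$, $u(m_i)v(m_i)$, $u(x_i)v(x_i)$, while those of $\I_2(u\circ\phi_i)$ are $u(x_{i-1})$, $u(m_i)$, $u(x_i)$.

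Because the three interpolation points are unisolvent for $\P_2$, the map $p \mapsto |p(0)| + |p(1/2)| + |p(1)|$ is a norm on $\P_2^d$ (and on scalar $\P_2$), so on these fixed finite-dimensional spaces there is an $h$-independent constant $c$ with
\begin{equation*}
  c^{-1}\big(|p(0)| + |p(1/2)| + |p(1)|\big) \leq \Vert p \Vert_{L^1([0,1])^d} \leq c\big(|p(0)| + |p(1/2)| + |p(1)|\big).
\end{equation*}
Now I would apply the upper bound to $p = \I_2((uv)\circ\phi_i)$ and the lower bound to $p = \I_2(u\circ\phi_i)$. The hypothesis $\vert v(z)\vert = 1$ for all $z \in \N_2(\T_h)$, used at the nodes $x_{i-1}, m_i, x_i$ of $I_i$, gives $|u(z)v(z)| = |u(z)|$, so the two triples of nodal values agree in absolute value and hence
\begin{equation*}
  \Vert \I_2((uv)\circ\phi_i) \Vert_{L^1([0,1])^d} \leq c\!\!\sum_{z \in \{x_{i-1},m_i,x_i\}}\!\! |u(z)| \leq c\, \Vert \I_2(u\circ\phi_i) \Vert_{L^1([0,1])}.
\end{equation*}

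Finally, multiplying through by $h_i$ and using $\Vert w \Vert_{L^1(I_i)} = h_i \Vert w\circ\phi_i \Vert_{L^1([0,1])}$ in both directions turns this into $\Vert \I_{h,2}(uv) \Vert_{L^1(I_i)^d} \leq c\, \Vert \I_{h,2}(u) \Vert_{L^1(I_i)}$, and summing over $i = 1, \dots, M$ gives the assertion. I do not expect a genuine obstacle here: the only point deserving a sentence of care is that the constant in the reference-element norm equivalence is independent of $h$, which holds precisely because the whole argument takes place on the fixed element $[0,1]$ — exactly the mechanism already used in the proof of Lemma~\ref{lemma:norm_equivalence}.
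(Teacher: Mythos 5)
Your proposal is correct and follows essentially the same route as the paper: transfer each subinterval to the reference element, invoke equivalence on $\P_2$ between the $L^1$ norm and a norm built from nodal values, and use $|v(z)|=1$ at the nodes to identify the two triples of nodal magnitudes. The only cosmetic difference is that you use the sum of nodal values where the paper uses the maximum; these are equivalent norms on $\mathbb{R}^3$, so the argument is unchanged.
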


\begin{proof}
  We first set $I_0 = [0,1]$ and for $p \in \P_2(I_0)$ we set
  \begin{align*}
    \Vert p \Vert_{\max} := \max \{\vert p(0) \vert, \vert p(1/2) \vert, \vert p(1) \vert \}.
  \end{align*}
  It is easy to see that $\Vert \cdot \Vert_{\max}$ and $\Vert \cdot \Vert_{L^1(I)}$ define norms on $\P_2(I_0)$, and since $\P_2(I_0)$ has finite dimension, those two norms are equivalent. Now we define $\Phi_i: I_0 \to I_i$ by $\Phi_i(t) := (1-t)x_{i-1} + t x_i$. The transformation theorem then yields for all $p \in \P_2(I_i)$
  \begin{align*}
    \Vert p \Vert_{L^1(I_i)}
    = h_i \Vert p \circ \Phi_i \Vert_{L^1(I_0)}
    \cong h_i \max \{\vert p(x_{i-1}) \vert, \vert p(m_i) \vert, \vert p(x_i) \vert \}.
  \end{align*}
  We therefore obtain
  \begin{align*}
    \Vert \I_{h,2}(uv) \Vert_{L^1(I)^d}
    &= \sum_{i=1}^M \Vert \I_{h,2}(uv) \Vert_{L^1(I_i)^d}
    \cong \sum_{i=1}^M \max \{\vert (uv)(x_{i-1}) \vert, \vert (uv)(m_i) \vert, \vert (uv)(x_i) \vert \} \\
    &= \sum_{i=1}^M \max \{\vert u(x_{i-1}) \vert, \vert u(m_i) \vert, \vert u(x_i) \vert \}
    \cong \Vert \I_{h,2} u \Vert_{L^1(I)},
  \end{align*}
  which finishes the proof.
\end{proof}

\begin{lemma}[Simpson rule]\label{lemma:simpson_rule}
  On each subinterval $I_i = [x_{i-1}, x_i]$ the nodal interpolant $\I_{h,2}$ satisfies 
  \begin{equation*}
    \left\vert \int_{I_i} f \ dx - \int_{I_i} \I_{h,2} f \ dx \right\vert
    \leq C h_i^5 \max_{x \in I_i} \vert D_h^4 f(x) \vert.
  \end{equation*}
  Further we have
  \begin{equation*}
    \left\vert \int_{I} f \ dx - \int_{I} \I_{h,2} f \ dx \right\vert
    \leq \sum_{i=1}^M C h_i^5 \max_{x \in I_i} \vert D_h^4 f(x) \vert
    \leq C h^4 \Vert D_h^4 f \Vert_{L^\infty(I)}.
  \end{equation*}
\end{lemma}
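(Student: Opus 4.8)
The plan is to reduce the estimate to the classical Simpson quadrature error on a reference interval and then scale. First I would observe that on each subinterval $I_i$ the quantity $\int_{I_i} \I_{h,2} f \ dx$ is precisely the Simpson value $\frac{h_i}{6}\big(f(x_{i-1}) + 4 f(m_i) + f(x_i)\big)$: writing $\I_{h,2} f|_{I_i}$ in terms of the three local quadratic Lagrange basis functions associated with $x_{i-1}$, $m_i$, $x_i$ and integrating produces exactly the weights $h_i/6$, $4h_i/6$, $h_i/6$ (this is also how the value is recorded in the proof of the norm equivalence lemma). Consequently the quadrature error functional $E_i(f) := \int_{I_i} f \ dx - \int_{I_i} \I_{h,2} f \ dx$ is linear in $f$ and annihilates $\P_3(I_i)$, since Simpson's rule is exact for cubic polynomials (a direct computation, reflecting the symmetry of the node $m_i$).

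Next I would transform to the reference interval $\hat I = [0,1]$ via the affine map $\phi_i(t) = x_{i-1} + t h_i$. With $\hat f := f \circ \phi_i$ the change of variables gives $E_i(f) = h_i \, \hat E(\hat f)$, where $\hat E(\hat g) := \int_0^1 \hat g \ dt - \frac{1}{6}\big(\hat g(0) + 4 \hat g(1/2) + \hat g(1)\big)$, and moreover $\hat f^{(4)} = h_i^4 (f^{(4)} \circ \phi_i)$, so that $\Vert \hat f^{(4)} \Vert_{L^\infty(0,1)} = h_i^4 \max_{x \in I_i} \vert D_h^4 f(x)\vert$. On the fixed interval $\hat I$ the functional $\hat E$ is bounded and linear on $W^{4,\infty}(0,1)$ and vanishes on $\P_3$; by the Bramble--Hilbert lemma, or equivalently by representing $\hat E(\hat g) = \int_0^1 K(t)\, \hat g^{(4)}(t)\ dt$ with the bounded Peano kernel $K$, there is an absolute constant $C$ with $\vert \hat E(\hat g)\vert \leq C \Vert \hat g^{(4)}\Vert_{L^\infty(0,1)}$. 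Combining the two identities yields $\vert E_i(f)\vert = h_i \vert \hat E(\hat f)\vert \leq C h_i \Vert \hat f^{(4)}\Vert_{L^\infty(0,1)} = C h_i^5 \max_{x\in I_i}\vert D_h^4 f(x)\vert$, which is the first claimed bound.

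For the global estimate I would sum over the subintervals: $\big\vert \int_I f \ dx - \int_I \I_{h,2} f \ dx \big\vert \leq \sum_{i=1}^M \vert E_i(f)\vert \leq \sum_{i=1}^M C h_i^5 \max_{x\in I_i}\vert D_h^4 f(x)\vert$, and then use $h_i^5 \leq h^4 h_i$ together with $\sum_i h_i = b-a$ and $\max_{I_i}\vert D_h^4 f\vert \leq \Vert D_h^4 f\Vert_{L^\infty(I)}$ to bound the sum by $C h^4 \Vert D_h^4 f\Vert_{L^\infty(I)}$, the constant now carrying the factor $b-a$.

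The argument is essentially classical and I do not anticipate a genuine obstacle. The two points requiring care are the identification of $\int_{I_i}\I_{h,2} f$ with the Simpson weights (so that $E_i$ really is the Simpson error functional and hence kills $\P_3$), and making the reference-interval bound quantitative in the $L^\infty$-based norm rather than the usual $L^2$ one; both are handled cleanly by the Peano kernel representation, which also makes transparent that only elementwise $W^{4,\infty}$-regularity of $f$ is used, consistent with the way the lemma is later applied.
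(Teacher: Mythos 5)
Your argument is correct and is essentially the classical Peano-kernel/scaling derivation of the Simpson error bound, which is precisely what the paper delegates to the cited reference \cite[Section 3.1]{SB02} rather than proving in-line. Identifying $\int_{I_i}\I_{h,2}f$ with the Simpson weights, noting exactness on $\P_3$, scaling to the reference interval, and applying Bramble--Hilbert (or the explicit Peano kernel) before summing is the standard route and fills in cleanly what the paper leaves to the literature.
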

\begin{proof}
  A proof can be found in \cite[Section 3.1]{SB02}
\end{proof}

\begin{lemma}\label{lemma:rhs_interpolation}
  Let $f \in H^2(I)^d$. Then we have
  \begin{equation*}
    \int_I v_h'' \cdot f'' \ dx = \int_I v_h'' \cdot 
    (\I_{h,3} f)'' \ dx
  \end{equation*}
  for all $v_h \in \S^{3,1}(\T_h)$.
\end{lemma}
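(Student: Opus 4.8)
The plan is to reduce everything to the difference $g := f - \I_{h,3}f$: it suffices to show $\int_I v_h'' \cdot g'' \, dx = 0$ for every $v_h \in \S^{3,1}(\T_h)$, and then conclude by linearity of the integrand in $f$. Before doing so I would record that the statement is meaningful, i.e.\ that $\I_{h,3}f$ is defined for $f$ only in $H^2(I)^d$: in one space dimension $H^2(I)^d \embeds C^1(\overline I)^d$, so $f \in C^1(I)^d$, and in addition $g|_{I_i} \in H^2(I_i)^d \cap C^1(\overline{I_i})^d$ on each element. The defining identities of $\I_{h,3}$ then give the two facts I will use: $g(x_i) = 0$ and $g'(x_i) = 0$ at every node $x_i \in \N_1(\T_h)$.

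The computation itself I would carry out elementwise. On a fixed $I_i = [x_{i-1}, x_i]$ the function $v_h$ is a polynomial of degree at most three, hence $v_h''|_{I_i}$ is affine and $v_h'''|_{I_i}$ is a constant vector; this is the structural point that makes the argument work. Since $v_h''|_{I_i}$ is smooth and $g'|_{I_i} \in H^1(I_i)^d$, integration by parts on $I_i$ is legitimate and yields
\begin{equation*}
  \int_{I_i} v_h'' \cdot g'' \, dx
  = \big[ v_h'' \cdot g' \big]_{x_{i-1}}^{x_i}
  - v_h'''|_{I_i} \cdot \big( g(x_i) - g(x_{i-1}) \big).
\end{equation*}
Both terms on the right vanish thanks to the nodal identities for $g$ and $g'$ recorded above, so $\int_{I_i} v_h'' \cdot g'' \, dx = 0$; summing over $i = 1, \ldots, M$ finishes the proof.

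I do not expect a genuine obstacle here. The only two points deserving a sentence are the well-definedness of the cubic interpolant on $H^2$ data (the Sobolev embedding) and the observation that $v_h'''$ is constant per element, which is precisely what collapses the interior integral to nodal values of $g$. Note in particular that no continuity of $v_h''$ across the nodes is exploited, so the identity holds on the entire space $\S^{3,1}(\T_h)$ rather than on some smaller $C^2$ subspace.
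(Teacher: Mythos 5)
Your proposal is correct and matches the paper's proof in all essentials: reduce to $g = f - \I_{h,3}f$, integrate by parts elementwise, and kill the boundary and interior terms using the nodal interpolation identities and the fact that $v_h$ is piecewise cubic. The only cosmetic difference is that you perform a single integration by parts and then evaluate $\int_{I_i} v_h''' \cdot g'\,dx$ by the fundamental theorem (using that $v_h'''$ is elementwise constant), whereas the paper integrates by parts a second time and then observes $v_h^{(4)} \equiv 0$ — the same observation in a slightly different guise.
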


\begin{proof}
  Let $f \in H^2(I)^d$ and $v_h \in \S^{3,1}(\T_h)$ arbitrary. We have
  \begin{align*}
    \int_I v_h'' \cdot f'' \ dx
    - \int_I v_h'' \cdot (\I_{h,3} f)'' \ dx
    = \int_I v_h'' \cdot (f - \I_{h,3} f)'' \ dx.
  \end{align*}
  Elementwise integration by parts and the fundamental theorem of calculus yield
  \begin{align*}
    \int_{I_i} v_h'' \cdot (f - \I_{h,3} f)'' \ dx
    &= [v_h'' \cdot (f - \I_{h,3} f)' ]_{x_{i-1}}^{x_i}
    - [v_h''' \cdot (f - \I_{h,3} f) ]_{x_{i-1}}^{x_i} \\
    &\kurz + \int_{I_i} v_h^{(4)} \cdot (f - \I_{h,3} f) \ dx.
  \end{align*}
  Now the first summand vanishes since $(\I_{h,3} f)'(x_i) = f(x_i) \ \forall i$.
  Analogously the second summand vanishes since
  $(\I_{h,3} f)(x_i) = f(x_i) \ \forall i$.
  Lastly the integral term also vanishes, since $v_h$ is elementwise $\P_3$ and therefore $D_h^4 v_h \equiv 0$. Now summation over all subintervals finishes the proof.
\end{proof}

\begin{lemma}[Inverse Estimates]\label{lemma:inverse_estimates}
  Let $I$ be an interval and $v \in \P_m,\ m \in \mathbb{N}$.
  We then have the estimates
  \begin{align*}
    \Vert v \Vert_{L^\infty(I)} 
    &\leq c\vert I \vert^{-\frac{1}{2}} \Vert v \Vert_{L^2(I)}, \\
    \vert v \vert_{H^1(I)} &\leq c\vert I \vert^{-1} \Vert v \Vert_{L^2(I)}.
  \end{align*}
  Further with the embedding $\P_2 \hookrightarrow H^1(I)'$, $v \mapsto (w \mapsto (v,w)_{L^2(I)})$ we get
  \begin{align*}
    \Vert v \Vert_{L^2(I)} &\leq c \vert I \vert^{-1} \Vert v \Vert_{H^1(I)'}.
  \end{align*}
\end{lemma}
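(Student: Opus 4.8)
The plan is to transplant each of the three inequalities to the fixed reference interval $I_0 := [0,1]$ by an affine substitution, and then exploit that on a finite-dimensional polynomial space all norms are equivalent, with constants that no longer depend on the geometry of $I$.

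First I would write $I = [a,b]$, $|I| = b-a$, introduce $\Phi\colon I_0 \to I$, $\Phi(t) = a + t|I|$, and for $v \in \P_m(I)$ set $\hat v := v\circ\Phi \in \P_m(I_0)$. The substitution $dx = |I|\,dt$ together with the chain rule $v'\circ\Phi = |I|^{-1}\hat v'$ gives the scaling identities $\Vert v\Vert_{L^\infty(I)} = \Vert\hat v\Vert_{L^\infty(I_0)}$, $\Vert v\Vert_{L^2(I)}^2 = |I|\,\Vert\hat v\Vert_{L^2(I_0)}^2$ and $|v|_{H^1(I)}^2 = |I|^{-1}|\hat v|_{H^1(I_0)}^2$. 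Since $\P_m(I_0)$ is finite dimensional, both $\Vert\cdot\Vert_{L^\infty(I_0)}$ and $|\cdot|_{H^1(I_0)}$ are dominated by $\Vert\cdot\Vert_{L^2(I_0)}$ with a constant depending only on $m$; inserting these bounds into the identities above and collecting the powers of $|I|$ immediately yields the first two estimates.

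For the third estimate I would argue by duality. Given $w\in H^1(I_0)$ with $\Vert w\Vert_{H^1(I_0)}\le 1$, the transplant $\tilde w := w\circ\Phi^{-1}\in H^1(I)$ satisfies $(v,\tilde w)_{L^2(I)} = |I|\,(\hat v,w)_{L^2(I_0)}$ and, by the same identities, $\Vert\tilde w\Vert_{H^1(I)}^2 \le \max(|I|,|I|^{-1})$. Taking the supremum over such $w$ bounds $\Vert v\Vert_{H^1(I)'}$ from below by $|I|\,\min(|I|^{1/2},|I|^{-1/2})\,\Vert\hat v\Vert_{H^1(I_0)'}$. On $\P_2(I_0)$ the map $\hat v\mapsto\Vert\hat v\Vert_{H^1(I_0)'}$ is a norm (it can vanish only if $(\hat v,\hat v)_{L^2(I_0)}=0$, using $\hat v\in\P_2\subset H^1(I_0)$ as a test function), hence equivalent to $\Vert\cdot\Vert_{L^2(I_0)}$; combining this equivalence with $\Vert v\Vert_{L^2(I)} = |I|^{1/2}\Vert\hat v\Vert_{L^2(I_0)}$ and the displayed lower bound gives the claim, once one observes that for $|I|$ bounded above by a fixed constant (which holds for any mesh cell, where $|I|=h_i\le b-a$) the factor $\min(|I|^{1/2},|I|^{-1/2})$ is bounded below by a multiple of $|I|^{1/2}$.

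The main obstacle is precisely this third part: because $\Vert\cdot\Vert_{H^1}$ is not homogeneous under dilation, its dual norm scales like $\max(|I|,|I|^{-1})$ rather than a single power of $|I|$, so one has to keep track of which of the two summands in $\Vert w\Vert_{H^1(I)}^2$ is dominant and cannot avoid restricting to $|I|$ bounded; this restriction is harmless in all applications, where $I$ ranges over the cells of a fixed underlying interval. The first two parts are routine scaling arguments with no real difficulty.
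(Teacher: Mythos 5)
Your proof of the first two estimates is the same scaling-to-reference-interval argument the paper uses, and it is correct. For the third estimate, however, you take a genuinely different and considerably heavier route than the paper. You transplant the dual norm itself to the reference interval, track the anisotropic scaling $\Vert\tilde w\Vert_{H^1(I)}^2 \le \max(|I|,|I|^{-1})$, and invoke norm equivalence of $\Vert\cdot\Vert_{H^1(I_0)'}$ and $\Vert\cdot\Vert_{L^2(I_0)}$ on $\P_2(I_0)$. The paper instead uses a short bootstrap: viewing $v$ simultaneously as an element of $L^2(I)$, $H^1(I)$ and $H^1(I)'$, it writes $\Vert v\Vert_{L^2(I)}^2 = v[v] \le \Vert v\Vert_{H^1(I)'}\,\Vert v\Vert_{H^1(I)}$, and then controls $\Vert v\Vert_{H^1(I)}$ by $\Vert v\Vert_{L^2(I)}$ via the \emph{second} inverse estimate it just proved, after which one divides by $\Vert v\Vert_{L^2(I)}$. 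Your argument is correct and self-contained, but the paper's two-line bootstrap is shorter, avoids having to reason about how the $H^1$ dual norm scales under dilation, and reuses the previously established estimate. Both arguments share the same hidden hypothesis that $|I|$ is bounded above (the paper needs it to absorb the $\Vert v\Vert_{L^2}$ term of the full $H^1$ norm into the $|I|^{-1}\Vert v\Vert_{L^2}$ bound on the seminorm; you need it to bound $\min(|I|^{1/2},|I|^{-1/2})$ from below by a multiple of $|I|^{1/2}$), and you are right that this is harmless for mesh cells. One presentational remark: it would be cleaner to state the bounded-$|I|$ restriction up front rather than discovering it at the end, since the lemma as written does not mention it.
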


\begin{proof}
  The first and second inequality follow directly from transformation onto the reference interval $[0,1]$ and using the equivalence of the two norms involved in finite dimensional vector spaces. For the third inequality we estimate
  \begin{align*}
    \Vert v \Vert_{L^2(I)}^2 = (v,v)_{L^2(I)} = v[v]
    \leq \Vert v \Vert_{H^1(I)'} \Vert v \Vert_{H^1(I)}.
  \end{align*}
  The asserted estimate then follows immediately from applying the second estimate.
\end{proof}

\begin{lemma}[Gagliardo--Nirenberg inequality]\label{lemma:gagliardo_nirenberg}
  Let $I = (a,b) \subset \real$ and $u \in H^2(I)$. Then we have
  \begin{equation*}
    \Vert u' \Vert_{L^2(I)}
    \leq C \Vert u \Vert_{H^2(I)}^\frac{1}{2} \Vert u \Vert_{L^2(I)}^\frac{1}{2}
    + C \Vert u \Vert_{L^2(I)}.
  \end{equation*}
\end{lemma}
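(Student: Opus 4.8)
The plan is to reduce the claim to the cleaner interpolation inequality
\begin{equation*}
  \Vert u' \Vert_{L^2(I)}^2 \leq C \big( \Vert u \Vert_{L^2(I)} \Vert u'' \Vert_{L^2(I)} + \Vert u \Vert_{L^2(I)}^2 \big),
\end{equation*}
from which the assertion follows at once: since $\Vert u'' \Vert_{L^2(I)} \leq \Vert u \Vert_{H^2(I)}$ and $\sqrt{\alpha + \beta} \leq \sqrt{\alpha} + \sqrt{\beta}$ for $\alpha,\beta \geq 0$, taking square roots gives $\Vert u' \Vert_{L^2(I)} \leq \sqrt{C}\, \Vert u \Vert_{H^2(I)}^{1/2} \Vert u \Vert_{L^2(I)}^{1/2} + \sqrt{C}\, \Vert u \Vert_{L^2(I)}$.

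To prove the reduced inequality I would first recall that $H^2(I) \embeds C^1(\overline{I})$ in one space dimension, so that $u(a), u(b), u'(a), u'(b)$ are meaningful and, by density of smooth functions, integration by parts yields
\begin{equation*}
  \Vert u' \Vert_{L^2(I)}^2 = u(b) u'(b) - u(a) u'(a) - \int_I u\, u'' \ dx .
\end{equation*}
The last term is bounded by $\Vert u \Vert_{L^2(I)} \Vert u'' \Vert_{L^2(I)}$ via the Cauchy--Schwarz inequality. For the boundary terms I would use the elementary estimate $\Vert v \Vert_{L^\infty(I)}^2 \leq c\,(\vert I \vert^{-1} \Vert v \Vert_{L^2(I)}^2 + \Vert v \Vert_{L^2(I)} \Vert v' \Vert_{L^2(I)})$ for $v \in H^1(I)$, proved by picking $c_0 \in I$ with $\vert v(c_0) \vert^2 \leq \vert I \vert^{-1} \Vert v \Vert_{L^2(I)}^2$ and integrating $(v^2)' = 2 v v'$ from $c_0$. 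Applying this to $v = u$ and $v = u'$, taking square roots and using subadditivity of the square root, $\vert u(b) u'(b) \vert \leq \Vert u \Vert_{L^\infty(I)} \Vert u' \Vert_{L^\infty(I)}$ (and likewise at $a$) is dominated by a sum of terms of the form $\Vert u \Vert_{L^2(I)}^{p_0} \Vert u' \Vert_{L^2(I)}^{p_1} \Vert u'' \Vert_{L^2(I)}^{p_2}$ in which the exponent $p_1$ is always at most $3/2$, hence strictly below $2$.

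Finally I would apply Young's inequality term by term, with exponents adapted to each value of $p_1$, to absorb every contribution containing $\Vert u' \Vert_{L^2(I)}$ into $\tfrac12 \Vert u' \Vert_{L^2(I)}^2$ on the left-hand side; the remaining terms are, up to constants depending only on $\vert I \vert$, multiples of $\Vert u \Vert_{L^2(I)} \Vert u'' \Vert_{L^2(I)}$ and $\Vert u \Vert_{L^2(I)}^2$ (using $\alpha^{4/3}\beta^{2/3} \leq \alpha^2 + \alpha\beta$ once more where needed), which gives the reduced inequality. The only delicate point is precisely the boundary term: on $\real$, or under boundary conditions forcing $[u u']_a^b$ to vanish, the estimate is immediate, whereas here this contribution is genuinely present, and a crude bound $\vert u(b)u'(b) \vert \leq \Vert u \Vert_{L^\infty(I)} \Vert u' \Vert_{L^\infty(I)}$ combined with $\Vert u' \Vert_{L^\infty(I)}^2 \lesssim \vert I \vert^{-1}\Vert u' \Vert_{L^2(I)}^2 + \Vert u' \Vert_{L^2(I)}\Vert u'' \Vert_{L^2(I)}$ reproduces $\Vert u' \Vert_{L^2(I)}^2$ with a coefficient one cannot absorb. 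Using the product form $\Vert v \Vert_{L^2(I)}\Vert v' \Vert_{L^2(I)}$ in the $L^\infty$ bound rather than $\Vert v' \Vert_{L^2(I)}^2$, and splitting the square root before invoking Young's inequality, is what keeps the power of $\Vert u' \Vert_{L^2(I)}$ below $2$ throughout and makes the absorption possible.
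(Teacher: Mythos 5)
Your proof is correct and complete, but it takes a genuinely different route: the paper disposes of this lemma with a one-line citation to \cite[Theorem 1.1]{LZ22}, whereas you give a self-contained elementary argument. The reduction to $\Vert u' \Vert_{L^2}^2 \leq C(\Vert u \Vert_{L^2} \Vert u'' \Vert_{L^2} + \Vert u \Vert_{L^2}^2)$ and the use of $\sqrt{\alpha+\beta}\leq\sqrt{\alpha}+\sqrt{\beta}$ to recover the stated form are both sound. Your integration-by-parts identity, the Cauchy--Schwarz bound on $\int_I u\,u''$, and the one-dimensional $L^\infty$-bound $\Vert v \Vert_{L^\infty(I)}^2 \leq c\bigl(|I|^{-1}\Vert v\Vert_{L^2}^2 + \Vert v\Vert_{L^2}\Vert v'\Vert_{L^2}\bigr)$ are all standard and correctly invoked; in particular, you identify the genuine technical obstacle here---that $[uu']_a^b$ does not vanish for general $u\in H^2(I)$---and you handle it correctly by retaining the product form $\Vert v\Vert_{L^2}\Vert v'\Vert_{L^2}$ and splitting via subadditivity of the square root \emph{before} multiplying the two $L^\infty$ bounds. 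This guarantees the four resulting monomials carry $\Vert u'\Vert_{L^2}$ to powers $1, 1/2, 3/2, 1$, all strictly below $2$, so each can be absorbed into $\tfrac12\Vert u'\Vert_{L^2}^2$ by Young's inequality; the residual $\Vert u\Vert_{L^2}^{4/3}\Vert u''\Vert_{L^2}^{2/3}$-type term from the exponent-$(4,4/3)$ application is correctly reduced via one further Young step. Compared with the paper's citation, your approach is longer but self-contained, makes the dependence of the constant on $|I|$ explicit, and correctly signals that the result is \emph{not} simply the boundary-free Gagliardo--Nirenberg inequality in disguise.
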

\begin{proof}
  A proof can be found in \cite[Theorem 1.1]{LZ22}.
\end{proof}

\renewcommand{\thesection}{B}
\section{Inverse function theorem}
\label{appendix:inverse Function}

\begin{lemma}[Inverse function theorem]
  Let $F: X \to X'$ for a Banach space X and assume there exist $\tilde{x} \in 
  X$, $c_L, c_{inv}, \delta, \varepsilon > 0$ such that
  \begin{enumerate}
    \item $\Vert F(\tilde{x}) \Vert_{X'} \leq \delta$,
    \item $F$ is Fréchet differentiable in $B_\varepsilon(\tilde{x})$,
    \item $\Vert DF(\tilde{x})^{-1} \Vert_{L(X',X)} \leq c_{inv}$,
    \item For all $x_1, x_2 \in B_\varepsilon(\tilde x): 
    \Vert DF(x_1) - DF(x_2) \Vert_{L(X,X')}
    \leq c_L \Vert x_1 - x_2 \Vert_X$,
    \item $c_L c_{inv} \varepsilon \leq \frac{1}{2}, \ 
    2 c_{inv} \delta \leq \varepsilon $.
  \end{enumerate}
  Then there exists a unique $x \in B_\varepsilon(\tilde{x})$ with 
  $F(x) = 0$.
\end{lemma}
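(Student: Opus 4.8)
The plan is to recast the equation $F(x)=0$ as a fixed-point problem for the Newton-type map
$$
  \Phi(x) := x - DF(\tilde x)^{-1} F(x),
$$
which is well defined on $B_\varepsilon(\tilde x)$ by assumptions (2) and (3), and to show that $\Phi$ maps the closed ball $\overline{B_\varepsilon(\tilde x)}$ into itself and is a $\tfrac12$-contraction there. Banach's fixed-point theorem then furnishes a unique fixed point, and since $DF(\tilde x)^{-1}$ is injective, $\Phi(x)=x$ is equivalent to $F(x)=0$; the quantitative estimates will moreover place the fixed point in the open ball, giving the asserted uniqueness of the zero in $B_\varepsilon(\tilde x)$.

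For the self-mapping property I would write, for $x\in\overline{B_\varepsilon(\tilde x)}$,
$$
  \Phi(x)-\tilde x = -DF(\tilde x)^{-1}\big(F(x)-F(\tilde x)-DF(\tilde x)(x-\tilde x)\big) - DF(\tilde x)^{-1}F(\tilde x).
$$
Using the fundamental theorem of calculus for the Banach-space-valued map $t\mapsto F(\tilde x+t(x-\tilde x))$, which is continuously differentiable on $[0,1]$ by (2) and the Lipschitz bound (4), the first bracket equals $\int_0^1\big(DF(\tilde x+t(x-\tilde x))-DF(\tilde x)\big)(x-\tilde x)\,dt$, whose norm is at most $\tfrac{c_L}{2}\Vert x-\tilde x\Vert_X^2\le\tfrac{c_L}{2}\varepsilon^2$ by (4). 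Combining with (1), (3) and the two conditions in (5) gives $\Vert\Phi(x)-\tilde x\Vert_X\le c_{inv}\big(\tfrac{c_L}{2}\varepsilon^2+\delta\big)\le\tfrac{\varepsilon}{4}+\tfrac{\varepsilon}{2}=\tfrac{3\varepsilon}{4}<\varepsilon$.

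For the contraction property, for $x_1,x_2\in\overline{B_\varepsilon(\tilde x)}$ the same algebraic identity yields
$$
  \Phi(x_1)-\Phi(x_2) = -DF(\tilde x)^{-1}\int_0^1\big(DF(x_2+t(x_1-x_2))-DF(\tilde x)\big)(x_1-x_2)\,dt,
$$
and since the convex combination $x_2+t(x_1-x_2)$ lies in $\overline{B_\varepsilon(\tilde x)}$ for $t\in[0,1]$, assumption (4) bounds the integrand in $L(X,X')$-norm by $c_L\varepsilon\Vert x_1-x_2\Vert_X$; with (3) and $c_Lc_{inv}\varepsilon\le\tfrac12$ this gives $\Vert\Phi(x_1)-\Phi(x_2)\Vert_X\le\tfrac12\Vert x_1-x_2\Vert_X$. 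Hence $\Phi$ has a unique fixed point $x\in\overline{B_\varepsilon(\tilde x)}$, the self-map estimate forces $\Vert x-\tilde x\Vert_X\le\tfrac{3\varepsilon}{4}$ so that $x\in B_\varepsilon(\tilde x)$, and uniqueness of the zero in $B_\varepsilon(\tilde x)$ follows from uniqueness of the fixed point in $\overline{B_\varepsilon(\tilde x)}$.

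The argument is essentially standard; the only points demanding care are the bookkeeping of the constants so that the two smallness conditions in (5) deliver precisely the self-map-into-the-ball and $\tfrac12$-contraction bounds, together with the elementary justification of the integral mean-value representation of $F(x)-F(\tilde x)$ in the Banach-valued setting, which relies only on the continuity of $t\mapsto DF(\tilde x+t(x-\tilde x))$ guaranteed by (4). I do not expect a genuine obstacle beyond this routine bookkeeping.
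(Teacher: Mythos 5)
Your proof is correct and is essentially the same Newton--Kantorovich fixed-point argument as the paper's: your map $\Phi(x)=x-DF(\tilde x)^{-1}F(x)$ is exactly the paper's operator $T$ written in unshifted coordinates ($T(\varrho)=\Phi(\tilde x+\varrho)-\tilde x$), and the contraction estimate via the Lipschitz bound on $DF$ is identical. The only cosmetic difference is in the self-mapping estimate, where you bound the Taylor remainder directly by $\tfrac{c_L}{2}\varepsilon^2$ via the integral mean-value form, whereas the paper reuses the $\tfrac12$-contraction bound applied to $\varrho$ and $0$; both yield the required inclusion.
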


\begin{proof}
  We set $\widetilde y := F(\widetilde x)$. For $\varrho \in B_\varepsilon(0) \subset X$ we also define the remainder term
  \begin{align*}
    \R(\widetilde x, \varrho) := F(\widetilde x + \varrho) - F(\widetilde x) - DF(\widetilde x) \varrho
    = F(\widetilde x + \varrho) - \widetilde y - DF(\widetilde x) \varrho
  \end{align*}
  and the operator $T: B_\varepsilon(0) \to X$,
  \begin{align*}
    T(\varrho) := -DF(\widetilde x)^{-1}(\widetilde y + \R(\widetilde x,\varrho)).
  \end{align*}
  This now implies that
  \begin{align*}
    F(\widetilde x + \varrho) = 0
    \ \Leftrightarrow \  
    \R(\widetilde x, \varrho) + \widetilde y = - DF(\widetilde x) \varrho
    \ \Leftrightarrow \ 
    \varrho = -DF(\widetilde x)^{-1}(\R(\widetilde x,\varrho) + \widetilde y)
    = T(\varrho).
  \end{align*}
  So to prove the Lemma it remains to show that $T(\varrho) \in B_\varepsilon(0)$ for all $\varrho \in B_\varepsilon(0)$ and that $T$ is a contraction on $B_\varepsilon(0)$. We start with the contraction property. Let $\varrho, \varrho' \in B_\varepsilon(0)$ be chosen arbitrary. We then have that
  \begin{align*}
    \R(\widetilde x, \varrho) - \R(\widetilde x, \varrho')
    &= F(\widetilde x + \varrho) - F(\widetilde x + \varrho')
    - DF(\widetilde x)(\varrho - \varrho') \\
    &= \int_0^1 \frac{d}{dt} F(\widetilde x + (1-t) \varrho' + t \varrho) \ dt
    - DF(\widetilde x)(\varrho - \varrho') \\
    &= \int_0^1 (DF(\widetilde x + t \varrho + (1-t) \varrho') - DF(\widetilde x))(\varrho - \varrho') \ dt.
  \end{align*}
  This implies
  \begin{align*}
    \Vert \R(\widetilde x, \varrho) - \R(\widetilde x, \varrho') \Vert_{X'}
    &\leq \int_0^1 \Vert DF(\widetilde x + t\varrho + (1-t) \varrho') - DF(\widetilde x) \Vert_{L(X,X')} \Vert \varrho - \varrho' \Vert_X \ dt \\
    &\leq c_L \Vert t\varrho + (1-t) \varrho' \Vert_X \Vert \varrho - \varrho' \Vert_X \leq c_L \varepsilon \Vert \varrho - \varrho' \Vert_X.
  \end{align*}
  From this therefore we obtain
  \begin{align*}
    \Vert T(\varrho) - T(\varrho') \Vert_X
    &\leq \Vert DF(\widetilde x)^{-1} \Vert_{L(X',X)}
    \Vert \R(\widetilde x, \varrho) - \R(\widetilde x, \varrho') \Vert_{X'}
    \leq c_{inv} c_L \varepsilon \Vert \varrho - \varrho' \Vert_X.
  \end{align*}
  By assumption we have that $c_{inv}c_L \varepsilon \leq 1/2$ and thus $T$ is indeed a contraction. We further have for all $\varrho \in B_\varepsilon(0)$
  \begin{align*}
    \Vert T(\varrho) \Vert_X
    &= \Vert T(\varrho) - T(0) \Vert_X + \Vert T(0) \Vert_X
    \leq \frac{1}{2} \Vert \rho \Vert_X + \Vert DF(\widetilde x)^{-1} F(\widetilde x) \Vert_X \\
    &< \frac{\varepsilon}{2} + c_{inv} \Vert F(\widetilde x) \Vert_{X'}
    \leq \frac{\varepsilon}{2} + c_{inv} \kappa \leq \varepsilon,
  \end{align*}
  therefore we have $T(\varrho) \in B_\varepsilon(0)$. Now the Banach fixed-point theorem implies the existence of $\varrho \in B_\varepsilon(0)$ with $T(\varrho) = \varrho$ and thus $x = \widetilde x + \varrho$ is the unique solution to $F(x) = 0$ in $B_\varepsilon(\widetilde x)$.
\end{proof}

\begin{acknowledgement}
  Financial support by the German Research Foundation (DFG) via
  research unit FOR 3013 
  \textit{Vector- and tensor-valued surface PDEs} (417223351)
  is gratefully acknowledged.
\end{acknowledgement}

\printbibliography

\end{document}